\pgfplotsset{compat=1.14}
\theoremstyle{plain}
\newtheorem{theorem}{Theorem}[section]
\newtheorem{lemma}[theorem]{Lemma}
\newtheorem{proposition}[theorem]{Proposition}
\newtheorem{corollary}[theorem]{Corollary}
\newtheorem{conjecture}[theorem]{Conjecture}
\theoremstyle{definition}
\newtheorem{definition}[theorem]{Definition}
\newtheorem{remark}[theorem]{Remark}
\newcommand{\eps}{\varepsilon}
\newcommand{\RR}{\mathbb{R}}
\newcommand{\FF}{\mathbb{F}}
\newcommand{\ZZ}{\mathbb{Z}}
\DeclareMathOperator{\dist}{dist}
\newcommand{\dbtilde}[1]{\accentset{\approx}{#1}}
\begin{document}

\title{New estimates on the size of $(\alpha,2\alpha)$-Furstenberg sets}
\date{}
\author{Daniel Di Benedetto \hspace{1.5cm} Joshua Zahl}

\maketitle

\begin{abstract}
We use recent advances on the discretized sum-product problem to obtain new bounds on the Hausdorff dimension of planar $(\alpha,2\alpha)$-Fursterberg sets. This provides a quantitative improvement to the $2\alpha+\epsilon$ bound of H\'era-Shmerkin-Yavicoli. In particular, we show that every $1/2$-Furstenberg set has dimension at least $1 + 1/4536$.  
\end{abstract}

\section{Introduction}
In \cite{w3}, Wolff defined a class of Besicovitch-type sets, inspired by the work of Furstenberg \cite{Fur}, which he called Furstenberg sets. 
\begin{definition}\label{defnFurstenberg}
For $0<\alpha\leq 1$, we say that a compact set $E\subset\RR^2$ is a (planar) \emph{$\alpha$-Furstenberg set} if for every direction $\omega\in S^1$, there is a line $\ell\subset\RR^2$ with direction $\omega$ and $\dim_H(E\cap \ell)\geq\alpha$. 
\end{definition}
Here and throughout, $\dim_H$ denotes Hausdorff dimension. In particular, every Besicovitch set in the plane is also a $1$-Furstenberg set. Wolff considered the problem of estimating 
\[
\gamma(\alpha) = \inf\{ \dim_H(E) : E \text{ is an } \alpha\text{-Furstenberg set}\},
\]
and showed that 
	\begin{equation}\label{wfurst}
	\max\Big\{2\alpha,\alpha+\frac{1}{2}\Big\}\leq \gamma(\alpha)  \leq \frac{3\alpha}{2} + \frac{1}{2}.
	\end{equation}
Wolff conjectured that the upper bound is sharp. When $\alpha=1/2$, the two lower bound estimates coincide, and we have $1\leq\gamma(1/2)\leq 5/4$. 

In the influential paper \cite{kt}, Katz and Tao connected the problem of estimating $\gamma(1/2)$ to the Falconer distance conjecture, and also to a discretized variant of the Erd\H{o}s-Szemer\'edi sum-product conjecture, which they called the discretized ring conjecture. This latter conjecture involves objects called $(\delta,\alpha)_n$-sets, which are discretized analogues of Borel subsets of $\RR^n$ that have Hausdorff dimension $\alpha$. To define these sets precisely, we introduce the parameters $n,\alpha, \eps, C, \delta$, which are chosen in that order. 
\begin{definition}
We say a set $E\subset\RR^n$ is a \emph{$(\delta,\alpha)_n$-set} if $E$ is a union of balls of radius $\delta$, and for each ball $B$ of radius $r\geq \delta$, we have $|E\cap B|\leq C\delta^{n-\eps}(r/\delta)^\alpha$.
\end{definition}
The discretized ring conjecture \cite[Conjecture 1.14]{kt} says that a $(\delta,1/2)_1$-set must expand substantially under addition and/or multiplication. 
\begin{conjecture}[Katz-Tao]\label{discRingConj}
There exists an absolute constant $c_1$, so that if $\eps>0$ is chosen sufficiently small and $C$ sufficiently large, the following holds for all sufficiently small $\delta>0$. Let $E\subset[1,2]$ be a $(\delta,1/2)_1$-set, with $|E|\geq \delta^{1/2+\eps}$. Then 
\[
|E+E| + |E.E| \geq \delta^{-c_1}|E|.
\]
\end{conjecture}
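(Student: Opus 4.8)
\medskip
\noindent\textbf{Proof proposal.}
This is the case $\sigma=1/2$ of what is now called the discretized sum--product theorem, so the plan is to describe the architecture one would follow to prove that statement. I would argue by contradiction: fix a small absolute constant $c_1$ (to be pinned down at the end) and suppose $E\subset[1,2]$ is a $(\delta,1/2)_1$-set with $|E|\ge\delta^{1/2+\eps}$ yet $|E+E|+|E.E|\le\delta^{-c_1}|E|$, so that $E$ has both small doubling and small product set; note that every subset of $E$ of measure $\ge\delta^{\eps}|E|$ inherits these properties with only slightly worse constants.

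The first step is to pass to a clean combinatorial setting. Using a discretized Pl\"unnecke--Ruzsa calculus — measuring every set by its $\delta$-covering number, with Pl\"unnecke's inequality in the Petridis form controlling iterated sumsets and the Ruzsa triangle inequality controlling the mixed terms — together with repeated pigeonholing, I would produce a refinement $R\subseteq E$ with $|R|\ge\delta^{O(c_1)}|E|\approx\delta^{1/2}$ for which every sum--product expression of bounded complexity (in particular $R\cdot R+R\cdot R$ and $(R+R)\cdot R$) has measure $\le\delta^{-O(c_1)}|R|$; informally, $R$ looks like a piece of an approximate subring of $\RR$ at scale $\delta$. Crucially $R$, being a subset of $E$, still satisfies the $(\delta,1/2)_1$-set non-concentration bound, with the same constant $C$.

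The second and decisive step is to show that such an approximate subring of \emph{intermediate} size cannot exist: I claim $|R\cdot R+R\cdot R|\ge\delta^{-c_2}|R|$ for a gain $c_2$ much larger than the $O(c_1)$ losses of the first step, which gives the desired contradiction once $c_1$ is chosen small. The engine is a discretized Szemer\'edi--Trotter / incidence estimate in the style of Bourgain--Katz--Tao (or the sharper projection and sum--product theorems developed since): for each $a\in R$ the dilation $x\mapsto ax$ sends $R$ into $O(1)$ copies of $R\cdot R$, so the $\approx|R|^{2}$ points of $R\times(R\cdot R)$ have $\gtrsim|R|^{2}$ incidences with the $\approx|R|$ lines $\{y=ax:a\in R\}$, and the incidence bound — which genuinely uses that $R$ is non-concentrated, i.e.\ a $(\delta,1/2)$-set — forces one of $|R+R|$, $|R\cdot R|$ to be polynomially larger than $|R|$, hence, via the first step, forces expansion of the sum--product expression above. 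The non-concentration hypothesis is not a technicality here: without it $R$ could be the interval $[1,1+\delta^{1/2}]$, for which $R+R$ and $R\cdot R$ are both $\approx R$, and this is exactly the obstruction any such theorem must rule out.

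The hard part is the second step with good constants. The qualitative statement — that $\RR$ has no approximate subrings of intermediate dimension — is Bourgain's ring theorem and is the real content; obtaining a \emph{polynomial} gain $\delta^{-c_1}$, and moreover with $c_1$ an \emph{absolute} constant rather than one that degrades as $\eps\to0$, requires the sharp discretized incidence input and a careful iteration, and then threading all the $\delta^{-\eps}$ and $\delta^{-O(c_1)}$ losses through the two steps and choosing $c_1$ small relative to $c_2$ (with $\eps$, and hence $C$ and $\delta$, chosen afterward) is the delicate bookkeeping that finishes the argument.
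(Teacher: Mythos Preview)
The paper does not itself prove this statement: it is stated as a conjecture, attributed to Bourgain (Theorem~\ref{bourgainThm}), and quantified by Guth--Katz--Zahl (Theorem~\ref{GKZTheorem}), all by citation. The only related argument the paper actually carries out is in Appendix~\ref{modifiedGKZAppendix}, which sketches the GKZ proof of the variant Proposition~\ref{modifiedGKZ}. So there is no ``paper's own proof'' of Conjecture~\ref{discRingConj} to compare against directly.

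That said, your outline has the right overall architecture for Bourgain's approach --- reduce via Pl\"unnecke--Ruzsa to an approximate subring, then rule out approximate subrings of intermediate size using non-concentration --- and you correctly isolate the second step as the real content. The gap is that your sketch of that second step is circular: you propose a ``discretized Szemer\'edi--Trotter / incidence estimate'' as the engine, but no such estimate is available as an independent black box. The continuous Szemer\'edi--Trotter theorem says nothing useful about $\delta$-tubes without non-concentration hypotheses, and the discretized incidence bound with a genuine gain for $(\delta,\alpha)$-sets of points and lines is essentially equivalent to --- and in practice deduced \emph{from} --- the discretized sum--product theorem you are trying to prove (this is also the logical order in the finite-field BKT paper you invoke: sum--product first, incidences second). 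Bourgain's actual argument does not go through incidences; it is an iterative multi-scale scheme on sums of dilates that directly exploits the $(\delta,1/2)$ non-concentration. The GKZ argument that the paper \emph{does} develop is different again: it pigeonholes on multiplicative energy, proves direct estimates on $\mathcal{E}_\delta(d_1A+d_2A)$ for $d_1,d_2\in A-A$, and then runs a gap/dense dichotomy on the quotient set $B=\{(a_1-a_2)/(a_3-a_4)\}$; it is this hands-on route, not an appeal to an incidence bound, that yields the explicit constants the paper uses.
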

Katz and Tao \cite{kt} proved that a positive answer to Conjecture \ref{discRingConj} would imply that $\gamma(1/2)\geq 1+c_2$, for some (small) $c_2>0.$ Shortly thereafter, Bourgain \cite{bour2} established Conjecture \ref{discRingConj}.
\begin{theorem}[Bourgain]\label{bourgainThm}
Conjecture \ref{discRingConj} holds for some $c_1>0$.
\end{theorem}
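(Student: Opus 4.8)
The plan is to prove the contrapositive flavour of the statement by a multiscale argument: a $(\delta,1/2)_1$-set $E\subset[1,2]$ with $|E|\geq\delta^{1/2+\eps}$ cannot grow slowly under both addition and multiplication simultaneously. This is, in essence, Bourgain's discretized sum--product estimate at the exponent $1/2$, so I describe the architecture rather than every estimate. Suppose toward a contradiction that $|E+E|+|E\cdot E|\leq\delta^{-c_1}|E|$, where $c_1$ is a small absolute constant to be pinned down at the end. \textbf{Step 1 (Ruzsa calculus).} Using the discretized Pl\"unnecke--Ruzsa inequalities (covering-number analogues of the classical sum-set inequalities, including their mixed-sumset forms), the slow-growth hypothesis propagates: after passing to a subset of $E$ of comparable size, every set built from $E$ by a bounded number of additions, subtractions, multiplications and divisions --- in particular $E+E\cdot E$ and $(E-E)/(E-E)$ --- has measure at most $\delta^{-Kc_1}|E|$, for an absolute constant $K$. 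A further affine--multiplicative normalization lets us assume in addition that $E$ has no sub-interval denser or sparser than the $(\delta,1/2)_1$ condition permits.

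\textbf{Step 2 (Multiscale regularization).} Partition the scale range $[\delta,1]$ into $\sim\log(1/\delta)$ dyadic blocks. For a scale $\rho$ and a point $x\in E$, let $b_\rho(x)$ count the $\delta$-intervals of $E$ within distance $\rho$ of $x$. Pigeonholing over the branching profile $k\mapsto b_{2^{-k}}$ produces a subset $E'\subseteq E$, still of measure $\delta^{1/2+O(\eps)}$, on which this profile is --- up to polylogarithmic factors --- linear in $\log(1/\rho)$, so that $E'$ behaves at every scale like an Ahlfors $1/2$-regular set; after intersecting with the refinements of Step 1, the same is true of the relevant sum and product sets. This is the Katz--Tao/Bourgain reduction to a self-similar model.

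\textbf{Step 3 (Projections and multiscale sum--product).} For $t\in E'$ we have $tE'\subseteq E'\cdot E'$, hence $E'+tE'\subseteq E'+E'\cdot E'$ has measure at most $\delta^{-O(c_1)}|E'|$ by Step 1. So the planar set $E'\times E'$ --- roughly $\delta^{-1}$ squares of side $\delta$, morally $1$-dimensional --- is flattened by \emph{every} projection $\pi_t(x,y)=x+ty$, with $t$ ranging over a $(\delta,1/2)_1$-set, onto an image that is morally only $1/2$-dimensional. Such uniform flattening is impossible. Using the scale-by-scale regularity of Step 2, one runs the classical sum--product mechanism at each of the $\sim\log(1/\delta)$ scales: at a generic scale the fibers of the projection must spread by a definite constant factor unless $E'$ is locally close to an arithmetic progression (which would blow up $E'\cdot E'$) or a geometric progression (which would blow up $E'+E'$), and the uniform non-concentration is what makes this dichotomy quantitative. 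Compounding a constant-factor gain over $\log(1/\delta)$ scales yields a power gain, $|E'+E'|+|E'\cdot E'|\geq\delta^{-\tau}|E'|$ for an absolute $\tau>0$, which contradicts Step 1 once $c_1<\tau$. (Equivalently, one may run Steps 2--3 in Tao's entropy formalism, where the additive and multiplicative Shannon entropies of $E'$ obey a Pl\"unnecke--Ruzsa-type inequality that cannot be saturated at dimension exactly $1/2$ without concentration.)

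\textbf{Main obstacle.} The heart of the proof is the gain in Step 3: manufacturing a definite spreading at a generic intermediate scale out of non-growth only at the coarsest scale $\delta$, and handling the borderline progression-like configurations. This is precisely where Bourgain's argument is most delicate --- the gain $\tau$ is tiny, so $c_1$ must be taken correspondingly small and $\eps$ smaller still, and the order of the parameters $\eps$ and $C$ together with the repeated passages to subsets has to be arranged so that no implied constant degenerates. Steps 1 and 2, by contrast, are comparatively soft pigeonholing and sum-set arithmetic.
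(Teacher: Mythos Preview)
The paper does not prove Theorem~\ref{bourgainThm}; it simply cites Bourgain~\cite{bour2} and states the result. What the paper \emph{does} prove (in Appendix~\ref{modifiedGKZAppendix}) is the technical variant Proposition~\ref{modifiedGKZ}, following the Guth--Katz--Zahl approach rather than Bourgain's. That approach is genuinely different from the one you sketch: instead of a multiscale iteration that compounds constant-factor gains across $\sim\log(1/\delta)$ scales, GKZ pass to a Moran-regular subset, fix a single $b\in A$ with large multiplicative energy against $A$, and then run a gap/dense dichotomy on the ratio set $B=\{(a_1-a_2)/(a_3-a_4):a_i\in A_1,\ |a_3-a_4|>\delta^{\gamma}\}$. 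Either $B$ is $s$-dense in $[0,1]$, which forces growth via iterated sumsets of the form $d_1A+d_2A+\cdots+d_2A$, or $B$ has an $s$-gap near $b/2$ or $(b+1)/2$, which forces growth by a different mechanism; optimizing $s$ and $\gamma$ gives the explicit exponent. This is more direct and quantitative than Bourgain's original argument.

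Your outline is a fair high-level description of Bourgain's architecture, and you correctly flag that Step~3 is where the substance lies. But as written it is a roadmap, not a proof: ``run the classical sum--product mechanism at each scale'' and ``compounding a constant-factor gain over $\log(1/\delta)$ scales'' are exactly the statements whose justification constitutes~\cite{bour2}. In particular, the dichotomy you invoke --- that at a generic scale $E'$ is close to an arithmetic or a geometric progression --- does not follow from the regularization in Step~2 alone; producing it requires the iterated Balog--Szemer\'edi--Gowers and rigidity arguments that are the heart of Bourgain's paper, and nothing in your sketch supplies them. Since the paper itself offers no proof of Theorem~\ref{bourgainThm} to compare against, the relevant comparison is with the GKZ argument in the appendix, which your proposal does not resemble.
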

As a consequence of \cite{kt} and \cite{bour2}, we have that $\gamma(1/2)\geq 1+c_2$. While the precise values of $c_1$ and $c_2$ were not computed explicitly in \cite{bour2,kt}, they are extremely small. Moreover, even if Conjecture \ref{discRingConj} was proved for the conjectured optimal value $c_1=1/2$, the reduction in \cite{kt} would still yield a bound for $\gamma(1/2)$ that is much smaller than the conjectured value $\gamma(1/2) = 5/4$.

In \cite{gkz}, Guth, Katz, and the second author found a simple new proof of Theorem \ref{bourgainThm} that also gives explicit quantitative estimates on the size of $c_1$.
\begin{theorem}[Guth-Katz-Zahl]\label{GKZTheorem}
Conjecture \ref{discRingConj} holds for all $c_1 < 1/136$.\footnote{One of the arguments in \cite{gkz} is inefficient, and Victor Lie observed that with a small modification the exponent can be improved to $1/128$.}
\end{theorem}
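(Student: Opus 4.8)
The plan is to run the Elekes sum--product argument in the $\delta$-discretized category, and to confront the fact that the only discretized incidence estimates actually available are much weaker than Szemer\'edi--Trotter. Since $E\subset[1,2]$ is a $(\delta,1/2)_1$-set with $|E|\ge\delta^{1/2+\eps}$, the number $N$ of $\delta$-intervals meeting $E$ lies in $[\delta^{-1/2+\eps},\,C\delta^{-1/2-\eps}]$, so up to factors $\delta^{\pm O(\eps)}$ we may treat $E$ as $\approx\delta^{-1/2}$ many $\delta$-separated points of $[1,2]$ obeying $\#(E\cap I)\lesssim\delta^{-\eps}(|I|/\delta)^{1/2}$ for every interval $I$. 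Assume toward a contradiction that $|E+E|\le\delta^{-c_1}|E|$ and $|E\cdot E|\le\delta^{-c_1}|E|$, for a small $c_1$. The $\delta$-discretized analogues of the Pl\"unnecke--Ruzsa inequalities (valid up to $\delta^{-O(\eps)}$ losses, argued through $\delta$-covering numbers) then bound every low-complexity combination such as $E\cdot E+E\cdot E$, $E\cdot(E+E)$, or $(E+E)(E+E)$ by $\delta^{-O(c_1)-O(\eps)}|E|$; this is what lets us assemble the Elekes configuration freely.

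Let $P$ be the $\delta$-neighborhood of $(E+E)\times(E\cdot E)$, a union of $\lesssim\delta^{-2c_1-O(\eps)}N^2$ $\delta$-balls, and let $\tubes=\{T_{a,b}\}$ be the $\delta$-neighborhoods of the lines $\{y=a(x-b)\}$ as $a,b$ range over a $\delta$-separated net of $E$, so that there are $\approx N^2$ essentially distinct tubes. Since $T_{a,b}$ passes through the $\delta$-ball of each point $(b+c,ac)\in(E+E)\times(E\cdot E)$ with $c\in E$, every tube is incident to $\gtrsim N$ balls of $P$, so the incidence count satisfies $I(P,\tubes)\gtrsim N^3\approx\delta^{-3/2}$.

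The heart of the matter is the matching upper bound for $I(P,\tubes)$. If a discretized Szemer\'edi--Trotter bound $I(P,\tubes)\lesssim\delta^{-O(\eps)}\big((\#P)^{2/3}(\#\tubes)^{2/3}+\#P+\#\tubes\big)$ were available here --- and the slopes $\{a:a\in E\}$ do form a $(\delta,1/2)$-set, so the tubes are genuinely spread in direction --- comparison with the lower bound would force $N^{1/3}\lesssim\delta^{-4c_1/3-O(\eps)}$, i.e.\ $\delta^{-1/6}\lesssim\delta^{-4c_1/3-O(\eps)}$, a contradiction whenever $c_1<1/8-O(\eps)$, and one would even obtain the near-optimal constant. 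But no such bound can hold at this generality: a scale-mixing $(\delta,1/2)$-set $E$ --- arithmetic-progression-like on scales $[\delta,\delta^{1/2}]$ and geometric-progression-like on $[\delta^{1/2},1]$, say --- produces a configuration carrying far more incidences than Szemer\'edi--Trotter permits while keeping both $|E+E|$ and $|E\cdot E|$ genuinely small at scale $\delta$. This scale mixing is precisely the obstruction that makes the discretized ring theorem delicate, and it is the source of the loss.

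To circumvent it I would argue by induction on scales. Partition $[\delta,1]$ into $\approx\log(1/\delta)$ dyadic scales and record at each scale $\rho$ whether the $\rho$-granularization of $E$ is additively concentrated, multiplicatively concentrated, or neither --- and here the bi-Lipschitz change of variables $x\mapsto\log x$ on $[1,2]$ turns multiplicative non-concentration of $E$ into additive non-concentration of $\log E$, so the multiplicative case reduces to the additive one. A pigeonhole isolates a single scale (or a controlled block of scales) at which $E$ genuinely behaves like a $(\rho,\sigma)$-set with no additive or multiplicative substructure; there one applies a single-scale sum--product estimate of the type underlying Bourgain's proof of Theorem~\ref{bourgainThm}, provable by a Pl\"unnecke--Ruzsa-plus-incidence argument once a two-ends reduction has arranged that $P$ and $\tubes$ satisfy the Katz--Tao non-concentration axioms --- but this yields only a gain of $\delta^{c}$ for a small explicit $c$. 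One then telescopes the per-scale gains back up using the discretized Pl\"unnecke--Ruzsa bounds. The main obstacle is the accounting in this last stage: each application of the single-scale estimate gains only a fixed small power of $\delta$, the two-ends and popularity refinements eat into that gain, and there are $\approx\log(1/\delta)$ scales, so the induction must be arranged so that a fixed positive fraction of the gain budget survives the recursion. Carrying out this optimization is what pins down an admissible value such as $c_1<1/136$; performing the two-ends reduction more efficiently, as Victor Lie observed, is what improves the constant to $1/128$.
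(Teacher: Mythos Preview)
Your proposal is not the Guth--Katz--Zahl argument, and as written it does not yield an explicit constant. You correctly set up the Elekes configuration and correctly observe that a discretized Szemer\'edi--Trotter bound fails in this generality; but your workaround --- partition into dyadic scales, pigeonhole to a single ``good'' scale, and there apply ``a single-scale sum--product estimate of the type underlying Bourgain's proof'' --- is essentially a black-box appeal to the very theorem under discussion. Bourgain's proof does run a delicate multi-scale iteration, and that is precisely why its constant is inexplicit; you cannot both invoke that machinery and claim to extract $c_1<1/136$. The sentence ``carrying out this optimization is what pins down an admissible value such as $c_1<1/136$'' is not a derivation, and nothing in your sketch distinguishes $1/136$ from $10^{-100}$.

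The actual GKZ argument is structurally different and uses neither incidence bounds nor induction on scales. Starting from small additive doubling and large multiplicative energy of $A$, one pigeonholes to a single $b\in A$ with $\sum_{a\in A}\#\big((aA)_{\delta^+}\cap(bA)_{\delta^+}\big)\gtrsim K^{-1}(\#A)^2$, and through Pl\"unnecke--Ruzsa manipulations obtains the key estimate $\mathcal{E}_\delta(d_1A+d_2A)\lesssim K^{O(1)}\max(|d_1|,|d_2|)^{\alpha}\,\#A$ for $d_1,d_2\in A-A$ (Lemma~3.6 of \cite{gkz}; cf.~\eqref{analogue12} and \eqref{analogue14}). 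The crux is then the \emph{quotient set} $B=\{(a_1-a_2)/(a_3-a_4):|a_3-a_4|>\delta^\gamma\}$ and a single dichotomy (Lemma~4.1 of \cite{gkz}): either $B\cap[0,1]$ is $s$-dense, or there exists $b\in B$ with both $b/2$ and $(b+1)/2$ at distance $\ge s$ from $B$. Each branch produces a clean polynomial inequality among the doubling/energy constants and $\delta$ (see \eqref{densePossibility} and \eqref{gapPossibility} in the appendix), and the numerical value $1/136$ --- sharpened to $1/128$ by Lie's observation --- drops out of an explicit optimization over the two free parameters $s,\gamma$ as in \eqref{defnSAndGamma}. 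There is no incidence theorem and no recursion over scales anywhere in the proof.
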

Armed with Theorem \ref{GKZTheorem} (or more precisely, a new, more technical variant), we prove new quantitative estimates on $\gamma(\alpha)$. First, we will introduce a class of Furstenberg-type sets, which have generated recent interest due to their connections with projection theory\footnote{Indeed, Theorem \ref{dimFurstenberg} below can be rephrased as a Kaufman-type projection theorem. See \cite{OrpSh} for related discussion.}. In the definition below, we identify a set $L$ of lines in $\RR^2$ with the corresponding points in the affine Grassmannian $A(2,1)$. Since the latter is a manifold, this allows us to define $\dim_H(L)$.

\begin{definition}[\cite{mr}][$(\alpha,\beta)$-Furstenberg set]
For $0<\alpha\leq 1,\ 0<\beta\leq 2$, we say that a set $E\subset\RR^2$ is a (planar) \emph{$(\alpha,\beta)$-Furstenberg set} if there exists a set $L$ of lines, with $\dim_H(L)\geq\beta$, so that $\dim_H(E\cap l)\geq\alpha$ for each $l\in L$. 
\end{definition}
We define
\[
\gamma(\alpha,\beta) = \inf\{ \dim_H(E) : E \text{ is an } (\alpha,\beta)\text{-Furstenberg set}\}.
\]
Numerous authors \cite{DOV, Her, HKM,shmerk, LuSt, mr, OrpSh} have obtained lower bounds for $\gamma(\alpha,\beta)$ in different regimes. The most relevant for our discussion is the following result of H\'era, Shmerkin, and Yavicoli, which used a variant of Theorem \ref{bourgainThm} due to Bourgain \cite{bour3} to obtain bounds on $\gamma(\alpha,2\alpha)$. 
\begin{theorem}[H\'era, Shmerkin, and Yavicoli]\label{HSYThm}
For each $0<\alpha< 1$, there exists $c(\alpha)>0$, which depends continuously on $\alpha$, so that $\gamma(\alpha,2\alpha)\geq 2\alpha+c(\alpha)$.
\end{theorem}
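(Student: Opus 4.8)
The plan is to reduce Theorem~\ref{HSYThm} to a single-scale, discretized incidence statement and to extract a gain over the trivial bound $\gamma(\alpha,2\alpha)\ge2\alpha$ from Bourgain's discretized projection theorem~\cite{bour3}. Fix $0<\alpha<1$ and a small $c=c(\alpha)>0$ to be chosen, and suppose for contradiction that $E$ is an $(\alpha,2\alpha)$-Furstenberg set with $\dim_H E=\sigma<2\alpha+c$. By the standard passage from Hausdorff dimension to a single scale — work along a sparse sequence $\delta\to0$ on which the $\delta$-covering numbers of $E$ and of its line set $L$ are both nearly extremal, and pigeonhole the intersections so that a positive-density subfamily of lines each meets $E$ in a non-concentrated $(\delta,\alpha)$-set of $\sim\delta^{-\alpha}$ balls — we arrive at a family $\tubes$ of $\sim\delta^{-2\alpha}$ essentially distinct $\delta$-tubes, each carrying $\gtrsim\delta^{-\alpha}$ of the $\delta$-balls of $E$, with $N_\delta(E)\lesssim\delta^{-\sigma}$, whose directions form a $(\delta,\beta_1)$-set and whose translates within each fixed direction form a $(\delta,\beta_2)$-set, with $\beta_1+\beta_2=2\alpha$ and $0\le\beta_1\le\min(1,2\alpha)$.

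Writing lines as $\ell_{a,b}=\{(x,ax+b)\}$ and $\pi_a(x,y)=y-ax$, the configuration says: there is a slope set $\mathcal A$ with $\dim_H\mathcal A=\beta_1$ and, for each $a\in\mathcal A$, an intercept set $B_a$ with $\dim_H B_a\gtrsim\beta_2$, such that $\dim_H(\pi_a^{-1}(b)\cap E)\ge\alpha$ for every $b\in B_a$. The slicing inequality applied to a single $\pi_a$ gives $\sigma\ge\alpha+\dim_H B_a\ge3\alpha-\beta_1$, so the contradiction hypothesis forces $\beta_1>\alpha-c$: all but a negligible part of the $2\alpha$ dimensions of $L$ must be carried by the \emph{directions}, and equivalently $\beta_2=2\alpha-\beta_1<\alpha+c$, so the slicing bound $\alpha+\beta_2$ exceeds $2\alpha$ by less than $c$. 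Hence the improvement must come from genuinely \emph{two-dimensional} information. The degenerate sub-case $\beta_2\to0$, which can only occur for $\alpha\le\tfrac12$, is the classical direction-restricted $\alpha$-Furstenberg problem and is dealt with by the Katz--Tao reduction together with the discretized sum--product input (Theorems~\ref{bourgainThm}/\ref{GKZTheorem} and its analogue in~\cite{bour3}); so the heart of the matter is the regime where $\beta_2$ is bounded below.

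In that regime I would run the following bootstrap. First dispose of the concentrated configurations: if a positive proportion of the tubes pass within $\delta^{\eta}$ of a common point, the resulting bush has $\gtrsim\delta^{-2\alpha}$ branches that are pairwise essentially disjoint away from that point, each carrying $\gtrsim\delta^{-\alpha}$ balls, whence $N_\delta(E)\gtrsim\delta^{-3\alpha}$ and $\sigma\ge3\alpha>2\alpha+c$; and if a positive proportion of the $\delta$-balls of $E$ lies within $\delta^{\eta}$ of a single line, rescaling that slab reduces matters to a Furstenberg-type configuration at the finer scale $\delta^{1-\eta}$, on which one closes by induction on scales or returns to the non-concentrated case. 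In the remaining, genuinely non-concentrated, case one regularizes (passing to a sub-configuration and, via multiscale pigeonholing, to a possibly smaller scale) a subset $E'\subseteq E$ that is Ahlfors-type $(\delta,\sigma)$-regular, whose directions still form a $(\delta,\beta_1)$-set with $\beta_1>\alpha-c$, and that is not contained in the $\delta^{\eta}$-neighborhood of any line; Bourgain's theorem~\cite{bour3} then supplies $\tau=\tau(\alpha)>0$ and an exceptional set of slopes of dimension $<\tau_0(\alpha)<\alpha$ off of which $\dim_H\pi_a(E')\ge\tfrac{\sigma}{2}+\tau$. Since $\dim_H\mathcal A=\beta_1>\alpha-c>\tau_0(\alpha)$, this improved projection bound is available in a direction still compatible with the fibered structure, and re-threading it through that structure — iterating the renormalization so that the surplus $\tfrac{\sigma}{2}+\tau-\beta_2>0$ is re-presented as extra Furstenberg structure at a finer scale — amplifies the discrepancy into a genuine contradiction, closing the main case with an explicit $c=c(\alpha)>0$. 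Continuity of $c(\alpha)$ then follows from the continuity (or, by a routine compactness argument, the lower semicontinuity and positivity) of $\tau(\alpha),\tau_0(\alpha)$ in~\cite{bour3} together with that of the elementary losses $\beta_1-\alpha$, $c$, and the $\eps$-powers absorbed in the regularization; the requirement $\tau_0(\alpha)<\alpha$ is exactly what forces $\alpha$ away from the endpoints.

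The step I expect to be the main obstacle is precisely this re-threading: converting $\dim_H\pi_a(E')\ge\sigma/2+\tau$ back into a lower bound for $\dim_H E$ itself. The difficulty is that slicing over the $a$-fibers only sees the intercept set $B_a$, not the full shadow $\pi_a(E')$, so the single-scale estimate is not self-improving and the gain has to be routed through a genuine multiscale induction (in the spirit of the $L^q$-dimension arguments of Shmerkin and of H\'era--Shmerkin--Yavicoli) rather than extracted at one scale. A secondary difficulty is arranging the non-concentration and not-near-a-line hypotheses of~\cite{bour3} uniformly across the scales of the induction, with losses small enough that $\tau_0(\alpha)$ can be kept below $\alpha$ — which is also the quantitative reason the method yields only $c(\alpha)>0$ on the open interval $(0,1)$.
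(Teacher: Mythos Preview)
This theorem is quoted from~\cite{shmerk} and not re-proved in the paper; the paper's own contribution is the explicit Theorem~\ref{dimFurstenberg}, established through the sum-product input Proposition~\ref{modifiedGKZ} (a tracked variant of~\cite{gkz}) rather than Bourgain's projection theorem. Your route through~\cite{bour3} is therefore closer in spirit to H\'era--Shmerkin--Yavicoli's original argument than to this paper's, which deliberately replaces the projection step by a sum-product step precisely in order to extract quantitative constants.

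That said, your outline is not a proof, and the gap is exactly where you locate it. You correctly observe that Bourgain's theorem enlarges $\pi_a(E')$ while the slicing inequality only sees the intercept set $B_a\subset\pi_a(E')$, so the surplus $\sigma/2+\tau-\beta_2$ cannot be fed back into a lower bound for $\dim_H E$ by slicing alone; but the ``re-threading / multiscale induction'' you invoke to close this is not actually specified, and nothing in what you wrote supplies a mechanism that converts the enlarged projection into enlarged fibered structure at a finer scale. Both~\cite{shmerk} and the present paper avoid this feedback loop entirely: they pass once, via Proposition~\ref{discretizedToHausdorff}, to a single-scale statement about discretized $(\alpha,2\alpha)$-Furstenberg sets and then prove the covering-number lower bound $\mathcal{E}_\delta(E)\gtrsim\delta^{-2\alpha-c}$ directly, with the non-concentration (projection or sum-product) input applied \emph{inside} an incidence count rather than iterated against $\dim_H E$. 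A smaller issue: your bush case posits $\sim\delta^{-2\alpha}$ direction-separated tubes through a common point, which is impossible once $2\alpha>1$, so that branch of the casework would have to be rewritten for $\alpha>1/2$.
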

Note that when $\alpha=1/2$, this recovers the previous bound $\gamma(1/2)\geq 1+c_2$ (possibly for a different value of $c_2$) discussed above.


While the authors in \cite{shmerk} do not explicitly compute $c(\alpha)$, the value obtained by their arguments would be extremely small. Using Theorem \ref{GKZTheorem} (or more precisely, a technical variant of the theorem), we obtain explicit estimates on the size of $c(\alpha)$. This estimate is stronger than what would be obtained by merely computing the value of $c(\alpha)$ from the arguments in \cite{shmerk}. 

\begin{theorem}\label{dimFurstenberg}
$\gamma(\alpha,2\alpha)\geq 2\alpha+\frac{\alpha(1-\alpha)}{6(155+68\alpha)}$. In particular, $\gamma(1/2)\geq \gamma(1/2,1)\geq 1 + 1/4536$.
\end{theorem}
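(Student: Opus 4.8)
The plan is to deduce Theorem~\ref{dimFurstenberg} from a single-scale, discretized incidence estimate --- which will be the technical heart of the paper, stated separately as Theorem~\ref{mainthmdiscretized} --- by a standard pigeonholing and discretization argument, and to prove that discretized estimate using the quantitative sum--product bound of Theorem~\ref{GKZTheorem} (in a more technical, multi-scale variant).

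\textbf{Step 1 (discretization).} Suppose toward a contradiction that $E$ is an $(\alpha,2\alpha)$-Furstenberg set with $\dim_H E = s < 2\alpha + c$, where $c = \frac{\alpha(1-\alpha)}{6(155+68\alpha)}$, witnessed by a line family $L$ with $\dim_H L \geq 2\alpha$. Fixing a small $\eps>0$ and a Frostman measure on $L$, one extracts a sequence of scales $\delta\to 0$ and, at each such scale, a $(\delta,2\alpha)$-type collection $\tubes$ of roughly $\delta^{-2\alpha}$ many $\delta$-tubes together with a set $P$ of roughly $\delta^{-s}$ many $\delta$-balls covering $E$, such that each $T\in\tubes$ contains a $(\delta,\alpha)_1$-set of $\gtrsim\delta^{-\alpha}$ balls of $P$. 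Regularizing the branching of $L$ across scales, ensuring the per-tube point sets are genuine $(\delta,\alpha)$-sets, and absorbing the $\eps$-losses are routine, but must be carried out carefully so as not to degrade the final exponent.

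\textbf{Step 2 (the discretized theorem).} The core claim is that such a configuration cannot exist once $|P|\leq\delta^{-(2\alpha+c)}$ with $c$ below the stated threshold. I would prove this by passing to the dual picture (points of $P$ $\leftrightarrow$ lines, tubes of $\tubes$ $\leftrightarrow$ points), so that a near-minimal point set $P$ forces the incidence configuration to be close to extremal, hence --- after a two-ends reduction and a multi-scale regularization --- locally modeled on a product-like set $A\cdot A$ with $|A|\approx\delta^{-\alpha}$. One then runs the Guth--Katz--Zahl sum--product machine in the technical variant needed for general $\alpha$ and for sets that are only approximately $(\delta,\alpha)$-sets: the existence of many lines of distinct slopes meeting such a product-like set in $\gtrsim\delta^{-\alpha}$ points contradicts the expansion guaranteed by Theorem~\ref{GKZTheorem}. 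Tracking the sum--product exponent ($\approx 1/128$) through the dualization, the two-ends reduction, and the induction on scales is exactly what produces the explicit bound $c<\frac{\alpha(1-\alpha)}{6(155+68\alpha)}$.

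\textbf{Step 3 (conclusion).} Combining Steps 1 and 2 yields a contradiction, so $\dim_H E \geq 2\alpha + \frac{\alpha(1-\alpha)}{6(155+68\alpha)}$; taking the infimum over all $(\alpha,2\alpha)$-Furstenberg sets gives the stated bound for $\gamma(\alpha,2\alpha)$. Specializing to $\alpha=1/2$ gives $\frac{(1/2)(1/2)}{6(155+34)} = \frac{1}{4\cdot 6\cdot 189} = \frac{1}{4536}$, and since a $1/2$-Furstenberg set is in particular a $(1/2,1)$-Furstenberg set (the chosen lines, one per direction, form a set of dimension at least $1$), we get $\gamma(1/2)\geq\gamma(1/2,1)\geq 1+\frac{1}{4536}$.

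The main obstacle is the quantitative bookkeeping in Step 2: arranging the sum--product input of Theorem~\ref{GKZTheorem} in a form robust enough to survive dualization and the multi-scale/two-ends reductions, and then summing the accumulated losses so that the resulting constant is both clean and, crucially, strictly positive for every $\alpha\in(0,1)$. By contrast, the discretization in Step 1 and the final arithmetic in Step 3 are standard.
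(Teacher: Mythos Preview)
Your overall architecture is right and matches the paper: reduce to a discretized statement (the paper quotes Lemma~3.3 of \cite{shmerk} rather than re-doing Frostman/pigeonholing, but the effect is identical), prove that discretized statement using a technical variant of the Guth--Katz--Zahl sum--product theorem, and specialize to $\alpha=1/2$. Steps~1 and~3 are indeed standard and you have them right, including the arithmetic.

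Where your proposal diverges from the paper, and where it is too thin to count as a plan, is Step~2. The paper does \emph{not} pass to the dual picture. It works in the primal picture throughout and builds a very specific point--line configuration: fix a line $l_0$, count triples $(p,l,q)$ with $p\in l_0$, $l\ni p$, $q\in l$; apply a two-ends argument and then H\"older to upgrade to quadruples $(p_1,p_2,p_3,p_4)\in l_0^4$ all joined to a common $q$ by lines of $L$; pigeonhole to fix a fifth point $p_5$ on the line $\overline{p_3q}$; and then apply a \emph{projective} transformation sending $l_0$ to the line at infinity so that the four pencils through $p_1,p_2,p_4,p_5$ become vertical, horizontal, slope-$1$, and lines through the origin. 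Only at that point does a product-like structure emerge: the surviving points $q$ lie in $X\times Y$, are near a line from $Z$ (giving small partial difference set) and near a line through the origin (giving large multiplicative energy). To feed this into sum--product one still needs Balog--Szemer\'edi--Gowers to pass from a partial difference set to an honest difference set, and one needs the refined version of GKZ (Proposition~\ref{modifiedGKZ}) that separately tracks $K_1,K_2,K_3,K_4$. None of these ingredients --- the four-point construction, the projective transformation, or BSG --- appear in your sketch, and ``locally modeled on $A\cdot A$'' does not by itself produce them.

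A second substantial piece you underestimate is the reduction from a raw discretized Furstenberg configuration to one satisfying the extra non-concentration hypotheses (Properties~(E) and~(F) in the paper). This is not just routine regularization: the paper uses a stopping-time construction on the lines to control how many lines ``contribute'' to each dyadic square, then a Moran-regular refinement and an induction on scales to force the \emph{directions} of lines through each point to form a $(\delta,\alpha)$-set, and finally a rectangle decomposition plus rescaling to recover robust transversality. This is a full section of the paper and contains ideas (particularly the stopping-time/domination argument and the scale-by-scale proof of~\eqref{linesNotTooConcentrated}) that you would need to supply.
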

Our bound is novel provided $\alpha$ is not much larger than $1/2$. For larger $\alpha$, the bound $\gamma(\alpha,2\alpha)\geq 2\alpha+(1-\alpha)(2\alpha-1)$ recently obtained by D\k{a}browski, Orponen, and Villa \cite{DOV} is superior.

\subsection{Thanks}
The authors would like to thank Victor Lie, Pablo Shmerkin, and Alexia Yavicoli for helpful comments and suggestions on a previous version of this manuscript. The authors would also like to thank the anonymous referee for many helpful comments, suggestions, and corrections. The first author was supported in part by a Four Year Doctoral Fellowship from the University of British Columbia. The second author was supported by a NSERC Discovery Grant.

\section{Discretized $(\alpha,\beta)$-Furstenberg sets}
To prove Theorem \ref{HSYThm}, H\'era, Shmerkin, and Yavicoli introduced an object they called a discretized $(\alpha,\beta)$-Furstenberg set. Before defining this object, we will introduce some additional notation. If $X\subset\RR^n$, we define the $\delta$-covering number $\mathcal{E}_{\delta}(X)$ to be the minimum number of balls of radius $\delta$ required to cover $X$. If $X$ is itself a union of balls of radius $\delta$, then $\mathcal{E}_{\delta}(X)\sim \delta^{-n}|X|$, where the implicit constant depends on $n$. Abusing notation, we say a set $X\subset\RR^n$ (which need not be a union of balls of radius $\delta$) is a $(\delta,\alpha)_n$-set if for each ball $B$ of radius $r\geq\delta$, we have $\mathcal{E}_{\delta}(E\cap B)\leq C\delta^{-\eps}(r/\delta)^\alpha$. In particular, if $X$ is a union of balls of radius $\delta$, then it is a $(\delta,\alpha)_n$-set in the sense given here if and only if it is a $(\delta,\alpha)_n$-set using the previous definition, though the associated constant $C$ will differ in these two definitions by a multiplicative factor depending on $n$. Since $n\in\{1,2\}$ throughout this paper, this distinction will not be important.

Next, if $L$ is a set of lines in $\RR^2$, we can identify $L$ with a subset of the affine Grassmannian $A(2,1)$. For concreteness, we will parameterize a coordinate patch of $A(2,1)$ as follows: if $l$ is the line $y=mx+b$ with $(m,b)\in [0,1]^2,$ we define $\iota(l)=(m,b)$ and $\operatorname{dir}(l) = (1,m)/\sqrt{1+m^2}$. In the discussions that follow, we can assume that every line is of the above form. We define the distance between two lines $l,l'$ to be the Euclidean distance between the points $\iota(l)$ and $\iota(l'),$ and we define $\angle(l,l')=\angle(\operatorname{dir}(l),\operatorname{dir}(l'))$.

We say a set $L$ of lines is $\delta$-separated if $\operatorname{dist}(\iota(l),\iota(l'))\geq\delta$ for all distinct $l,l'\in L$. A set $L$ of lines is said to be a $(\delta,\beta)_2$-set if $\iota(L)\subset[0,1]^2$ is a $(\delta,\beta)_2$-set. With these definitions, we recall the following definition from \cite{shmerk}. 
\begin{definition}\label{discretizedFurstDef}
For $0<\alpha\leq 1,$ $0<\beta\leq 2$, $\eps>0$, $C>0$, and $\delta>0$, we say that a set $E\subset\RR^2$ is a (planar) \emph{discretized $(\alpha,\beta)$-Furstenberg set} if $E\supset \bigcup_{l\in L}P_l$, where:
\begin{enumerate}[(i)]
\item\label{firstItemDiscretizedFurstenbergDefn} $L$ is a $\delta$-separated, $(\delta,\beta)_2$-set of lines (with implicit parameters $\eps$ and $C$), and $\# L\geq C^{-1}\delta^{\eps-\beta}$.
\item\label{secondItemDiscretizedFurstenbergDefn} For each $l\in L,$ $\mathcal{P}_l\subset l$ is a $\delta$-separated $(\delta,\alpha)_1$-set (with implicit parameters $\eps$ and $C$), and $\#\mathcal{P}_l \geq C^{-1}\delta^{\eps-\alpha}$.
\end{enumerate}
\end{definition}
The authors in \cite{shmerk} proved Theorem \ref{HSYThm} by showing that every discretized $(\alpha,2\alpha)$-Furstenberg set must have large $\delta$ covering number. The relationship between these two statements is as follows.
\begin{proposition}[\cite{shmerk}, Lemma 3.3]\label{discretizedToHausdorff}
Let $0<\alpha < 1,$ $0<\beta<2$, and $s\geq 0$. Suppose that there exists $\eps>0$ and $C>0$ so that for all sufficiently small $\delta>0$, every discretized $(\alpha,\beta)$-Furstenberg set $E$ satisfies $\mathcal{E}_\delta(E) \geq \delta^{-s}.$ Then every $(\alpha,\beta)$-Furstenberg set has Hausdorff dimension at least $s$.
\end{proposition}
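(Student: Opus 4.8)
The plan is to deduce Proposition~\ref{discretizedToHausdorff} from the hypothesis by a standard pigeonholing-and-rescaling argument, contrapositive in spirit: assume $E$ is an $(\alpha,\beta)$-Furstenberg set with $\dim_H(E) < s$, and manufacture, at infinitely many scales $\delta$, a discretized $(\alpha,\beta)$-Furstenberg set sitting inside (a neighborhood of) $E$ whose $\delta$-covering number is smaller than $\delta^{-s}$, contradicting the assumption. The heart of the matter is passing from the Hausdorff-dimensional data — a set $L$ of lines with $\dim_H(L)\geq\beta$ and $\dim_H(E\cap\ell)\geq\alpha$ for each $\ell\in L$ — to the discretized combinatorial data of Definition~\ref{discretizedFurstDef}, namely a single scale $\delta$ at which one simultaneously has $\#L\gtrsim\delta^{\eps-\beta}$ separated lines, each carrying $\gtrsim\delta^{\eps-\alpha}$ points, with the uniform $(\delta,\cdot)$-ball conditions.

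Concretely, I would proceed as follows. First, fix $\eps>0$ small (to be the $\eps$ in the hypothesis) and suppose for contradiction that $\dim_H(E) = s' < s$; pick $s''$ with $s' < s'' < s$. Using a Frostman-type / covering argument, one extracts from $L$ (which has positive $\beta'$-dimensional content for $\beta' < \beta$ close to $\beta$) a $(\delta,\beta)_2$-set of lines that is $\delta$-separated and of cardinality $\gtrsim\delta^{\eps-\beta}$ — this is the content of the standard fact that a set of Hausdorff dimension $\geq\beta$ contains, at a sequence of scales $\delta\to 0$, a $\delta$-separated $(\delta,\beta-\eps)$-set of the right size (a discretized Frostman lemma, e.g.\ via a stopping-time decomposition of a Frostman measure or via the dyadic pigeonhole on covering numbers). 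Similarly, for each such line $\ell$, the set $E\cap\ell$ has dimension $\geq\alpha$, so at appropriate scales it contains a $\delta$-separated $(\delta,\alpha)_1$-set $P_\ell\subset\ell$ of cardinality $\gtrsim\delta^{\eps-\alpha}$. The real work is to arrange a \emph{common} scale $\delta$ that works for the line set and for all (or a positive-proportion subfamily of) the lines at once, and here one uses a pigeonholing over dyadic scales together with the continuity of the relevant covering-number exponents, possibly discarding lines on which no good scale recurs; since the bad lines form a set of small dimension they can be thrown away without destroying $\dim_H(L)\geq\beta$.

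Having produced, at such a common scale $\delta$, the data $L$ and $\{P_\ell\}_{\ell\in L}$, the $\delta$-neighborhood of $E$ contains a discretized $(\alpha,\beta)$-Furstenberg set $E_\delta := \bigcup_{\ell\in L} P_\ell$ in the sense of Definition~\ref{discretizedFurstDef} (with the implicit constants $\eps,C$). By the hypothesis, $\mathcal{E}_\delta(E_\delta)\geq\delta^{-s}$, hence $\mathcal{E}_\delta(E)\geq c\,\delta^{-s}$ for these scales. But $\dim_H(E)=s' < s''$ forces, via the elementary inequality $\mathcal{H}^{t}_\infty(E)\lesssim \delta^{t}\,\mathcal{E}_\delta(E)$ relating covering numbers to Hausdorff content (take $t=s''$), that $\mathcal{E}_\delta(E) = o(\delta^{-s''}) = o(\delta^{-s})$ along \emph{every} sequence $\delta\to 0$ — contradiction. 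Therefore $\dim_H(E)\geq s$, as claimed.

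The main obstacle is the scale-synchronization step: a priori the ``good scales'' for the line set $L$ and for the individual fibers $E\cap\ell$ need not coincide, and moreover the Hausdorff-dimension hypotheses only give good scales along sparse sequences that may differ from line to line. I expect this to be handled by a two-step pigeonhole — first fix a dyadic scale $\delta$ at which a definite fraction of the mass of $L$ looks $(\delta,\beta)_2$-regular (e.g.\ by decomposing a Frostman measure on $\iota(L)$ into scales and using $\dim_H\geq\beta$ to find a recurrent scale), then, having fixed $\delta$, restrict to the subfamily of lines for which $E\cap\ell$ also exhibits $(\delta,\alpha)_1$-regular behavior at that same $\delta$, and argue this subfamily still has dimension $\geq\beta$ because its complement has small dimension. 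This is precisely the kind of bookkeeping carried out in \cite{shmerk}, so I would follow their Lemma~3.3 argument for the details, taking care that the implicit constants $C$ and the loss $\delta^{\eps}$ are absorbed correctly and that the construction genuinely lands inside a $\delta$-neighborhood of $E$ (so that $\mathcal{E}_\delta$ of the construction bounds $\mathcal{E}_\delta(E)$ up to a constant).
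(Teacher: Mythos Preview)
The paper does not supply its own proof of this proposition; it is quoted directly from \cite{shmerk}, Lemma~3.3, so there is no in-paper argument to compare against. That said, your proposed argument contains a genuine gap at the closing step.

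You derive $\mathcal{E}_\delta(E)\gtrsim\delta^{-s}$ along a sequence of scales and then attempt to contradict $\dim_H(E)<s$ via the inequality $\mathcal{H}^{t}_\infty(E)\lesssim \delta^{t}\mathcal{E}_\delta(E)$. But that inequality yields a \emph{lower} bound on $\mathcal{E}_\delta(E)$ in terms of content, not an upper bound: from $\mathcal{H}^{s''}_\infty(E)=0$ you deduce only $\mathcal{E}_\delta(E)\geq 0$, which is vacuous. In general, small Hausdorff dimension imposes no upper constraint on covering numbers --- a countable dense subset of $[0,1]^2$ has $\dim_H=0$ yet $\mathcal{E}_\delta\sim\delta^{-2}$ at every scale. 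Lower bounds on $\mathcal{E}_\delta$ along a sequence of scales control at best (upper) box dimension, and box dimension dominates Hausdorff dimension, so no contradiction follows.

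The argument in \cite{shmerk} is therefore necessarily organized differently: one fixes $s'<s$ and an \emph{arbitrary} cover of $E$ by balls $\{B_i\}$, and shows directly that $\sum_i (\operatorname{diam} B_i)^{s'}\gtrsim 1$. The Frostman-measure input you describe for $L$ and for each $E\cap\ell$ is the correct raw material, but it must be combined with a pigeonholing over the dyadic scales \emph{present in the given cover} (not over scales chosen freely), so that at some scale $\delta$ the balls of size $\sim\delta$ in the cover already carry a definite share of $\sum r_i^{s'}$ while the Frostman data still produces a discretized $(\alpha,\beta)$-Furstenberg configuration sitting inside those balls; the hypothesis then lower-bounds the number of such balls by $\delta^{-s}$. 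Your scale-synchronization discussion is relevant to this construction, but the overall logical architecture --- bounding Hausdorff content from below against an arbitrary cover, rather than deriving a contradiction from covering-number growth --- is essential and is missing from your sketch.
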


Thus, in order to prove Theorem \ref{dimFurstenberg}, it suffices to establish the corresponding statement for discretized Furstenberg sets. Our result in this direction is the following. 
\begin{proposition}\label{discretizedFurstenbergProp}
Define $c(\alpha)=\frac{\alpha(1-\alpha)}{6(155+68\alpha)}$. Then for every $0<\alpha<1$ and every $c<c(\alpha)$, there exists $\eps>0$ and $C>0$ so that the following holds for all $\delta>0$ sufficiently small. Let $E$ be a discretized $(\alpha,2\alpha)$-Furstenberg set (with implicit parameters $\eps$ and $C$). Then $\mathcal{E}_\delta(E) \geq \delta^{-2\alpha -c}.$
\end{proposition}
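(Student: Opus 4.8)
The plan is to run a pigeonholing-and-rescaling argument that reduces the Furstenberg problem, at an appropriate scale, to an instance of the discretized ring / sum-product estimate from Theorem \ref{GKZTheorem}. Let me set up the dichotomy first. Let $E \supset \bigcup_{l\in L} P_l$ be a discretized $(\alpha,2\alpha)$-Furstenberg set, and suppose for contradiction that $\mathcal E_\delta(E) \leq \delta^{-2\alpha-c}$ with $c$ close to $c(\alpha)$. A standard two-ends / bush-type argument (as in \cite{w3,shmerk}) shows that after pigeonholing we may assume the points of $E$ and the lines of $L$ are arranged in a ``balanced'' incidence configuration: roughly, a typical $\delta$-ball $B\subset E$ meets about $\mu$ of the lines in $L$, and a typical line meets about $\delta^{-\alpha}$ of the balls, so that the incidence count $I \sim \#L\cdot \delta^{-\alpha} \sim \mu\cdot \mathcal E_\delta(E)$; combined with the Furstenberg hypothesis this pins down $\mu$ up to $\delta^{-O(\eps)}$.

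The core of the argument is a scale decomposition. Fix an intermediate scale $\rho=\delta^{t}$ with $t\in(0,1)$ to be optimized. At scale $\rho$, the line set $L$ projects to a $(\rho,2\alpha)_2$-set of $\rho$-tubes, and within each $\rho$-tube the finer structure is (after rescaling by $\rho^{-1}$) again a Furstenberg-type configuration at scale $\delta/\rho$. The key geometric input is that in the plane, a set of lines forming a $(\cdot,2\alpha)$-set cannot be concentrated near a single point or near a single line without forcing extra dimension --- this is where the ``$\beta=2\alpha$'' hypothesis is used, via the observation that the point-line duality turns the line set into a configuration to which the sum-product phenomenon applies: a $(\delta,\alpha)_1$-set that fails to grow under the relevant additive/multiplicative operations forces the $P_l$ to line up in an arithmetic-progression-like pattern across many lines, contradicting the $(\delta,\alpha)_1$-ness on the lines after rescaling. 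Quantitatively, one feeds the rescaled configuration into Theorem \ref{GKZTheorem} (or its technical variant): either the covering number already exceeds $\delta^{-2\alpha-c}$ at the coarse scale $\rho$, or else we gain a factor of $\rho^{-c_1}$-type growth at the fine scale inside each tube, with $c_1 < 1/136$. Iterating or balancing these two alternatives across the scales, and tracking how the exponent $\alpha$ enters (the $(1-\alpha)$ factor comes from the ``room'' between the $\alpha$-density on lines and the trivial bound, and the denominator $6(155+68\alpha)$ comes from the number of times one applies the ring estimate times its loss $136$, plus lower-order bookkeeping), produces the claimed bound $\mathcal E_\delta(E)\geq \delta^{-2\alpha-c}$ for any $c<c(\alpha)$.

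Concretely the steps are: (1) reduce to a balanced, refined incidence configuration by pigeonholing, absorbing all losses into $\delta^{-O(\eps)}$; (2) apply a single-scale incidence bound (a Cordoba/Córdoba-type $L^2$ or the standard Furstenberg $\delta$-discretized bound) to get the baseline $2\alpha$ and identify the ``defect'' --- the amount by which the true covering number might fall short of $\delta^{-2\alpha-c(\alpha)}$; (3) locate a scale $\rho$ at which the configuration looks like a non-trivial $(\rho',1/2)$-type set in the dual (renormalizing $\alpha$ to $1/2$ is not needed --- the technical variant of Theorem \ref{GKZTheorem} should handle general $\alpha\in(0,1)$, giving growth for $(\delta,\alpha)_1$-sets that are not AP-like); (4) invoke that growth estimate to derive a contradiction with the assumed upper bound on $\mathcal E_\delta(E)$; (5) optimize $t$ and chase constants to extract exactly $c(\alpha)=\frac{\alpha(1-\alpha)}{6(155+68\alpha)}$. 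The main obstacle, and where the bulk of the work lies, is step (3)–(4): making rigorous the passage from ``the covering number is small'' to ``the fine-scale structure on lines is an arithmetic progression'' so that the sum-product estimate bites, while keeping the $\eps$-losses and the geometry of the affine Grassmannian under control; this is precisely the part where the technical variant of Theorem \ref{GKZTheorem} (alluded to in the excerpt) must be invoked rather than the clean statement, and where the somewhat unlovely constant $155+68\alpha$ is born.
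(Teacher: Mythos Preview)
Your proposal is too schematic to count as a proof, and where it is concrete it diverges from what actually makes the argument work. The paper does not run a multiscale iteration in which one ``balances'' a coarse-scale covering bound against repeated fine-scale applications of the ring theorem. There is a \emph{single} application of the sum-product input (Proposition~\ref{modifiedGKZ}), and the denominator $6(155+68\alpha)$ does not arise from counting iterations of Theorem~\ref{GKZTheorem}: it comes from the exponents $K_1^6 K_2^{6+\alpha} K_3^{2(9+4\alpha)} K_4^{4(2+\alpha)}$ in Proposition~\ref{modifiedGKZ} combined with the specific losses incurred in a geometric construction you have not described. That construction is the substance of the proof: one fixes a line $l_0$, counts triples $(p,l,q)$, applies a two-ends reduction \emph{on angles through a point}, uses H\"older to build a nine-tuple configuration $(p_1,\dots,p_4,l_1,\dots,l_4,q)$ plus an auxiliary point $p_5$, pigeonholes to fix $(p_1,p_2,p_4,p_5)$, rescales a thin rectangle to a unit square, and then applies a projective transformation sending the $x$-axis to infinity so that the four pencils through $p_1,p_2,p_4,p_5$ become vertical lines, horizontal lines, slope-$1$ lines, and lines through the origin. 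This produces sets $X,Y,Z,W\subset[1,2]$ with $X+Y$ controlled by $Z$ and $X/Y$ controlled by $W$; one then needs Balog--Szemer\'edi--Gowers (which you do not mention) to pass to a subset $A\subset X$ with genuinely small difference set, and a Cauchy--Schwarz argument to lower-bound $E_\times^{\delta}(A,A)$. None of this is captured by ``locate a scale $\rho$ and feed the rescaled configuration into Theorem~\ref{GKZTheorem}.''

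There is a second, independent gap. The reduction from the discretized Furstenberg hypothesis (Definition~\ref{discretizedFurstDef}) to the incidence configuration above requires establishing two non-concentration properties that are \emph{not} consequences of pigeonholing: robust transversality of the lines through each point (Property~(\ref{robustTransversalityOfLinesThruPtItem})), handled by a bilinear-to-linear reduction and a rescaling that introduces the parameter $V$; and, more seriously, the $(\delta,\alpha)_1$ non-concentration of the \emph{directions} of lines through each point (Property~(\ref{nonConcentrationLinesThroughAPointItem})). The latter is obtained by a stopping-time construction of the incidence set together with a Moran-regular refinement and an induction on scales (the estimate \eqref{linesNotTooConcentrated}) that uses the $(\delta,2\alpha)_2$ hypothesis on $L$ in an essential way. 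Your step~(1), ``reduce to a balanced, refined incidence configuration by pigeonholing,'' does not produce these properties, and without them the sum-product step has nothing to bite on.
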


At a qualitative level, the connection between sum-product estimates, point-line incidences, and Furstenberg sets is well known. In \cite{bkt}, Bourgain, Katz, and Tao proved a sum-product estimate over $\FF_p$, and used this to obtain a point-line incidence bound in $\FF_p^2$. The argument used by H\'era, Shmerkin, and Yavicoli~\cite{shmerk} is also based on these ideas, but uses Bourgain's discretized projection theorem \cite{bour3} instead of the discretized sum-product theorem. Our main contribution is that our arguments are carefully structured to obtain an effective quantitative relationship between the discretized sum-product theorem and the size of $(\alpha,2\alpha)$-Furstenberg sets, and our result gives an understanding of the quantitative limits of current approaches to the problem. This is in contrast to previous arguments that were mostly optimized for clarify of exposition. While previous proofs do not claim explicit exponents for the size of $(\alpha,2\alpha)$-Furstenberg sets near $\alpha=1/2$, an examination of these arguments shows that the resulting estimates would yield an analogue of Proposition \ref{discretizedFurstenbergProp} with a value of $c(\alpha)$ that was several orders of magnitude smaller. Further discussion about the efficiency of our methods can be found in Remark \ref{discussionOfLossesRemark}. 

One major difficulty we encounter when proving Proposition \ref{discretizedFurstenbergProp} is that while both Theorem \ref{GKZTheorem} and Proposition \ref{discretizedFurstenbergProp} involve sets that satisfy non-concentration conditions, not all paths between these two non-concentration conditions are equally efficient. To overcome this difficulty, we construct a sequence of point-line configurations that allow us to translate between the two notions of non-concentration. See Figure \ref{countpl} for an overview of this translation step. 

There are two steps in particular that weaken the quantitative dependence between the exponent in Theorem \ref{GKZTheorem} and the value of $c(\alpha)$ from Proposition \ref{discretizedFurstenbergProp}. The first of these steps is a series of reductions that transforms a (hypothetical) counter-example to Proposition \ref{discretizedFurstenbergProp} into an arrangement of points and lines, where the points are contained in a projective Cartesian product, and then subsequently into an arrangement where the points are contained in a rectilinear Cartesian product. Our estimates worsen as the transformation between these latter two structures becomes more singular, and our arguments are carefully structured to mitigate this. The second step is an application of the Balog-Szemer\'edi-Gowers theorem to transform an arrangement of points and lines with many point-line incidences into an arrangement of points and lines where the arrangement of points has small projection in three linearly independent directions. Again, our arguments are structured to minimize the losses coming from the application of the Balog-Szemer\'edi-Gowers theorem.

\section{Tools from combinatorics}\label{toolsFromAddComb}

We will begin by introducing several standard tools from additive combinatorics. See~\cite{tv} for an introduction to the topic. We will also state and prove a more technical version of Theorem \ref{GKZTheorem}.

We begin with some definitions.  If $A,B$ are finite subsets of an Abelian group, and $E\subset A\times B$, we define the partial sum and partial difference sets
\[
A \overset{E}{+} B = \{a+b: (a,b)\in E\},\quad A \overset{E}{-} B = \{a-b: (a,b)\in E\}.
\]
If $E = A\times B$, then we will abbreviate this as $A+B$ (resp. $A-B$); this is the usual Minkowski sum and Minkowski difference. 

We recall the following version of the Balog-Szemer\'edi-Gowers theorem. The version stated here follows by making a small modification to Lemma 2.2 from \cite{bg}. The exact formulation given below can be found in~\cite[Lemma 8]{jones}. 

Here and throughout, we write $X\gtrsim Y$ or $Y\lesssim X$ if there is an absolute constant $C>0$ such that $X\geq CY$. If $X\gtrsim Y$ and $X\lesssim Y$, we write $X\sim Y$.

\begin{lemma}[Balog-Szemer\'edi-Gowers]\label{bsg}
Let $A,B$ be finite subsets of an Abelian group, and let $E\subset A\times B$. Then there is a set $A'\subset A$ such that 
\[
\#A' \gtrsim \frac{\#E}{\#B},\quad\textrm{and}\quad \#(A'-A') \lesssim \frac{(\#A)^4(\#B)^3(A \overset{E}{-} B)^4}{(\#E)^5}.
\]

\end{lemma}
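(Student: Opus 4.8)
The plan is to carry out the graph-theoretic Balog--Szemer\'edi--Gowers argument, but with the final counting organized so as to produce the stated exponents without incurring any logarithmic loss from degree pigeonholing. I regard $E$ as a bipartite graph between $A$ and $B$, write $m=\#E$, $D=A\overset{E}{-}B$, and for a vertex $v$ let $N(v)$ be its neighbourhood, so that $\sum_{b\in B}\#N(b)=m$ and, by Cauchy--Schwarz, $Q:=\#\{(a,a',b):(a,b),(a',b)\in E\}=\sum_{b\in B}\#N(b)^2\geq m^2/\#B$. Fix $\tau$ to be a small absolute constant times $m^2/((\#A)^2\#B)$, and call an ordered pair $(a,a')\in A\times A$ \emph{rich} if $\#\big(N(a)\cap N(a')\big)\geq\tau$. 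Note the identity $\sum_{b\in B}\#\{(a,a')\in N(b)\times N(b):(a,a')\text{ not rich}\}=\sum_{(a,a')\text{ not rich}}\#\big(N(a)\cap N(a')\big)\leq(\#A)^2\tau$, which is the only global input we need about richness.

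Next, instead of pigeonholing degrees I would locate a single good $b_0$ directly. For a small absolute constant $\eta$, call $b\in B$ \emph{bad} if $X_b:=\#\{(a,a')\in N(b)\times N(b):(a,a')\text{ not rich}\}$ exceeds $\eta\,\#N(b)^2$. By the displayed identity $\sum_b X_b\leq(\#A)^2\tau$, so $\sum_{b\ \mathrm{bad}}\#N(b)^2\leq\eta^{-1}\sum_{b\ \mathrm{bad}}X_b\leq\eta^{-1}(\#A)^2\tau\lesssim m^2/\#B$, which after choosing the constant in $\tau$ small enough is at most $\tfrac12 Q$. Hence $\sum_{b\ \mathrm{good}}\#N(b)^2\geq\tfrac12 m^2/\#B$, so some good $b_0$ satisfies $\#N_0^2\geq\tfrac12 m^2/(\#B)^2$, giving both $\#N_0\gtrsim m/\#B$ and (by goodness) at most $\eta\,\#N_0^2$ non-rich pairs inside $N_0\times N_0$, where $N_0:=N(b_0)$. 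Deleting from $N_0$ every vertex $a$ with $\#\{a'\in N_0:(a,a')\text{ not rich}\}>\tfrac1{10}\#N_0$ removes fewer than $10\eta\,\#N_0$ vertices, so the surviving set $A'\subseteq N_0\subseteq A$ still has $\#A'\gtrsim m/\#B$ — this is the first claimed bound — and has the property that any $a,a'\in A'$ admit at least $\tfrac{8}{10}\#N_0$ common ``rich midpoints'' $a''\in N_0$, i.e. $(a,a'')$ and $(a'',a')$ are both rich.

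To bound $\#(A'-A')$ I would double count the set $S$ of tuples $(\xi,d_1,d_2,d_3,d_4)$ with $\xi\in A'-A'$, each $d_i\in D$, and $d_1-d_2+d_3-d_4=\xi$. Since $(d_1,d_2,d_3,d_4)$ determines $\xi$, we have $\#S\leq(\#D)^4$. Conversely, for each $\xi\in A'-A'$ fix one pair $(a,a')\in A'\times A'$ with $a-a'=\xi$; then for each of the $\gtrsim\#N_0$ rich midpoints $a''$, each $b\in N(a)\cap N(a'')$ and each $b'\in N(a'')\cap N(a')$ (at least $\tau$ of each, by richness), the tuple $\big(a-b,\ a''-b,\ a''-b',\ a'-b'\big)$ lies in $D^4$ and telescopes to $\xi$. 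From $\xi$ one recovers $(a,a')$, then $b,b'$ from the first and fourth coordinates, then $a''$ from the second, so this assignment into $S$ is injective; hence $\#S\gtrsim\#(A'-A')\cdot\#N_0\cdot\tau^2$. Comparing the two estimates gives $\#(A'-A')\lesssim(\#D)^4/(\#N_0\,\tau^2)\lesssim(\#A)^4(\#B)^3(\#D)^4/m^5$, as required.

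The main obstacle is the middle step. A naive \emph{uniform} random choice of $b_0$ can fail to produce a neighbourhood that is simultaneously large and internally rich, because the cherry count $Q$ may be concentrated on a few high-degree vertices of $B$; repairing this by a dyadic pigeonholing of the $B$-degrees would cost a logarithmic factor that the stated bound does not permit. The ``bad vertices carry little weight'' computation above — comparing $\sum_{b\ \mathrm{good}}\#N(b)^2$ with $Q$ rather than controlling a typical or uniformly random $b_0$ — is what sidesteps this cleanly. A secondary point is to identify the right object for the final double count: one must count the five-tuples rather than, say, the pairs $(\xi,a'')$, and observe that the abundance of admissible midpoints $a''$ is exactly what supplies the extra factor $\#N_0\asymp m/\#B$ distinguishing the sharp $m^{-5}$ from the cruder $m^{-4}$ that a one-midpoint argument would give.
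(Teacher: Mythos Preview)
Your argument is correct. The paper does not supply its own proof of this lemma; it simply quotes the result, noting that this precise formulation is Lemma~8 of Jones~\cite{jones}, which in turn is a small modification of Lemma~2.2 of Bourgain--Garaev~\cite{bg}. Your write-up is essentially the standard graph-theoretic Balog--Szemer\'edi--Gowers proof as it appears in those references: the weighted ``good/bad $b$'' selection (comparing against $Q=\sum_b\#N(b)^2$ rather than pigeonholing on degrees) is exactly the device used there to avoid the logarithmic loss, and the final double count of paths of length four through a rich midpoint is the same. So there is nothing to contrast---your proof matches what the cited sources do.
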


Next, we will introduce some notation to describe the additive and multiplicative energy of discretized sets. Recall that if $A,B$ are subsets of an Abelian group (with group operation $+$), we define the additive energy 
\[
E(A,B) = \#\{(a,a',b,b')\in A^2\times B^2\colon a+b = a'+b'\}=\sum_{a\in A}\sum_{b\in B}\#\big( (a+B)\cap (b+A)\big).
\]
The following result is a simple consequence of the Cauchy-Schwarz inequality; a proof can be found in \cite[Corollary 2.10]{tv}. 
\begin{lemma}\label{energyCauchySchwartz}
Let $A,B$ be finite subsets of an Abelian group. Then 
\[
E(A,B)\leq E(A,A)^{1/2}E(B,B)^{1/2}.
\]
\end{lemma}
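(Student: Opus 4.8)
The plan is to rewrite the additive energy as an inner product of two ``difference representation'' functions and then apply Lemma \ref{energyCauchySchwartz}'s namesake inequality. The key observation is that $a+b=a'+b'$ holds if and only if $a-a'=b'-b$. So, letting $t$ range over the group, I would set $f(t)=\#\{(a,a')\in A\times A : a-a'=t\}$ and $g(t)=\#\{(b,b')\in B\times B : b'-b=t\}$; both functions are supported on the finite sets $A-A$ and $B-B$ respectively. Every quadruple $(a,a',b,b')$ counted by $E(A,B)$ corresponds, via $t:=a-a'=b'-b$, to a choice of a pair counted by $f(t)$ together with a pair counted by $g(t)$, and this correspondence is a bijection once $t$ is summed over. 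Hence $E(A,B)=\sum_t f(t)g(t)$.

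Next I would apply the Cauchy-Schwarz inequality to this finite sum, obtaining $E(A,B)=\sum_t f(t)g(t)\leq \bigl(\sum_t f(t)^2\bigr)^{1/2}\bigl(\sum_t g(t)^2\bigr)^{1/2}$. It then remains only to identify the two factors. Expanding the square, $\sum_t f(t)^2=\#\{(a_1,a_2,a_3,a_4)\in A^4 : a_1-a_2=a_3-a_4\}$, and rearranging the defining constraint to $a_1+a_4=a_3+a_2$ shows that this count is exactly $E(A,A)$. The identical computation gives $\sum_t g(t)^2=E(B,B)$; the slight asymmetry in the definition of $g$ (using $b'-b$ rather than $b-b'$) is immaterial, since relabeling the four summation variables does not change the value of $E(B,B)$. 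Substituting these two identities yields $E(A,B)\leq E(A,A)^{1/2}E(B,B)^{1/2}$, which is the claim.

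There is essentially no obstacle in this argument: the only points requiring a moment of care are the verification that passing to the variable $t$ counts each additive-energy quadruple exactly once (so that the identity $E(A,B)=\sum_t f(t)g(t)$ holds with equality), and the remark that $f$ and $g$ have finite support, which legitimizes both the sum and the use of Cauchy-Schwarz. An alternative route would write $E(A,B)=\sum_x r(x)^2$ with $r(x)=\#\{(a,b)\in A\times B : a+b=x\}$ and apply Cauchy-Schwarz after splitting $r(x)^2$ differently, but the difference-function formulation above makes the identification of the two factors as $E(A,A)$ and $E(B,B)$ the most transparent.
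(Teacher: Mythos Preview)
Your proof is correct and is precisely the standard Cauchy--Schwarz argument the paper has in mind: the paper does not spell out a proof but simply notes that the inequality is ``a simple consequence of the Cauchy--Schwarz inequality'' and refers to \cite[Corollary 2.10]{tv}, whose proof is exactly the representation-function identity $E(A,B)=\sum_t f(t)g(t)$ followed by Cauchy--Schwarz that you wrote out.
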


We use the notation $N_{\delta}(X)$ to mean the $\delta$-neighborhood of $X$. That is, 
\[
N_{\delta}(X) = \{y\in\RR^n \colon \inf_{x\in X} |x-y| \leq \delta\},
\]
where the ambient space $\RR^n$ should be clear from context.

If $A\subset\RR$ and $\delta>0$, we define $A_{\delta^+} = N_{\delta}(A)\cap(\delta\ZZ)$ and $A_{\delta^\times} = N_{2\delta}(A)\cap2^{\delta\ZZ}$; note that with this definition, for each $x\in A$ there is a point $y\in A_{\delta^+}$ with $|x-y|\leq\delta$. Similarly, since $\frac{d}{dx} 2^x<4$ for $x\in [1,2]$, every point $x\in[1,2]$ has distance less than $2\delta$ from $2^{\delta\ZZ}$; thus if $A\subset[1,2]$, then for each $x\in A$ there is a point $y\in A_{\delta^\times}$ with $|x-y|\leq 2\delta.$ In practice, we will always have $A,B\subset[1,2]$.

$A_{\delta^+}$ (resp. $A_{\delta^\times}$) is a subset of the Abelian group $\delta\ZZ$ (resp. $2^{\delta\ZZ}$). We define
\[
E_+^\delta(A,B) = E(A_{\delta^+}, B_{\delta^+}),\quad E_\times^\delta(A,B) = E(A_{\delta^\times}, B_{\delta^\times}).
\]


We conclude this section by stating the precise variant of Theorem \ref{GKZTheorem} that we will use to prove Proposition \ref{discretizedFurstenbergWithNonConcentrationProp}.

\begin{proposition}\label{modifiedGKZ}
For each $0<\alpha<1$ and $\eps>0$, the there exists $\delta_0>0$ so that the following holds for all $0<\delta\leq\delta_0$. Let $K_1,K_2,K_3,K_4\geq 1$, and let $A\subset[1,2]$ be a $\delta$-separated set satisfying the following properties
\begin{enumerate}
\item\label{itm:1} $\#(A) \geq K_1^{-1}\delta^{-\alpha}$.
\item\label{itm:2} If $J$ is an interval of length at least $\delta$, then $\#(A\cap J) \leq K_2|J|^{\alpha}\delta^{-\alpha}$.
\item\label{itm:3} $\mathcal{E}_{\rho}(A-A) \leq K_3 \mathcal{E}_{\rho}(A)$ for all $\delta\leq\rho\leq 1$.
\item\label{itm:4} $E_{\times}^\delta(A,A)\geq K_4^{-1}(\mathcal{E}_{\delta}(A))^3$.
\end{enumerate}
Then
\begin{equation}\label{conclusionGKZ}
K_1^6 K_2^{6+\alpha} K_3^{2(9+4\alpha)} K_4^{4(2+\alpha)} \geq  \delta^{-\alpha(1-\alpha)+\eps}.
\end{equation}
\end{proposition}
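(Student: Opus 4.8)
The plan is to adapt the discretized sum--product argument of Guth, Katz, and the second author \cite{gkz}, carrying along the explicit dependence on $K_1,\dots,K_4$ at every step. The first step is to extract a point--line configuration from the energy hypothesis (\ref{itm:4}). By dyadic pigeonholing on the definition of multiplicative energy, there is a scale $\mu\gtrsim K_4^{-1}\#A$ and a $\delta$-separated family $T$ of $\gtrsim K_4^{-1}\#A$ ratios $t$ (up to factors of $\delta^{-\eps}$), each with $\#(A\cap t^{-1}A)\gtrsim\mu$; here one uses that $A$ is $\delta$-separated in $[1,2]$, so $\mathcal{E}_\delta(A)\sim\#A$, and that $A-A$ is small by (\ref{itm:3}), which prevents the ratio set from spreading out. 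If at some stage one needs a subset carrying genuine multiplicative structure rather than merely many popular fibers, Lemma \ref{bsg} can be applied inside the group $2^{\delta\ZZ}$, at the cost of a further fixed power of $K_4$. On the additive side, I would combine (\ref{itm:3}) with Pl\"unnecke--Ruzsa, applied scale by scale to the $\rho$-discretizations of $A$ for $\rho\in[\delta,1]$, to obtain the analogous multiscale small-difference-set bound --- with $K_3$ replaced by a fixed power of itself --- for iterated and twisted differences such as $A-A-A$ and $A-tA$ with $t\in[1/2,2]$.

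The second step is the Elekes-type incidence count. For each $t\in T$, the points $\{(b,tb)\colon b\in A\cap t^{-1}A\}$ lie within $O(\delta)$ of the grid $A\times A$ and on the line of slope $t$ (a pencil of lines passing, after extension, through the origin, which is harmlessly far from every grid point), so we obtain $\gtrsim K_4^{-1}\#A$ lines, each incident to $\gtrsim\mu\gtrsim K_4^{-1}\#A$ of the $\sim(\#A)^2$ grid points. The essential input now is that the multiscale difference-set bounds from the first step force both the point set (a refinement of $A\times A$) and the pencil of lines to satisfy the non-concentration (``Wolff''-type) hypotheses demanded by the discretized incidence estimate behind \cite{gkz}, with constants polynomial in $K_2$ and $K_3$. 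Verifying these hypotheses quantitatively --- while making sure that the refinements forced on us (by the pigeonholing above, and by any use of Lemma \ref{bsg}) have not already destroyed the non-concentration we need downstream --- is, to my mind, the main obstacle, and where the bulk of the technical work lies; it is essentially the issue flagged in the introduction, namely that not every path between the two non-concentration conditions is efficient.

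Finally, I would compare the lower bound on the incidence count coming from the construction with the upper bound supplied by the \cite{gkz} incidence estimate, cancel the common powers of $\#A$ (which lies between $K_1^{-1}\delta^{-\alpha}$ and $K_2\delta^{-\alpha}$ by (\ref{itm:1})--(\ref{itm:2})), and solve for $\delta$. The surviving powers of $K_1,\dots,K_4$ then assemble into the left-hand side of \eqref{conclusionGKZ}, while the right-hand side $\delta^{-\alpha(1-\alpha)+\eps}$ records precisely the margin by which the discretized incidence bound improves on the trivial count for a product of $\delta$-balls obeying the $(\delta,\alpha)$ non-concentration condition. The explicit exponents $6$, $6+\alpha$, $2(9+4\alpha)$, $4(2+\alpha)$ should fall out of this bookkeeping once Lemma \ref{bsg}, Pl\"unnecke--Ruzsa, and the \cite{gkz} incidence estimate are each invoked with their sharp exponents.
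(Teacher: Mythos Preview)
Your proposal has a genuine structural gap: you invoke ``the discretized incidence estimate behind \cite{gkz}'' as a black box, but no such estimate appears in \cite{gkz}. That paper proves the sum--product bound directly, via the quotient set
\[
B=\Big\{\frac{a_1-a_2}{a_3-a_4}\colon a_i\in A_1,\ |a_3-a_4|>\delta^\gamma\Big\}
\]
and a gap/dense dichotomy for $B\cap[0,1]$ (their Lemma~4.1), together with bounds on dilated sums $d_1A+d_2A$ and their iterates (their Lemmas~3.6 and 3.7). The paper here follows exactly that route, merely tracking the $K_i$ through the analogues of those lemmas and then optimising the free parameters $s,\gamma$. In the present paper the logical direction is the reverse of what you propose: the discretized incidence theorem (Proposition~\ref{discretizedFurstenbergWithNonConcentrationProp}) is \emph{deduced from} Proposition~\ref{modifiedGKZ}, so an Elekes-style argument that feeds an incidence bound back into the proof of Proposition~\ref{modifiedGKZ} would be circular unless you supply an independent incidence theorem with quantitatively sharp constants --- and none is available.

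There is also a specific false step in your first paragraph. You claim that Pl\"unnecke--Ruzsa applied to hypothesis~(\ref{itm:3}) yields control on twisted differences $A-tA$ for $t\in[1/2,2]$. This is not true in general: if $A$ is (a $\delta$-discretisation of) an arithmetic progression then $\mathcal{E}_\rho(A-A)\sim\mathcal{E}_\rho(A)$ at every scale, yet $\mathcal{E}_\delta(A-tA)$ is of order $(\#A)^2$ for a generic $t$. In \cite{gkz} the bounds on $d_1A+d_2A$ are obtained not from Pl\"unnecke--Ruzsa alone but by combining the multiscale additive hypothesis with the multiplicative-energy hypothesis, via a careful choice of the dilation factors $d_i=a_i-b_i$ coming from elements of $A$ itself; this coupling is precisely what your outline is missing. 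Getting the claimed exponents $6,\ 6+\alpha,\ 2(9+4\alpha),\ 4(2+\alpha)$ requires reproducing those two lemmas with explicit $K_i$-dependence and then choosing $s$ and $\gamma$ as in the appendix, not running an Elekes count.
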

The proof of Proposition \ref{modifiedGKZ} is nearly identical to the proof of Theorem 1.1 from~\cite{gkz}. We will briefly remark on the differences between the two theorems. First, in \cite{gkz}, the authors set $K_3=K_4,$ and they did not track the explicit dependence on $K_1$ and $K_2$. Note that when $\alpha=1/2$, $K_1\sim K_2\sim 1$ and $K_3=K_4=K$, then \eqref{conclusionGKZ} asserts that $K\gtrsim \delta^{-1/128+\eps}$; this recovers the result from \cite{gkz}, with the improvement observed by Victor Lie.

Second, the statement of Theorem 1.1 from~\cite{gkz} replaces Item \ref{itm:3} with the weaker requirement that $\mathcal{E}_{\delta}(A-A) \leq K_3 \mathcal{E}_{\delta}(A),$ and replaces Item \ref{itm:4} with the stronger requirement that $\mathcal{E}_{\delta}(A.A)\leq K_4\mathcal{E}_{\delta}(A)$. The authors in \cite{gkz} begin by finding a large subset $A'\subset A$ that satisfies Item \ref{itm:3} (in the form stated above), and then use their upper bound on $\mathcal{E}_{\delta}(A.A)$ (which also holds with $A'$ in place of $A$) to obtain Item \ref{itm:4}, with $A'$ in place of $A$. In our setting, we are not allowed to replace $A$ with a subset $A'\subset A$, since Item \ref{itm:4} might fail for $A'$. This is why we impose the stronger condition in Item \ref{itm:3} at the outset. we will sketch a proof of Proposition \ref{modifiedGKZ} in Appendix \ref{modifiedGKZAppendix}.

\subsection{Moran-regular trees}
In this section, $k$ and $T$ will be large integers; later, we will fix a small number $\eps$ and set $k=\lfloor 1/\eps\rfloor,$ and we will select $T$ to be an integer so that $2^{-k(T+1)}\leq\delta< 2^{-kT}.$ With this choice of $k$ and $T$, and once $\eps$ is a fixed constant, we will have $2^k\leq 2^{1/\eps}\sim 1$ and $2^T\leq\delta^{-2\eps}$. 

\begin{definition}
A $2^{T}$-adic interval is a subset of $[0,1)$ of the form $[s 2^{-jT}, (s+1)2^{-jT})$ for some $j\geq 0$ and $0\leq s\leq 2^{T}$.  A $2^{T}$-adic square is a Cartesian product of $2^{T}$-adic intervals. In what follows, we will simply use ``square'' to refer to a $2^{T}$-adic square. 
\end{definition}

Note that two distinct squares are either disjoint, or one contains the other. If $S$ is a square of side-length $2^{-jT}$ that contains a square $S'$ of side-length $2^{-(j+1)T}$, then we say $S'$ is a child of $S$. 

\begin{definition}
If $A\subset[0,1)^d$ and if $S$ is a square, we say $S$ meets $A$ if $S\cap A\neq\emptyset$. We say $A$ is discretized at scale $2^{-kT}$ if every square of side-length $2^{-kT}$ intersects at most one point from $A$.
\end{definition}

\begin{definition}
If  $A\subset[0,1)^d$ is discretized at scale $2^{-kT}$, and if $N_1,\ldots,N_k$ are integers, we say that $A$ is Moran-regular with branching $(N_0,\ldots,N_{k-1})$ if for each $j=0,\ldots,k-1$, every square $S$ of side-length $2^{-jT}$ that meets $A$ has exactly $N_j$ children $S'\subset S$ that meet $A$. 
\end{definition}

The next lemma says that every discretized set has a large subset that is Moran-regular. See \cite[Lemma 3.4]{kelsh} or \cite[Lemma 2.2]{shm} for a detailed proof.
\begin{lemma}\label{moranRegularLem}
Let $A\subset[0,1)^d$ be discretized at scale $2^{-kT}$. Then there exists a set $A'\subset A$ and integers $N_0,\ldots,N_{k-1}$ so that $\#A'\geq (2dT)^{-k}(\#A)$, and $A'$ is Moran-regular with branching $(N_0,\ldots,N_{k-1})$. Furthermore, we can ensure that each of the integers $N_0,\ldots,N_{k-1}$ are of the form $2^z$ for some non-negative integer $z$. 
\end{lemma}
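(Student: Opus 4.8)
The plan is to run a greedy/pigeonhole tree-pruning argument, one level at a time, from the coarsest scale down to scale $2^{-kT}$. Since $A$ is discretized at scale $2^{-kT}$, we may identify $A$ with the set of side-length-$2^{-kT}$ squares it meets, and think of the containment structure of $2^T$-adic squares as a rooted tree of depth $k$: the root is $[0,1)^d$, and the children of a square of side-length $2^{-jT}$ are the $2^{dT}$ squares of side-length $2^{-(j+1)T}$ it contains. A square is ``active'' if it meets the (current) point set. What I want is a subtree in which every active square at level $j$ has the \emph{same} number $N_j$ of active children, with each $N_j$ a power of $2$, while losing only a factor $(4T)^{-k}$ in the number of leaves.

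The key step is the single-level pruning. Suppose at level $j$ we have a set of active squares, each of which meets the current point set. Each such square $S$ has some number $n_S \in \{1,\dots,2^{dT}\}$ of children meeting the point set. Round each $n_S$ down to the nearest power of two, $\tilde n_S = 2^{\lfloor \log_2 n_S\rfloor}$, losing at most a factor $2$ in children per square; then pigeonhole: there are at most $\lceil \log_2(2^{dT}+1)\rceil \le dT+1 \le 2T$ (for $T$ large, absorbing the dimension constant $d\in\{1,2\}$) possible values of $\tilde n_S$, so some value $N_j$ is attained by at least a $1/(2T)$-fraction of the active squares at level $j$ — and here one should count with multiplicity by the number of leaves below each square, so that retaining this fraction of level-$j$ squares retains at least a $1/(2T)$-fraction of the leaves passing through level $j$. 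Discard every active square at level $j$ not achieving the value $N_j$, and for each surviving square discard all but exactly $N_j$ of its active children. Combining the rounding loss and the pigeonhole loss, this single level costs a factor at most $\tfrac{1}{2}\cdot\tfrac{1}{2T}\ge (4T)^{-1}$ in the leaf count.

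Iterating this over the $k$ levels $j=0,1,\dots,k-1$ produces a subtree that is Moran-regular with branching $(N_0,\dots,N_{k-1})$, each $N_j$ a power of $2$, and whose leaf set $A'\subset A$ satisfies $\#A' \ge (4T)^{-k}\#A$ (the bookkeeping for ``leaf count retained'' multiplies cleanly across levels because at each stage we keep a fixed fraction of the surviving leaves). Reading off the surviving leaves as a subset of $A$ gives the claim. The only mildly delicate point — and the one I would be most careful about — is the bookkeeping: one must do the pigeonholing weighted by descendant-leaf-count rather than by raw square-count, and must be sure the discarding of non-$N_j$ squares at level $j$ does not retroactively spoil regularity at earlier levels (it does not, since removing whole subtrees only decreases the number of active children of ancestors that were already counted — so in fact one should phrase the induction as processing levels top-down and never revisiting, which makes the earlier $N_i$'s remain correct for the pruned tree automatically). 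None of this is hard, but it is the place where an imprecise argument would leak an extra factor or an off-by-one in the count of dyadic scales.
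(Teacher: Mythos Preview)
The paper does not supply its own proof of this lemma; it simply cites \cite[Lemma~3.4]{kelsh} and \cite[Lemma~2.2]{shm}. Your overall strategy---dyadic pigeonholing on the branching numbers, one level at a time---is precisely the standard one used there.

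There is, however, a genuine gap in the direction of iteration. You process levels top-down ($j=0,1,\dots,k-1$), and at stage $j$ you discard every active level-$j$ square $S$ whose rounded child count $\tilde n_S$ differs from the pigeonholed value $N_j$. But each discarded level-$j$ square is a child of some level-$(j-1)$ square that was, in the previous step, trimmed to have \emph{exactly} $N_{j-1}$ active children; removing it leaves that parent with strictly fewer than $N_{j-1}$ active children, destroying the regularity you just established. Your parenthetical justification (``removing whole subtrees only decreases the number of active children of ancestors\dots which makes the earlier $N_i$'s remain correct'') is exactly backwards: Moran-regularity demands \emph{equality}, and decreasing an exact count breaks it. The fix is simply to reverse direction and iterate bottom-up, from $j=k-1$ down to $j=0$. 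At stage $j$, every surviving level-$(j+1)$ square then carries exactly $N_{j+1}\cdots N_{k-1}$ points by induction, so discarding a level-$j$ square removes an entire already-regular subtree, while levels $0,\dots,j-1$ have not yet been processed and hence cannot be spoiled. Your per-level loss calculation goes through unchanged under this reordering. (One minor arithmetic slip: the inequality $dT+1\le 2T$ fails when $d=2$, so the stated constant $(4T)^{-k}$ is only literally correct for $d=1$---which is in fact the only case the paper ever uses.)
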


We can also refine a collection of discretized sets, so that they are all Moran regular with the same branching.
 
\begin{corollary}\label{moranRegularCor}
Let $\mathcal{A}$ be a collection of subsets of $[0,1)^d$, each of which is discretized at scale $2^{-kT}$. Then there exist integers $N_0,\ldots,N_{k-1}$; a set $\mathcal{A}'\subset\mathcal{A}$; and for each $A\in \mathcal{A}'$, a set $A'\subset A$ so that
\[
\sum_{A\in\mathcal{A}'}\#A' \geq (2dT)^{-2k} \sum_{A\in\mathcal{A}}\#A,
\]
and for each $A\in\mathcal{A}'$, $A'$ is Moran-regular with branching $(N_0,\ldots,N_{k-1})$.
\end{corollary}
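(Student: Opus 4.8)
The plan is to apply Lemma \ref{moranRegularLem} to each member of $\mathcal{A}$ separately, and then pigeonhole to force the branching vectors to coincide.

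First, for each $A\in\mathcal{A}$ I would invoke Lemma \ref{moranRegularLem} to produce a subset $A_\ast\subset A$ together with a branching tuple $(N_0^A,\ldots,N_{k-1}^A)$, each $N_j^A$ a power of $2$, such that $\#A_\ast\geq (4T)^{-k}\#A$ and $A_\ast$ is Moran-regular with branching $(N_0^A,\ldots,N_{k-1}^A)$. Summing over $A\in\mathcal{A}$ gives
\[
\sum_{A\in\mathcal{A}}\#A_\ast \;\geq\; (4T)^{-k}\sum_{A\in\mathcal{A}}\#A.
\]

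Next, I would partition $\mathcal{A}$ according to the value of the tuple $(N_0^A,\ldots,N_{k-1}^A)$. Since every square of side-length $2^{-jT}$ in $[0,1)^d$ has exactly $2^{dT}$ children, each coordinate $N_j^A$ lies in $\{2^0,2^1,\ldots,2^{dT}\}$, so there are at most $(dT+1)^k\leq(4T)^k$ distinct tuples (here we use that the ambient dimension $d$ is small, so that $dT+1\leq 4T$). By the pigeonhole principle, there is a single tuple $(N_0,\ldots,N_{k-1})$ for which the subcollection $\mathcal{A}'=\{A\in\mathcal{A}:(N_j^A)_j=(N_j)_j\}$ satisfies
\[
\sum_{A\in\mathcal{A}'}\#A_\ast \;\geq\; (4T)^{-k}\sum_{A\in\mathcal{A}}\#A_\ast \;\geq\; (4T)^{-2k}\sum_{A\in\mathcal{A}}\#A .
\]
Setting $A'=A_\ast$ for each $A\in\mathcal{A}'$ (so $A'\subset A$) then yields the claim, since each such $A'$ is Moran-regular with the common branching $(N_0,\ldots,N_{k-1})$.

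The only step needing any care is the counting of branching tuples: this is precisely where the final clause of Lemma \ref{moranRegularLem}, guaranteeing that each $N_j$ is a power of $2$, is used. Without it there would be $\sim 2^{dT}$ rather than $\sim dT$ choices for each coordinate, and the pigeonhole loss would swamp the factor $(4T)^{-2k}$; with it, the loss is at most $(4T)^{-k}$, matching the loss already incurred in Lemma \ref{moranRegularLem}. Everything else is a direct application of Lemma \ref{moranRegularLem} and an averaging argument, so I do not anticipate a genuine obstacle here.
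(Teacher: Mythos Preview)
Your proposal is correct and follows essentially the same approach as the paper: apply Lemma \ref{moranRegularLem} to each $A\in\mathcal{A}$, then pigeonhole on the resulting branching tuple, using that each $N_j^A$ is a power of $2$ to bound the number of tuples by $(T+1)^k$ (the paper implicitly takes $d=1$, while you track the dimension as $(dT+1)^k\leq(4T)^k$). Your closing remark on why the power-of-$2$ clause is essential is a nice addition but not needed for the argument itself.
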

\begin{proof}
Apply Lemma \ref{moranRegularLem} to each $A\in\mathcal{A}$, and let $A'\subset A$ and $(N^A_0,\ldots, N^A_{k-1})$ be the output of that lemma. Since each $N^A_j$ is a power of two between $2^0$ and $2^T$, there are $(T+1)^k$ possible values for $(N^A_0,\ldots, N^A_{k-1})$. The result now follows from pigeonholing. 
\end{proof}

Finally, we observe that if $A\subset[0,1)^d$ is discretized at scale $2^{-kT}$ and Moran-regular with branching $(N_0,\ldots,N_{k-1})$, then for each square $S$ of side-length $2^{-jT}$, either $S\cap A=\emptyset,$ or $\#(S\cap A)=\prod_{i=j}^{k-1}N_i.$


\section{A discretized incidence theorem}
In this section, we will prove a discretized incidence theorem for collections of points and lines satisfying certain non-concentration assumptions. In Section \ref{reductionSection} we will show that the incidence theorem proved in this section implies Proposition \ref{discretizedFurstenbergProp}.

Throughout this section, we fix $0<\alpha<1$. We will let $\eps>0$ and $\delta>0$ be small parameters. If $A$ and $B$ are quantities that depend on $\delta$, we write $A\lessapprox B$ if there are constants $C=C(\alpha)$ and $\delta_0=\delta_0(\alpha,\eps)>0$ so that $A\leq \delta^{-C\eps}B$ for all $0<\delta\leq\delta_0$. If $A\lessapprox B$ and $B\lessapprox A$, we write $A\approx B$. 

Let $P\subset [0,1]^2$ be a set of $\delta$-separated points and let $L$ be a set of $\delta$-separated lines in $\RR^2$ (as before, all lines will be of the form $y=mx+b$, with $(m,b)\in [0,1]^2$). We say that a point $p$ is \emph{incident} (or $\delta$-incident, if we wish to emphasize the role of $\delta$) to a line $l$ if $p\in N_{\delta}(l).$ We write $I_{\delta}(P,L)$ to denote the set of pairs $(p,l)\in P\times L$ of incident points. If $I\subset P\times L$, define $I(p) = \{l\in L\colon (p,l)\in I\}$, and define $I(l) = \{p\in P\colon (p,l)\in I\}.$

Our task is to bound the number of incidences between $P$ and $L$, under the following non-concentration conditions. Let $V\geq 1,$ and let $I\subset I_{\delta}(P,L)$ be a set of incidences with the following properties.  
\begin{enumerate}[(A)]
\item\label{itemA}
For each line $l\in L$, 
\begin{equation}\label{eachLineContainsManyPoints}
\# I(l) \geq  \delta^{-\alpha+\eps}V.
\end{equation}

\item \label{itemB}
For each $l\in L$ and each ball $B$ of radius $r\geq\delta$,
\begin{equation}\label{nonConcentrationPtsOnLine}
\# (I(l)\cap B) \leq  \delta^{-\eps}(r/\delta)^\alpha V.
\end{equation}

\item \label{itemC}
For each ball $B$ of radius $r\geq\delta$, 
\begin{equation}\label{nonConcentrationlines}
\#(B\cap \iota(L))\leq  \delta^{-\eps} (r/\delta)^{2\alpha}.
\end{equation}

\item \label{itemD}
Each point $p\in P$ is incident to approximately an average number of lines. More precisely,
\begin{equation}\label{eachPointMeetsManyLines}
\delta^{\eps} \frac{(\#I)}{(\# P)} \leq \#I(p) \leq \delta^{-\eps} \frac{(\#I)}{(\#P)}.
\end{equation}

\item\label{robustTransversalityOfLinesThruPtItem} 
For each point $p\in P$, the lines incident to $p$ point in a set of directions that satisfies a ``robust transversality'' condition. Specifically, for each unit vector $v\in S^1$, we have
\begin{equation}\label{robustTransversalityOfLinesThruPt}
\# \{ l \in I(p)\colon  \angle(l, v) \leq \delta^{\eps} \} \leq \frac{1}{2} \#I(p) .
\end{equation}

\item\label{nonConcentrationLinesThroughAPointItem} 
For each point $p\in P$, the lines incident to $p$ point in a set of directions that forms a $(\delta,\alpha)_1$ set. Specifically, for each unit vector $v\in S^1$ and each $r\geq \delta$, we have
\begin{equation}\label{nonConcentrationLinesThroughAPoint}
\# \{ l \in I(p)\colon \angle(l, v) \leq r \} \leq \delta^{-\eps}(r/\delta)^\alpha.
\end{equation}
\end{enumerate}

\begin{proposition}\label{discretizedFurstenbergWithNonConcentrationProp}
Let $P\subset [0,1]^2$ be a set of points, let $L$ be a set of lines, let $V\geq 1$, and let $I\subset I_{\delta}(P,L)$. Suppose that $P,L,$ and $I$ satisfy Properties (\ref{itemA})-(\ref{nonConcentrationLinesThroughAPointItem}).  Then
\begin{equation}\label{lowerBdCardP}
\# P \gtrapprox \delta^{-2\alpha-c(\alpha)}V(\delta^{2\alpha}\#L),\qquad c(\alpha) = \frac{\alpha(1-\alpha)}{6(155+68\alpha)}.
\end{equation}
\end{proposition}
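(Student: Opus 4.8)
The plan is to run a standard incidence-to-sum-product reduction, but organized so that the "quality" parameters $K_1,\dots,K_4$ fed into Proposition \ref{modifiedGKZ} are controlled by $V$, $\#L$, and $\#P$. First I would fix a "popular point" $p_0\in P$ and, using property (D), observe that $p_0$ is incident to $\approx \#I/\#P$ lines; since $\#I \geq \#L\cdot\delta^{-\alpha+\eps}V$ by (A), this is a large collection of lines through a common point. After translating $p_0$ to the origin (which does not affect $\delta$-incidences up to constants), the lines through $p_0$ are determined by their directions, and property (F) says these directions form a $(\delta,\alpha)_1$-set while property (E) gives robust transversality. This is the set I want to feed into the sum-product machine: let $A\subset[1,2]$ be (a rescaled copy of) the set of slopes of lines in $I(p_0)$, so $\#A \approx \#I/\#P$ and $A$ satisfies Items \ref{itm:1} and \ref{itm:2} of Proposition \ref{modifiedGKZ} with $K_1 \approx \delta^{\alpha}\#A$ and $K_2\approx 1$.

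The heart of the argument is to establish Items \ref{itm:3} and \ref{itm:4} — the additive non-expansion $\mathcal{E}_\rho(A-A)\lessapprox K_3\mathcal{E}_\rho(A)$ and the multiplicative energy lower bound $E^\delta_\times(A,A)\gtrapprox K_4^{-1}(\mathcal{E}_\delta(A))^3$ — and to argue that if $\#P$ is too small then $K_3$ and $K_4$ must be small (i.e. close to $1$), contradicting \eqref{conclusionGKZ}. The multiplicative structure comes from incidences: pairs of points on a common line $l$, together with another popular point, generate multiplicative relations among slopes via the projective/cross-ratio structure of pencils of lines — this is the classical "sum-product from incidences" mechanism of Bourgain–Katz–Tao \cite{bkt}. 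Concretely, I would count quadruples: use property (A) to get many points on each line, property (B) to keep those points non-concentrated, and property (C) to keep the lines themselves non-concentrated, then a Cauchy–Schwarz / double-counting argument (likely invoking Lemma \ref{bsg} to pass from partial difference sets to full difference sets, and Lemma \ref{energyCauchySchwartz} to relate additive and multiplicative energies) shows that either $\#P$ is large or the slope set $A$ has small additive doubling and large multiplicative energy. The Moran-regular tree structure (Lemma \ref{moranRegularLem}, Corollary \ref{moranRegularCor}) is what lets me upgrade the single-scale estimate $\mathcal{E}_\delta(A-A)\lessapprox K_3\mathcal{E}_\delta(A)$ to the all-scales version in Item \ref{itm:3}: refine $A$ and all the auxiliary sets to be Moran-regular with common branching, so that scale-$\rho$ covering numbers are products of branching numbers and the single-scale bound propagates down the tree.

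Once Items \ref{itm:1}–\ref{itm:4} hold with $K_1\approx \delta^\alpha\#A$, $K_2\approx 1$, and $K_3,K_4$ bounded by appropriate powers of $\#P/(\delta^{-2\alpha}\#L \cdot V^{\text{something}})$, plugging into \eqref{conclusionGKZ} gives an inequality of the form $(\delta^\alpha\#A)^6 \cdot (\#P\text{-stuff})^{2(9+4\alpha)+4(2+\alpha)} \gtrapprox \delta^{-\alpha(1-\alpha)}$; solving for $\#P$ and unwinding $\#A\approx \#I/\#P \gtrapprox \delta^{-\alpha}V\#L/\#P$ produces the bound \eqref{lowerBdCardP}, with the exponent $6(155+68\alpha)$ in $c(\alpha)$ arising precisely as (a small multiple of) the sum $6 + (6+\alpha) + 2(9+4\alpha) + 4(2+\alpha)$ of exponents in Proposition \ref{modifiedGKZ}, after bookkeeping the powers of $\delta$. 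The main obstacle I anticipate is the bookkeeping in the middle step: cleanly extracting the multiplicative energy and the additive non-expansion of the slope set $A$ from the incidence data requires choosing the popular point(s), the popular line(s), and the refinements in the right order, and tracking how each pigeonholing and each application of Balog–Szemerédi–Gowers degrades the parameters — it is easy to lose a power of $\#P$ or $V$ and wreck the final exponent. Getting the dependence on $V$ correct (it should enter only through $\#A$, hence only through $K_1^6$, which is why $V$ appears with exponent $c(\alpha)/\alpha = 1/(6(155+68\alpha))$ in the conclusion) is the delicate point that distinguishes this from the $\alpha=1/2$, $V=1$ case.
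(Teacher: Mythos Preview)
Your outline has the right high-level shape (incidence $\to$ sum-product via BSG and Moran regularity $\to$ Proposition \ref{modifiedGKZ}), but the specific reduction you sketch does not produce the needed input, and the gap is exactly at the step you flag as the ``heart of the argument.''

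Taking $A$ to be the slope set of the pencil through a single popular point $p_0$ gives you Items \ref{itm:1} and \ref{itm:2} via Property (F), but nothing in your sketch forces \emph{both} small additive doubling \emph{and} large multiplicative energy on that one set. The paper's construction is quite different: one first fixes a \emph{line} $l_0$, counts triples $(p,l,q)$ with $p\in I(l_0)$, and then applies H\"older (fourth powers) to find \emph{four} points $p_1,p_2,p_3,p_4\in I(l_0)$ at mutual distance $\sim d$ (after a two-ends reduction to pin down $d$), each joined by a line of $L$ to a common $q$ ranging over a large set $Q$. A fifth point $p_5$ is then selected on $l_3$, strictly between $p_3$ and $q$. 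After an anisotropic rescaling by $d^{-1}$ and a projective transformation sending $l_0$ to the line at infinity, the four pencils through $p_1,p_4,p_2,p_5$ become, respectively, vertical lines ($x$-intercept set $X$), horizontal lines ($y$-intercept set $Y$), lines of slope $1$ (set $Z$), and lines through the origin (slope set $W$). The set $A$ fed into Proposition \ref{modifiedGKZ} is a BSG-refinement of $X_{\delta^\dag}$: each $q\in Q$ yields a pair $(x,y)\in X\times Y$ with $x+y$ close to $Z$ (hence small partial difference set, then Lemma \ref{bsg} plus Moran-regularization for Item \ref{itm:3}) and simultaneously $y/x$ close to $W$ (hence large multiplicative energy via Lemma \ref{energyCauchySchwartz}, Item \ref{itm:4}). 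The four-pencil projective normalization is what separates the additive and multiplicative pieces; a single pencil cannot do this.

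Your exponent bookkeeping is also off. The denominator $6(155+68\alpha)$ is not the sum $6+(6+\alpha)+2(9+4\alpha)+4(2+\alpha)$ of the exponents in \eqref{conclusionGKZ}. After the fourth-power H\"older step and the fifth-power loss in Lemma \ref{bsg}, the quantities $K_1,K_3,K_4$ themselves carry powers $(\#P)^7$, $(\#P)^{36}$, $(\#P)^{30}$ respectively, and it is $6\cdot 7+(18+8\alpha)\cdot 36+(8+4\alpha)\cdot 30=6(155+68\alpha)$ that appears. Likewise $V$ enters $K_1,K_3,K_4$ with exponents $4,18,15$, not only through $K_1$; the resulting $V$-exponent $\frac{2(39+17\alpha)}{155+68\alpha}$ is then checked against $c(\alpha)/\alpha$.
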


\begin{remark}
There are a number of (explicit and implicit) parameters used in this statement which may make it difficult to parse for readers unfamiliar with the $\lessapprox$ notation. The parameter $\alpha$ defines the `dimension' of the sets one is interested in, and this is the first parameter to be specified. The key point of the proposition is that there is a number $c(\alpha)$ --- the `dimension gain' --- relating the size of $P$ to $L$. The parameter $\eps$ can be arbitrarily small and quantifies the error which is acceptable in the hypothesis, such that the conclusion holds up to a related error. Given a value of $\eps$, there exists a number $\delta_0(\alpha,\eps)$ such that the statement holds for all $0<\delta\leq\delta_0$. Implicit in the $\lessapprox$ notation is a constant $C$, and the claim is that the inequality \label{lowerBdCardP} holds up to a multiplicative factor of $\leq\delta^{C\eps}$. Crucially, $C$ is a constant which may depend on $\alpha$ but does not depend on $\eps$.
\end{remark}

\begin{remark}\label{epsVsLessapprox}
Note that Proposition \ref{discretizedFurstenbergWithNonConcentrationProp} as stated above formally implies the (superficially) stronger statement, where the inequalities ``$A\leq \delta^{-\eps}B$'' in \eqref{eachLineContainsManyPoints}, \eqref{nonConcentrationPtsOnLine}, \eqref{nonConcentrationlines}, \eqref{eachPointMeetsManyLines}, and \eqref{nonConcentrationLinesThroughAPoint} are replaced by ``$A\lessapprox B$.''
\end{remark}

\begin{remark}\label{V1Remark}
The most interesting case is when $V=1$ and $\#L\approx\delta^{-2\alpha}$. However, in order to use Proposition \ref{discretizedFurstenbergWithNonConcentrationProp} to obtain bounds on the size of $(\alpha,2\alpha)$-Furstenberg sets, we will need to apply a two-ends reduction and rescaling to ensure that \eqref{robustTransversalityOfLinesThruPt} holds. Because of this, we must also consider other values of $V\geq 1$, and sets of lines with $\#L<\delta^{-2\alpha}$. However, if $(P,L,I)$ is an incidence arrangement that satisfies Properties (\ref{itemA})-(\ref{nonConcentrationLinesThroughAPointItem}) for some $V\geq 1$, then we can construct a new incidence arrangement $(P',L',I')$ as follows: $P'$ is obtained by randomly and independently selecting each point $p\in P$ with probability $\delta^{\eps}V^{-1}$; $I' = I\cap (P'\times L)$; and $L'=L$. Then with high probability, $(P',L',I')$ will satisfy Properties (\ref{itemA})-(\ref{nonConcentrationLinesThroughAPointItem}) with $V=1$, and the conclusion \eqref{lowerBdCardP} for $P'$ with $V=1$ implies  \eqref{lowerBdCardP} for $P$ (with the original value of $V$). Thus when proving Proposition \ref{discretizedFurstenbergWithNonConcentrationProp}, we may assume that $V=1$. 
\end{remark}

The remainder of this section will be devoted to the proof of Proposition \ref{discretizedFurstenbergWithNonConcentrationProp}; as noted in Remark \ref{V1Remark}, we may suppose that $V=1$.  In the arguments that follow, we will make statements of the following type: If $l\in L$, then 
\begin{equation}\label{exampleOfGtrapprox}
\#\{(p,q)\in I(l)^2\colon  \operatorname{dist}(p,q)\approx 1\}\gtrapprox\delta^{-2\alpha}.
\end{equation}
What this means is that if the constant $C_1$ is chosen appropriately (possibly depending on $\alpha$, but not depending on $\eps$ or $\delta$), then there exists a constant $C_2$ (again, possibly depending on $\alpha$, but not depending on $\eps$ or $\delta$) so that for all $\delta$ sufficiently small we have 
\begin{equation}\label{exampleOfGtrapproxC1C2}
\#\{(p,q)\in I(l)^2,\ \operatorname{dist}(p,q)\geq\delta^{C_1\eps}\}\geq\delta^{C_2\eps}\delta^{-2\alpha}.
\end{equation}
For this example, \eqref{exampleOfGtrapproxC1C2} (and hence \eqref{exampleOfGtrapprox}) is true with $C_1 = 3/\alpha$ and $C_2 = 3$. Indeed, by Property (\ref{itemA}), $\#I(l)\geq \delta^{\eps-\alpha}.$ Once $p\in I(l)$ has been chosen, there are $\geq \delta^{\eps-\alpha}$ elements $q\in I(l)$. By Property (\ref{itemB}) with $r = \delta^{C_1\eps}$ and with $C_1$ as above, at most $\delta^{-\eps}(r/\delta)^\alpha \leq \delta^{2\eps-\alpha}$ such $q$ satisfy $\operatorname{dist}(p,q)\leq \delta^{C_1\eps}$. Thus if $\delta>0$ is sufficiently small (in this case $\delta<2^{-1/\eps}$ will suffice), then there are at least $\frac{1}{2}\delta^{2\eps-2\alpha}\geq \delta^{C_2\eps}\delta^{-2\alpha}$ pairs $(p,q)\in I(l)^2$ with $\operatorname{dist}(p,q)\geq\delta^{C_1\eps}$. Arguments of this type will be used frequently in our proof without further comment. 

In the arguments that follow, we will construct a sequence of increasingly complicated configurations of points and lines. The main steps of this construction are illustrated in Figure \ref{countpl}. An important tool for constructing these configurations is H\"older's inequality. For example, suppose $I'\subset I$. Define $m(l) = \#I'(l)$, let $k\geq 2$, and let $k'$ be the conjugate exponent to $k$. Then by H\"older's inequality we have
\[
(\#I')^k = \Big( \sum_{l\in L}1\cdot m(l) \Big)^k \leq (\#L)^{k/k'}\Big(\sum_{l\in L} (m(l))^k\Big),
\]
and thus
\[
\#\{(p_1,\ldots,p_k,l)\in P^k\times L\colon (p_i,l)\in I'\ \textrm{for}\ i=1,\ldots,k\}\geq (\#I')^k(\#L)^{-k/k'}.
\]
Arguments of this type will be used in our proof without further comment.

\begin{figure}
     \centering
     \begin{subfigure}[b]{0.45\textwidth}
         \centering
        \begin{tikzpicture}

\filldraw [gray] (0,0) circle (3pt);
\node[text width=0.2cm] at (0,-0.4) {$p$};

\filldraw [gray] (0.3,3*0.3/1.2) circle (3pt);

\filldraw [gray] (0.5,3*0.5/1.2) circle (3pt);

\filldraw [gray] (0.8,3*0.8/1.2) circle (3pt);

\filldraw [gray] (1.2,3) circle (3pt);

\node[text width=0.2cm] at (1.2,3.4) {$q$};

\draw (0,0) -- (1.2,3);

\draw[densely dashed] (-1,0) -- (1,0);

\end{tikzpicture}
         \caption{Step 1}
         \label{fig:step1}
     \end{subfigure}
     \hfill
     \begin{subfigure}[b]{0.45\textwidth}
         \centering
        \begin{tikzpicture}

\filldraw [gray] (0,0) circle (3pt);
\node[text width=0.2cm] at (0,-0.4) {$p_1$};

\filldraw [gray] (0.3,3*0.3/1.2) circle (3pt);

\filldraw [gray] (0.5,3*0.5/1.2) circle (3pt);

\filldraw [gray] (0.8,3*0.8/1.2) circle (3pt);

\filldraw [gray] (1.2,3) circle (3pt);

\node[text width=0.2cm] at (1.2,3.4) {$q$};

\filldraw [gray] (1,0) circle (3pt);
\node[text width=0.2cm] at (1,-0.4) {$p_2$};

\filldraw [gray] (1.04,15*0.04) circle (3pt);

\filldraw [gray] (1.1,15*0.1) circle (3pt);

\filldraw [gray] (2.2,0) circle (3pt);

\node[text width=0.2cm] at (2.2,-0.4) {$p_3$};

\filldraw [gray] (2,3*0.2) circle (3pt);

\filldraw [gray] (1.65,3*0.55) circle (3pt);

\filldraw [gray] (3,0) circle (3pt);
\node[text width=0.2cm] at (3,-0.4) {$p_4$};

\filldraw [gray] (2.6,3*0.4/1.8) circle (3pt);

\filldraw [gray] (2.2,3*0.8/1.8) circle (3pt);

\filldraw [gray] (1.6,3*1.4/1.8) circle (3pt);

\draw (0,0) -- (1.2,3);
\draw (1,0) -- (1.2,3);
\draw (2.2,0) -- (1.2,3);
\draw (3,0) -- (1.2,3);

\draw[densely dashed] (-0.5,0) -- (3.5,0);

\end{tikzpicture}
         \caption{Step 2}
         \label{fig:step2}
     \end{subfigure}

     \centering
     \begin{subfigure}[b]{0.45\textwidth}
         \centering
         \begin{tikzpicture}

\filldraw [gray] (0,0) circle (3pt);
\node[text width=0.2cm] at (0,-0.4) {$p_1$};

\filldraw [gray] (1.2,3) circle (3pt);

\node[text width=0.2cm] at (1.2,3.4) {$q$};

\filldraw [gray] (1,0) circle (3pt);
\node[text width=0.2cm] at (1,-0.4) {$p_2$};

\filldraw [gray] (2.2,0) circle (3pt);

\filldraw [gray] (3,0) circle (3pt);
\node[text width=0.2cm] at (3,-0.4) {$p_4$};

\filldraw [gray] (1.65,3*0.55) circle (3pt);
\node[text width=0.2cm] at (1.4,1.25) {$p_5$};

\draw (0,0) -- (1.2,3);
\draw (1,0) -- (1.2,3);
\draw (2.2,0) -- (1.2,3);
\draw (3,0) -- (1.2,3);

\draw[densely dashed] (-0.5,0) -- (3.5,0);

\end{tikzpicture}
         \caption{Step 3}
         \label{fig:step3}
     \end{subfigure}
     \hfill
     \begin{subfigure}[b]{0.45\textwidth}
         \centering
        \begin{tikzpicture}

\filldraw [gray] (0,0) circle (3pt);
\node[text width=0.2cm] at (0.3,-0.3) {$p_5$};

\filldraw [gray] (1.5,2.5) circle (3pt);

\node[text width=0.2cm] at (1.8,2.2) {$q$};

\draw[densely dashed] (0,-0.5) -- (0,3);
\draw[densely dashed] (-0.5,0) -- (3,0);
\draw (0,0) -- (1.5,2.5);
\draw (-0.5,2.5) -- (1.5,2.5);
\draw (1.5,-0.5) -- (1.5,2.5);
\draw (-0.5,0.5) -- (1.5,2.5);

\end{tikzpicture}
         \caption{Step 4}
         \label{fig:step4}
     \end{subfigure}
        \caption{The main steps in translating point-line incidences to a sum-product statement}
        \label{countpl}
\end{figure}
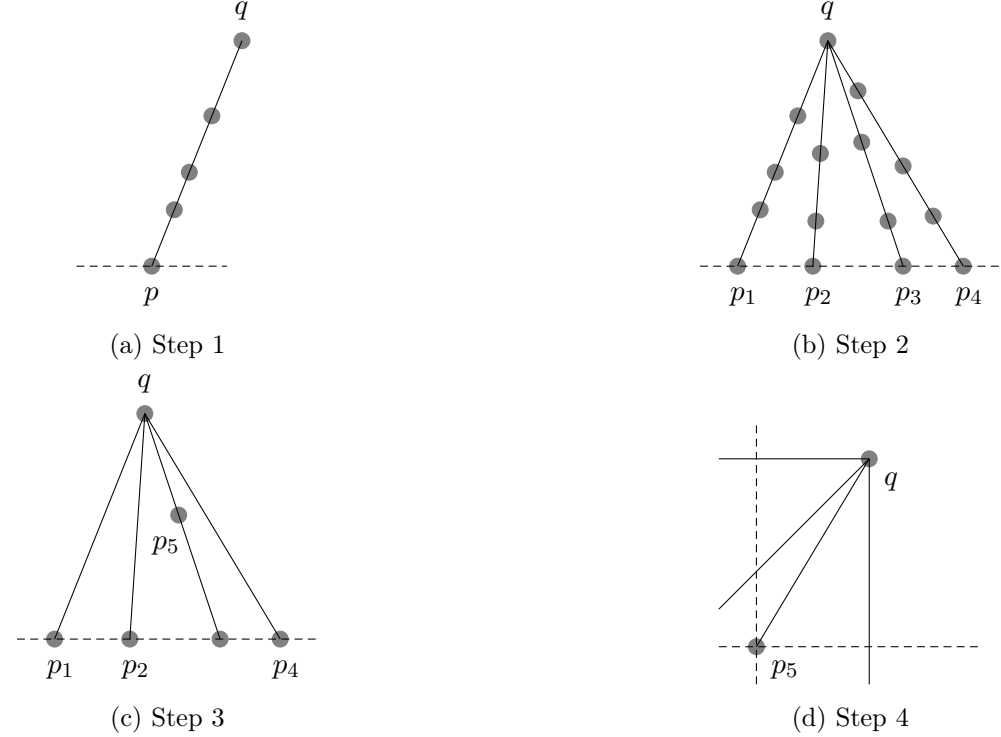

\begin{proof}[Proof of Proposition \ref{discretizedFurstenbergWithNonConcentrationProp}]
\item 
\paragraph{Counting triples relative to a fixed line.}
Let $l_0\in L$. We claim that 
\begin{equation}\label{triplesplq}
\begin{split}
\#& \{(p,l,q)\in I(l_0) \times L \times P : (p,l),(q,l) \in I; \operatorname{dist}(p, q)\gtrapprox1; \angle(l_0,l)\geq\delta^\eps;\\
& \textrm{there are}\ \gtrapprox \delta^{-\alpha}\ \textrm{points incident to}\ l\ \textrm{between}\ p\ \textrm{and}\ q\} \\
&\gtrapprox \Big(\delta^{-\alpha}\Big)\Big(\delta^{-\alpha}(\#L)(\#P)^{-1}\Big)\Big(\delta^{-\alpha}\Big)\\
&= \delta^{-3\alpha}(\#L)(\#P)^{-1}.
\end{split}
\end{equation}
The set in \eqref{triplesplq} is constructed as follows: first we select a point $p\in I(l_0)$; by Property (\ref{itemA}) there are $\gtrapprox \delta^{-\alpha}$ choices for $p$. Then we select a line $l\in I(p)$ with $\angle(l_0,l)\geq\delta^\eps$; by Properties (\ref{itemD}) and (\ref{robustTransversalityOfLinesThruPtItem}), there are $\gtrapprox \frac{(\#I)}{(\#P)}\approx \delta^{-\alpha}(\#L)(\#P)^{-1}$ choices for $l$. Finally we select a point $q\in I(l)$ with $\operatorname{dist}(p, q)\gtrapprox1$ and for which there are $\gtrapprox \delta^{-\alpha}\ \textrm{points incident to}\ l\ \textrm{between}\ p\ \textrm{and}\ q$. By Properties (\ref{itemA}) and (\ref{itemB}), there are $\gtrapprox \delta^{-\alpha}$ choices for $q$.

For each $q\in P$, let $L(q)\subset L$ be the set of lines $l$ so that there exists $p\in P$ such that $(p,l,q)$ belongs to the set defined in $\eqref{triplesplq}$. For each $l\in L(q)$, there are $\lessapprox 1$ points $p\in P$ so that $(p,l,q)$ belongs to the set defined in $\eqref{triplesplq}$ (indeed, such a point $p$ must be contained in $N_{\delta}(l)\cap N_{\delta}(l_0)$, and these two lines intersect at angle $\approx 1$).

\paragraph{Applying a two-ends reduction.}
Next we will apply a two-ends reduction; see \cite{TaoBlog} for an introduction to this topic. Fix $q\in P$, and for each $r\in[\delta,\pi]$ and $v\in S^1$, define 
\[
f(r,v)=r^{-\eps}\#\{l\in L(q)\colon \angle(l, v) \leq r\}.
\] 
This function is upper semi-continuous (indeed, for every $(r,v)\in [\delta,\pi]\times S^1$, there exists $\tau>0$ so that $f(r',v')\leq f(r,v)$ for all $|r-r'|<\tau,\ |v-v'|<\tau$), and thus achieves its maximum on the compact set $[\delta,\pi]\times S^1$. Let $(r_q,v_q)$ be a pair that achieves this maximum.

Define 
\[
L'(q) = \{l\in L(q)\colon \angle(l_i,v_q)\leq r_q\}.
\] 
Since $f(\pi, v)=\pi^{-\eps}\#L(q)$ for every $v\in S^1$, we have $f(r_q,v_q)\geq f(\pi, v_q) = \pi^{-\eps}\#L(q)$, and hence $\#L'(q)\geq (r_q/\pi)^{\eps}\#L(q)\geq (\delta/\pi)^{\eps}\#L(q)\gtrapprox\#L(q)$. 

We claim that for each $\delta\leq r\leq r_q$, there at most $(r/r_q)^\eps (\# L'(q))^4$ quadruples $(l_1,\ldots,l_4) \in (L'(q))^4$ so that $\angle(l_3, l_4)<r$. Indeed; once $l_1,l_2,l_3$ have been selected, $l_4$ must satisfy $\angle(l_3,l_4)\leq r$, and hence the number of such $l_4$ is at most $r^\eps f(r,\operatorname{dir}(l_3))$. Since $f(r,\operatorname{dir}(l_3))\leq f(r_q,v_q)$, we have $r^\eps f(r,\operatorname{dir}(l_3))\leq r^{\eps}f(r_q,v_q)= (r/r_q)^\eps \#L'(q)$. 

An identical argument for each pair of distinct indices $i\neq j$ shows there there are at most $\binom{4}{2} (r/r_q)^\eps (\# L'(q))^4$ quadruples $(l_1,\ldots,l_4) \in (L'(q))^4$ so that $\angle(l_i,l_j)<r$ for some $i\neq j$. This implies that if an estimate of the form
\begin{equation}\label{quadLineEq}
\#\{(x,l_1,\ldots,l_4)\in X\times (L'(q))^4\}\gtrapprox (\#X)(\#L(q))^4
\end{equation}
holds for the point $q$ (and some set $X$), then the same inequality (up to a constant multiplicative factor) must hold with the added condition that each distinct pair $l_i,l_j$ on the LHS of \eqref{quadLineEq} make angle $\gtrapprox r_q$.

\paragraph{Applying H\"older's inequality.}
Each point $q\in P$ has an associated number $r_q$ given by the two-ends reduction argument above. By applying dyadic pigeonholing to \eqref{triplesplq}, we can refine the set on the LHS of the inequality by a factor of $1/|\log\delta|$, and find a number $d_{l_0}$ such that $r_q\sim d_{l_0}$ for every point $q\in P$ that contributes to at least one triple from \eqref{triplesplq}. Note that inequality \eqref{triplesplq} remains true for this (slightly) smaller set of triples. Denote this refined set of triples by $T$.

Define 
\begin{equation*}
\begin{split}
\mathcal{N}_{l_0} = &\{(p_1,\ldots,p_4, l_1,\ldots,l_4, q)\in I(l_0)^4\times L^4\times P :\\
&   (p_i,l_i),(q,l_i)\in I;\ \dist(p_i,q) \approx 1;\ \angle(l_0,l_i)\geq\delta^{\eps},\ i=1,\ldots,4;\\
	& \qquad\operatorname{dist}(p_1,p_2), \operatorname{dist}(p_2,p_3), \operatorname{dist}(p_3,p_4) \leq d_{l_0} ;\\
	& \qquad\operatorname{dist}(p_1,p_2), \operatorname{dist}(p_2,p_3), \operatorname{dist}(p_3,p_4) \gtrapprox d_{l_0}\},
\end{split}
\end{equation*}
where in the above set we chose the labeling so that $p_1,\ldots,p_4$ appear in that order along $l_0$. We will show that \eqref{triplesplq} and the two-ends reduction argument described above implies a lower bound on the cardinality of this set.  We begin with the observation that
\[
\#T = \sum_{q\in P} \#\{(p,l)\in P\times L : (p,l,q)\in T\} \gtrapprox \delta^{-3\alpha}(\#L)(\#P)^{-1}.
\]
Applying H\"older's inequality to this sum gives
\begin{equation*}
\begin{split}
\#\{(p_1,\ldots,p_4, l_1,\ldots,l_4, q)\in I(l_0)^4\times L^4\times P :&(p_i,l_i,q) \in T, i=1,\ldots,4\} \gtrapprox \delta^{-12\alpha}(\#L)^4(\#P)^{-7}.
\end{split}
\end{equation*}
The quantity on the left-hand side of this inequality is at most $\lessapprox \delta^{-4\alpha}\#P$, so this estimate is of the form \eqref{quadLineEq}. Thus, thanks to our dyadic pigeonhole refinement and the two-ends reduction argument described above, we have
\begin{equation}\label{numberOf9Tuples}
\mathcal{N}_{l_0} \gtrapprox  \delta^{-12\alpha}(\#L)^4(\#P)^{-7}.
\end{equation}

Note that if $(p_1,\ldots,p_4, l_1,\ldots,l_4, q)\in \mathcal{N}_{l_0}$, then $\operatorname{dist}(l_0,q)\approx 1$. By \eqref{triplesplq}, we also have that there are $\gtrapprox \delta^{-\alpha}$ points $p_5\in P$ with $(p_5,l_3)\in I$ so that $p_5$ is between $p_3$ and $q$ on the line $l_3$. Thus by Property (\ref{itemB})  and the fact that $\angle(l_0,l_3)\approx 1$, we have that there are $\gtrapprox \delta^{-\alpha} $ points $p_5\in P$ so that (i) $(p_5,l_3)\in I$, (ii) $\operatorname{dist}(l_0,p_5)\approx 1$, (iii) $\operatorname{dist}(p_5, q)\approx 1$, and (iv) $p_5$ is contained inside the triangle $\Delta_{p_1,p_4,q}$ spanned by $p_1,p_4,q$. Items (i)--(iv) also imply that 
\begin{equation}
\begin{split}
\operatorname{dist}(q, \overline{p_1p_5})\gtrapprox d_{l_0},\quad\textrm{and}\quad
\operatorname{dist}(q, \overline{p_4p_5})\gtrapprox d_{l_0},
\end{split}
\end{equation} 
where $\overline{p_1p_5}$ denotes the line spanned by $p_1$ and $p_5$, and similarly for $\overline{p_4p_5}$. Thus, 
\begin{equation}\label{numberOf10Tuples}
\begin{split}
	&\#\{(p_1,\ldots,p_4,  l_1,\ldots,l_4, q,p_5)\in \mathcal{N}_{l_0}\times P : (p_5,l_3)\in I;\ \operatorname{dist}(l_0,p_5)\approx 1;\\
	&\qquad \operatorname{dist}(p_5,q)\approx 1;\ p_5\in \Delta_{p_1,p_4,q};\ \operatorname{dist}(p, \overline{p_1p_5})\gtrapprox d_{l_0};\ \operatorname{dist}(p, \overline{p_4p_5})\gtrapprox d_{l_0}\}\\
	& \gtrapprox \delta^{-13\alpha}(\#L)^4(\#P)^{-7}.
\end{split}
\end{equation}

\paragraph{Summing over the lines.}

Recall that the set described in \eqref{numberOf10Tuples} depends on the choice of line $l_0$. Let $\mathcal{D}$ be the disjoint union of such sets over all lines $l_0\in L$, i.e.
\begin{equation}\label{defnD}
\begin{split}
\mathcal{D} = \{&(l_0,p_1,\ldots,p_4, l_1,\ldots,l_4, q,p_5)\colon (p_1,\ldots,p_4, l_1,\ldots,l_4, q,p_5)\in \mathcal{N}_{l_0}\times P;\ (p_5,l_3)\in I;\\ 
&\operatorname{dist}(l_0,p_5)\approx 1;\ \operatorname{dist}(p_5,q)\approx 1;\  p_5\in \Delta_{p_1,p_4,q};\ 
\operatorname{dist}(p, \overline{p_1p_5})\gtrapprox d_{l_0};\ \operatorname{dist}(p, \overline{p_4p_5})\gtrapprox d_{l_0}\}.
\end{split}
\end{equation}
By \eqref{numberOf10Tuples}, we have 
\[
\#\mathcal{D}\gtrapprox \delta^{-13\alpha}(\#L)^5(\#P)^{-7}. 
\]

After dyadic pigeonholing, we can suppose there is a number $d$ so that so that $d_{l_0}\sim d$ for a $|\log\delta|^{-1}$ fraction of the tuples in $\mathcal{D}$. Abusing notation, we will continue to refer to this refined set as $\mathcal{D}$.

\paragraph{Fixing four special points.}

Next, we will use pigeonholing to select a choice of $(l_0,p_1,p_2,p_4,p_5)$ that occur in many tuples from $\mathcal{D}$. There are $\#L$ choices for $l_0$. Once $l_0$ has been specified, there are at most $\delta^{-\alpha-\eps}$ choices for $p_1$. Next, there are $\lessapprox d^\alpha\delta^{-\alpha-\eps}$ choices for each of $p_2$ and $p_4$. Finally, there are $\leq \#P$ choices for $p_5$. Thus there is a choice of $(l_0,p_1,p_2,p_4,p_5)$ that appears in 
\[
\gtrapprox \frac{ \delta^{-13\alpha}(\#L)^5(\#P)^{-7} }
{(\#L)(\delta^{-\alpha})(d^\alpha\delta^{-\alpha})^2(\#P) }= d^{-2\alpha}\delta^{-10\alpha}(\#L)^4(\#P)^{-8}
\]
tuples from $\#\mathcal{D}$. Fix this choice of $l_0,p_1,p_2,p_4,p_5$. Note that the point $p_3$ and line $l_3$ are determined up to multiplicity $\approx 1$. 

Write $p\sim q$ if there is a line $l\in L$ with $(p,l),(q,l)\in I$ (note that earlier we defined $X\sim Y$ if $X\lesssim Y$ and $Y\lesssim X$; it will always be clear from context which meaning of `$\sim$' is intended). Since $\operatorname{dist}(p_i,q)\approx 1$ for $i=1,2,4,5$, if $p_i\sim q$ then there are $\approx 1$ lines $l_i\in L$ incident to both $p_i$ and $q$. Thus if we define
\begin{equation*}
\begin{split}
Q_0 = \{ q \in P \colon p_1\sim q;\ p_2\sim & q;\ p_4\sim q;\ p_5\sim q;\ \operatorname{dist}(l_0, q)\approx 1;\ \operatorname{dist}(p_5,q)\approx 1;\\
& p_5\in\Delta_{p_1,p_4,q};\ \operatorname{dist}(q, \overline{p_1p_5})\gtrapprox d;\ \operatorname{dist}(q, \overline{p_4p_5})\gtrapprox d \},
\end{split}
\end{equation*}
then

\begin{equation}\label{goodPointsQ}
\#Q_0 \gtrapprox  d^{-2\alpha}\delta^{-10\alpha}(\#L)^4(\#P)^{-8}.
\end{equation}

Observe that each $q\in Q_0$ is incident a line, which in turn is incident to $p_1$ (i.e. each $q\in Q_0$ is in the `bush' of $p_1$). Thus by dyadic pigeonholing, we can select a set $Q\subset Q_0$ with $\#Q\gtrapprox \#Q_0$, so that each of the lines described above are incident to the same number of points from $Q$. This property will be needed to establish \eqref{consequenceOfEvenOnLine} below.

\paragraph{Rescaling.}
If $R$ is a rectangle, we define $\operatorname{dir}(R)\in S^1$ to be the direction of the major axis of $R$. In practice, we will always consider rectangles with eccentricity $>1$, so this is well defined.

Let $\mathcal{R}$ be a maximal set of rectangles of dimensions $4\times 100d$ that intersect $Q$; are centered at $p_1$; and point in $d$-separated directions. Since $|p_1-p_4|<4d$, we have $p_4\in R$ for each $R\in\mathcal{R}$. Since $p_5\in \Delta_{p_1,p_4,q}$ and rectangles are convex, each of these rectangles contains $p_5$. But since $\operatorname{dist}(\ell_0, p_5)\gtrapprox 1$, we have $\#\mathcal{R}\lessapprox 1$. In particular, we can select a rectangle $R\in\mathcal{R}$ with $\#(R\cap Q)\gtrapprox\#Q$. Fix this choice of $R$ and define $Q_1=Q\cap R.$

If $l\in L$ and $q\in Q_1$ with $(p_1,l),(q,l)\in I$, then since $\operatorname{dist}(p_1,q)\approx 1$ and since $p_1$ and $q$ are both contained in $R$, there is a constant $C_0=C_0(\alpha)$ (which depends on the implicit constants in the above quasi-inequalities) so that if $\delta>0$ is selected sufficiently small, then 
\begin{equation}\label{angleLR}
\angle(l,\operatorname{dir}(R))\leq \delta^{-C_0\eps}d.
\end{equation}
The same holds true with $p_2$ and $p_4$ in place of $p_1$. Crucially, the same also holds true with $p_5$ in place of $p_1$---since $p_5\in\Delta_{p_1,p_4,q}$, we must have $p_5\in R$. Define 
\[
L(p_1)=\{l\in I(p_1)\colon  \angle(l,\operatorname{dir}(R))\leq \delta^{-C_0\eps}d\}.
\]
Recall that $l\in I(p_1)$ implies that $p_1\in N_{\delta}(l)$. Abusing notation (but hopefully not the sensibilities of our readers), we shall slightly perturb each line $l\in L(p_1)$ so that $p_1\in l$. If $q\in Q_1$ was a point with $q\in N_\delta(l)$ prior to this perturbation, then $q\in N_{2\delta}(l)$ after the perturbation; this is harmless for our arguments. Define $L(p_2),L(p_4)$, and $L(p_5)$ similarly. By Properties (\ref{itemA}) and (\ref{itemD}), the cardinality of these sets is $\lessapprox \delta^{-\alpha} (\#L)(\#P)^{-1}.$

Observe that for each $q\in Q_1$, there are $\approx 1$ lines $l_1\in L(p_1)$ (resp. $l_2\in L(p_2)$ and $l_4\in L(p_4)$) that are incident to $q$. The converse, however is not true; the intersection of the $\delta$-neighborhoods of three lines $l_1,l_2,l_4$ is contained in a rectangle of dimensions roughly $\delta\times \delta/d$; the long axis of this rectangle makes an angle $\lesssim d$ with the long axis of $R$ (since both of these rectangles have eccentricity about $d^{-1}$, the two rectangles are as ``parallel'' as possible). By Property (\ref{itemB}), there could be as many as $\lessapprox  d^{-\alpha}$ points $q\in Q_1$ in this rectangle that are incident to $l_1,l_2$ and $l_4$. Let $Q_2\subset Q_1$, with 
\begin{equation}\label{lowerBdOnQ}
\#Q_2\gtrapprox   d^{\alpha}(\#Q_1)\gtrapprox d^{-\alpha}\delta^{-10\alpha}(\#L)^4(\#P)^{-8},
\end{equation}
so that each rectangle with dimensions $\delta\times \delta/d$ that is parallel to $R$ contains at most one point from $Q_2$.

Let $\tilde\delta = \delta/d$. Let $\tilde T\colon\RR^2\to\RR^2$ be the composition of a translation, rotation, and dilation that first sends $p_1$ to $(0,0)$, then sends $p_4$ to a point on the $x$-axis, and finally stretches $\RR^2$ in the $x$-direction, so that the image of $p_4$ becomes $(1,0)$. The image of $R$ under $\tilde T$ is a rectangle of dimensions $\approx 1 \times 1$. Let $\tilde p_i$ and $\tilde L(p_i)$, $i=1,2,4,5$ be the image of $p_i$ and $L(p_i)$, respectively, under $\tilde T$. Let $\tilde Q_2$ be the image of $Q_2$ under $T$.

We have that $\tilde p_1 = (0,0)$, $\tilde p_4=(1,0)$, and $\tilde p_2$ is contained in the $\delta$-neighborhood of the $x$-axis and has distance $\approx 1$ from $\tilde p_1$ and $\tilde p_4$. Abusing notation, we will replace $\tilde p_2$ with a point on the $x$-axis, and modify the set $\tilde L(p_2)$ accordingly.

The purpose of this transformation is that now the points in $\tilde Q_2$ are $\tilde\delta$-separated, and each $q\in \tilde Q_2$ satisfies
\begin{equation}\label{qFarFromBadLines}
\operatorname{dist}(q, x\textrm{-axis})\approx 1,\quad \operatorname{dist}(q,  \overline{\tilde p_1\tilde p_5})\approx 1,\quad \operatorname{dist}(q,  \overline{\tilde p_4\tilde p_5})\approx 1.
\end{equation}

Finally, the lines in $\tilde L(p_1)$ satisfy a re-scaled version of the non-concentration estimate, Property (\ref{nonConcentrationLinesThroughAPointItem}). For each unit vector $v\in S^1$ and each $r\geq \tilde\delta$, we have
\begin{equation}\label{nonConcentrationLinesThroughAPointRescaled}
\# \{ l \in \tilde L(p_1), \angle(l, v) \leq r \} \lessapprox (dr/\delta)^\alpha=(r/\tilde\delta)^{\alpha},
\end{equation}
and similarly for $\tilde L(p_2),$ $\tilde L(p_4),$ $\tilde L(p_5).$   

\paragraph{Applying a projective transformation.}
Next, we apply a projective transformation $T^\dag$ that sends the $x$-axis to the line at infinity. We choose this transformation so that lines containing $(0,0)$ (and in particular, lines in $\tilde L(p_1)$) are mapped to vertical lines; lines containing $(1,0)$ (and in particular, lines in $\tilde L(p_4)$) are mapped to horizontal lines; and lines containing $\tilde p_2$ (and in particular, lines in $\tilde L(p_2)$) are mapped to lines with slope $-1$.

Recall that $\tilde p_1,$ $\tilde p_2$, and $\tilde p_4$ have separation $\approx 1$; $\tilde Q_2$ is contained in a ball centered at the origin of radius $\approx 1$; and
by \eqref{qFarFromBadLines}, $\operatorname{dist}(\tilde Q_2,\ x\textrm{-axis})\approx 1$. Thus $T^\dag$ does minimal damage to $\tilde Q_2$---the set remains contained in a ball centered at the origin of radius $\approx 1$; it is $\approx\tilde\delta$ separated; and if $l\in\tilde L(p_1)$ was $\tilde\delta$-incident to some $q\in\tilde Q_2$, then the image of $l$ remains $\approx\tilde\delta$-incident to the image of $q$.

Recall from \eqref{defnD} that $\operatorname{dist}(p_5, \overline{p_1p_4})\approx 1$, and thus $\operatorname{dist}(\tilde p_5, x\textrm{-axis})\approx 1$. This means that the image of $\tilde p_5$ under $T^\dag$ has distance $\lessapprox 1$ from the origin, so we can apply a translation and suppose that the image of $\tilde p_5$ is $(0,0)$. After this translation we still have that the image of $\tilde Q_2$ is contained in a ball centered at the origin of radius $\approx 1$. Abusing notation, we will continue to refer to this transformation as $T^\dag$. 

Let $L(p_1)^\dag$, $L(p_2)^\dag$, $L(p_4)^\dag$, $L(p_5)^\dag$ denote the images of $\tilde L(p_1)$, $\tilde L(p_2)$, $\tilde L(p_4)$, $\tilde L(p_5)$ under $T^\dag$. $T^\dag$ maps the line $\overline{\tilde p_1,\tilde p_5}$ to the $y$-axis, and the line $\overline{\tilde p_4,\tilde p_5}$ to the $x$-axis. Thus by \eqref{qFarFromBadLines}, each $q^\dag \in T^\dag(\tilde Q_2)$ has distance $\approx 1$ from the $x$ and $y$ axes. For convenience, we will suppose that each $q^\dag \in T^\dag(\tilde Q_2)$ is contained in the first quadrant; this can always be achieved by applying a suitable reflection and refining $\tilde Q_2$ by a factor of at most 4. Next, by dyadic pigeonholing, we can suppose there are numbers $M_x,M_y\approx 1$ so that after refining $\tilde Q_2$ by at most $|\log\delta|^{-1}$, we have that $T^\dag(\tilde Q_2)$ is contained in $[M_x, 2M_x]\times [M_y, 2M_y]$. Thus if we compose $T^\dag$ with the anisotropic scaling $S(x,y)=(M_x^{-1}x, M_y^{-1}y)$ and define $Q^\dag = S\circ T^\dag(\tilde Q \cap [M_x, 2M_x]\times [M_y, 2M_y])$, then $Q^\dag\subset [1,2]^2$. Abusing notation slightly, we will replace $L(p_1)^\dag$, $L(p_2)^\dag$, $L(p_4)^\dag$, $L(p_5)^\dag$ with their image under $S$. 

Finally, we will choose a value of $\delta^\dag\approx\tilde\delta$ so that after further refining $Q^\dag$ by a $\approx 1$ factor, we can ensure that $Q^\dag$ is $\delta^\dag$-separated, and if $l\in\tilde L(p_1)$ was $\tilde\delta$-incident to some $q\in\mathcal{Q}_2$, then the image of $l$ will be $\delta^\dag$-incident to the image of $q.$

Recall that the lines in $L(p_1)^\dag$ are parallel to the $y$-axis; let $X\subset\RR$ be the set of $x$-intercepts of these lines. The lines in $L(p_4)^\dag$ are parallel to the $x$-axis; let $Y\subset\RR$ be the set of $y$-intercepts of these lines. The lines in $L(p_2)^\dag$ are of the form $x+y = t$; let $Z\subset\RR^2$ be the set of $t$-values for such lines. The lines in $L(p_5)\dag$ are of the form $y = mx$; let $W$ be the set of slopes (i.e. $m$ values) of these lines. 

The sets $X, Y, Z, W$ are $\approx \delta^\dag$ separated. Since each $q^\dag\in Q_2^\dag$ is contained in $[1,2]^2$, each line $l\in L(p_1)^\dag$ that is incident to at least one point of $Q_2^\dag$ must have $x$-intercept in $[1,2]$. Similarly, each line $l\in L(p_4)^\dag$ that is incident to at least one point of $Q_2^\dag$ must have $y$-intercept in $[1,2]$. Thus we can suppose that $X,Y\subset[1,2]$ (i.e. we can discard any elements of $X$ or $Y$ for which this condition fails, since such lines cannot be incident to any points in $Q_2^\dag$).

The angular non-concentration \eqref{nonConcentrationLinesThroughAPointRescaled} becomes a spatial non-concentration condition on the sets $X$ and $Y$. Specifically, for each interval $J\subset\RR$ we have
\begin{equation}
\begin{split}
&\#(X \cap J) \lessapprox (|J|/\delta^\dag)^{\alpha},\\
&\#(Y \cap J) \lessapprox (|J|/\delta^\dag)^{\alpha}.
\end{split}
\end{equation}
Note that the ``$\lessapprox$'' in the above inequalities represents a multiplicative factor of the form $\delta^{-C\eps}$, not $(\delta^\dag)^{-C\eps}$.
The our bounds on the size of $L(p_i)$ allow us to estimate
\begin{equation}\label{upperBdOnXandY}
\begin{split}
&\#X\lessapprox \delta^{-\alpha}(\#L)(\#P)^{-1},\\
&\#Y\lessapprox \delta^{-\alpha}(\#L)(\#P)^{-1},\\
&\#Z\lessapprox \delta^{-\alpha}(\#L)(\#P)^{-1},\\
&\#W\lessapprox \delta^{-\alpha}(\#L)(\#P)^{-1}.
\end{split}
\end{equation}

Recall that $X_{\delta^\dag} = N_{\delta^\dag}(X)\cap (\delta^\dag \ZZ)$. Let $E\subset X_{\delta^\dag}\times Y_{\delta^\dag}$, with $(x,y)\in E$ if there are points $x'\in X$, $y'\in Y$ with $|x-x'|\leq\delta^\dag,$ $|y-y'|\leq\delta^\dag,$ so that the corresponding lines $l_1\in L(p_1)^\dag$, $l_4\in L(p_4)^\dag$ with intercepts $x'$ and $y'$ are $\delta^\dag$-incident to a common point of $Q_2^\dag$. By \eqref{lowerBdOnQ},  
\[
\# E\gtrapprox  d^{-\alpha}\delta^{-10\alpha}(\#L)^4(\#P)^{-8}. 
\]

Furthermore, if $(x,y)\in E$, then the corresponding point $q\in\tilde Q_2^\dag$ is incident to a line of the form $x+y=t$, where $|t-(x-y)|\leq 4\delta^\dag$. This means that $x-y$ is an element of $\delta^\dag\ZZ$ that is contained in $N_{4\delta^\dag}(Z)$. But since $Z$ is $\approx \delta^\dag$ separated, by \eqref{upperBdOnXandY}  we have that
\[
\#(X_{\delta^\dag} \overset{E}{-} Y_{\delta^\dag})\leq 4\mathcal{E}_{\delta^\dag}(Z)\lessapprox\#Z \lessapprox \delta^{-\alpha}(\#L)(\#P)^{-1}.
\]

\paragraph{Bounding the sum-set.}
Next, we will use the Balog-Szemer\'edi-Gowers theorem to find a large set $A_0\subset X_{\delta^\dag}$ whose sum-set is small. By Lemma~\ref{bsg} there exists a set $A_0\subset X_{\delta^\dag}$ with
\begin{equation}\label{boundOnA0}
\begin{split}
\# A_0 &\gtrapprox \frac{d^{-\alpha}\delta^{-10\alpha}(\#L)^4(\#P)^{-8} }
{\delta^{-\alpha}(\#L)(\#P)^{-1} }\\
&= \Big(d^{-2\alpha}\delta^{-8\alpha}(\#L)^3(\#P)^{-7}\Big)(\delta^\dag)^{-\alpha},
\end{split}
\end{equation}
and

\begin{equation}\label{EDeltaAA}
\begin{split}
\#(A_0-A_0) &\lessapprox 
\frac{\big(\delta^{-\alpha}(\#L)(\#P)^{-1} \big)^{12}}
{\big(d^{-\alpha}\delta^{-10\alpha}(\#L)^4(\#P)^{-8}  \big)^6} (\#A_0)\\
&= \Big(d^{6\alpha}\delta^{48\alpha}(\#L)^{-12}(\#P)^{36}\Big) (\#A_0).
\end{split}
\end{equation}
For the latter inequality here we made use of the bound $\#E\lesssim (\#A_0)(\#Y)$, which is ensured by Lemma~\ref{bsg}; this is needed in order to obtain an estimate of the form $\#(A_0-A_0)\leq K\#A_0$, giving control on the growth of $A_0$ under subtraction relative to the cardinality of $A_0$ itself.

Next, we will find a large set $A\subset A_0$, so that $A-A$ has small covering number at every scale (c.f.~Item \ref{itm:3} from Proposition \ref{modifiedGKZ}). Our arguments are similar to the proof of Lemma 2.1 from \cite{gkz}. Let $k=1/\eps$ and select $T$ so that $2^{kT}\leq (\delta^\dag)^{-1}\leq 2^{k(T+1)}$. Apply Lemma \ref{moranRegularLem} to $A_0$; let $A\subset A_0$ and $(N_0,\ldots,N_{k-1})$ be the output from that lemma. $A$ continues to satisfy \eqref{boundOnA0}. We will show that for each $\delta^\dag\leq\rho\leq 1,$ we have 
\begin{equation}\label{rhoCoveringAA}
\mathcal{E}_{\rho}(A-A)\lessapprox \Big(d^{6\alpha}\delta^{48\alpha}(\#L)^{-12}(\#P)^{36}\Big) \mathcal{E}_{\rho}(A).
\end{equation}
First, we will verify \eqref{rhoCoveringAA} for $\rho$ of the form $2^{-jT},\ 1\leq j\leq k$. Since each $2^T$-adic interval of length $2^{-jT}$ contains $\prod_{i=j}^{k-1}N_i$ points from $A$, we have 
\[
\#(A-A) = \mathcal{E}_{\delta}(A-A)\gtrsim \Big(\prod_{i=j}^{k-1}N_i\Big)\mathcal{E}_{\rho}(A-A).
\]
Thus
\begin{equation*}
\mathcal{E}_{\rho}(A-A) \lesssim \Big(\prod_{i=j}^{k-1}N_i\Big)^{-1}\#(A-A)\lessapprox \Big(\prod_{i=j}^{k-1}N_i\Big)^{-1} \Big(d^{6\alpha}\delta^{48\alpha}(\#L)^{-12}(\#P)^{36}\Big) (\#A_0).
\end{equation*}
The estimate \eqref{rhoCoveringAA} now follows from the inequality $\#A\approx\#A_0$, and the observation that  $\mathcal{E}_{\rho}(A)=\big(\prod_{i=j}^{k-1}N_i\big)^{-1}(\#A)$.

Next, to verify \eqref{rhoCoveringAA} for arbitrary $\rho,$ select $1\leq j\leq k$ with $2^{-jT}\leq \rho\leq 2^{-(j-1)T}$ and let $\rho'=2^{-jT}$. Then \eqref{rhoCoveringAA} follows from the observation that $\mathcal{E}_{\rho}(A-A) \leq \mathcal{E}_{\rho'}(A-A),$ and $\mathcal{E}_{\rho'}(A)\approx \mathcal{E}_{\rho}(A)$.

\paragraph{Bounding the multiplicative energy.}
We still have that every point in $E \cap (A\times Y_{\delta^\dag})$ is $\delta^\dag$-incident to a line of the form $y = mx$, with $m\in W$. Thus by Cauchy-Schwarz, 
\[
\#\{(p,p')\in E \cap (A\times Y)\colon p,p'\ \textrm{are $\delta^\dag$-incident to a common line}\ y=mx,\ m\in W \} \gtrapprox 
\frac{\big(\#(E \cap (A\times Y))\big)^2}
{\#W}.
\]
Since $A,Y\subset [1,2]$, each pair $(p,p')$ in the above set contributes to the multiplicative energy $E^{\delta^\dag}_{\times}(A,Y^{-1})$, and thus we have
\[
E^{\delta^\dag}_{\times}(A,Y)^{-1}\gtrapprox 
\frac{\big(\#(E \cap (A\times Y))\big)^2}
{\#W}.
\]
Since $Y\subset[1,2]$ is $\approx \delta^\dag$-separated, we have $E^{\delta^\dag}_{\times}(Y^{-1},Y^{-1})\lessapprox (\#Y)^3$. By Lemma~\ref{energyCauchySchwartz}, we have
\[
E^{\delta^\dag}_{\times}(A,A)\geq 
\frac{\big(E^{\delta^\dag}_{\times}(A,Y^{-1})\big)^2}
{E^{\delta^\dag}_{\times}(Y^{-1},Y^{-1})}
\gtrapprox \frac{\big(\#(E \cap (A\times Y))\big)^4}
{(\#W)^2(\#Y)^3}.
\]
Recall that at the end of the Fixing four special points part of the argument, we applied dyadic pigeonholing to ensure that each line incident to $p_1$ was incident to the same number of points in $Q$, up to a factor of two (this was how we obtained the set $Q$ from $Q_0$). As a consequence of this property, each line that was used to define an element of $X$ is incident to $\gtrapprox (\#E)/(\#X)$ points that are in turn incident to lines that were used to define the set $Y$. Thus, we have
\begin{equation}\label{consequenceOfEvenOnLine}
\#\big(E \cap (A\times Y)\big)\gtrapprox (\# E)\frac{\# A}{\# X},
\end{equation}
and we therefore get
\begin{equation}
    \begin{split}
    E^{\delta^\dag}_{\times}(A,A) &\gtrapprox \frac{(\#E)^4(\#A)^4}
{(\#W)^2(\#X)^4(\#Y)^3}\\
&\gtrapprox \frac{(\#E)^5}
{(\#W)^2(\#X)^4(\#Y)^4} (\#A)^3\\
&\gtrapprox  \big(d^{-5\alpha} \delta^{-40\alpha}(\#L)^{10}(\#P)^{-30}\big)(\#A)^3.    
    \end{split}
\end{equation}
Note that here we used the bound $\#E \lessapprox (\#A)(\#Y)$, which holds since Lemma~\ref{bsg} ensures that $\#E \lessapprox (\#A_0)(\#Y)$ and Lemma~\ref{moranRegularLem} ensures that $\#A\gtrapprox \#A_0$.

\paragraph{Applying discretized sum-product.}
To summarize, we have a $A\subset [1,2]\cap(\delta^\dag \ZZ)$, with
\begin{equation}\label{lowerBdOnA}
\# A \gtrapprox \Big(d^{2\alpha}\delta^{8\alpha}(\#L)^{-3}(\#P)^{7}\Big)^{-1} (\delta^\dag)^{-\alpha},
\end{equation}
which satisfies the non-concentration condition
\begin{equation}\label{nonConcentrationA}
\#(A\cap J)\lessapprox |J|^{\alpha} (\delta^\dag)^{-\alpha},
\end{equation}
and
\begin{equation}\label{differenceAndEnergyBoundsA}
\begin{split}
\mathcal{E}_{\rho}(A-A)&\lessapprox
\Big(d^{6\alpha}\delta^{48\alpha}(\#L)^{-12}(\#P)^{36}\Big) \mathcal{E}_{\rho}(A),\quad \delta^\dag\leq\rho\leq 1,\\
E^{\delta^\dag}_{\times}(A,A) & \gtrapprox  \Big(d^{5\alpha} \delta^{40\alpha}(\#L)^{-10}(\#P)^{30}\Big)^{-1}(\#A)^3.
\end{split}
\end{equation}

Comparing this with Proposition \ref{modifiedGKZ} and using the bounds \eqref{lowerBdOnA}, \eqref{nonConcentrationA}, and \eqref{differenceAndEnergyBoundsA}, we conclude that

\begin{equation*}
\begin{split}
& \Big(d^{2\alpha}\delta^{8\alpha}(\#L)^{-3}(\#P)^{7}\Big)^6\\
&\cdot\Big(d^{6\alpha}\delta^{48\alpha}(\#L)^{-12}(\#P)^{36}\Big)^{18+8\alpha}\Big(d^{5\alpha} \delta^{40\alpha}(\#L)^{-10}(\#P)^{30} \Big)^{8+4\alpha} \\
&= d^{4\alpha(40+17\alpha)} \delta^{16\alpha(77+34\alpha)} (\#L)^{-2(157+68\alpha)} (\#P)^{6(155+68\alpha)}\\
&= (\delta^{2\alpha} \#P)^{6(155+68\alpha)} (\delta^{2\alpha} \#L)^{-2(157+68\alpha)} d^{4\alpha(40+17\alpha)}\\
&\gtrapprox (\delta^\dag)^{-\alpha(1-\alpha)},
\end{split}
\end{equation*}
i.e.
\[
(\delta^{2\alpha} \#P)^{6(155+68\alpha)} \gtrapprox  d^{-3\alpha(53+23\alpha)}  \delta^{-\alpha(1-\alpha)}(\delta^{2\alpha}\#L)^{2(157+68\alpha)}.
\]
It is now clear that the worst case occurs when $d$ is as large as possible (i.e. $d\sim 1$). We conclude that
\[
\#P \gtrapprox \delta^{-2\alpha-c(\alpha)}(\delta^{2\alpha}\#L)^{\frac{157+68\alpha}{3(155+68\alpha)}},\quad c(\alpha) = \frac{\alpha(1-\alpha)}{6(155+68\alpha)}.
\]
The final step is to verify that for all $\alpha\in(0,1)$, $\frac{157+68\alpha}{3(155+68\alpha)}\leq 1$.
\end{proof}

\begin{remark}\label{discussionOfLossesRemark}
When $c=1/2$, the discretized sum-product theorem (Proposition \ref{modifiedGKZ}) gives a gain of roughly $\delta^{-1/100}$. The final gain Proposition \ref{discretizedFurstenbergWithNonConcentrationProp} is roughly 50 times, or two orders of magnitude, worse than this. Roughly speaking, one order of magnitude comes from repeated pigeonholing and use of Cauchy-Schwarz, while the other order of magnitude comes from the use of Balog-Szemer\'edi-Gowers. While some further optimizations are likely possible, it appears that such optimizations would yield rather modest gains. In particular, Proposition \ref{modifiedGKZ} requires that the difference set $A-A$ be small (rather than merely requiring that $E_{+}(A,A)$ be large), so it appears difficult to avoid using the Balog-Szemer\'edi-Gowers theorem). While a more efficient sequence of pigeonholing and Cauchy-Schwarz arguments might improve the final bound, since the current argument only loses (roughly) one order of magnitude at this step, any improvement here would at best yield a modest improvement in the final bound. 
\end{remark}

\section{From Proposition \ref{discretizedFurstenbergWithNonConcentrationProp} to discretized Furstenberg}\label{reductionSection}
Our goal in this section is to prove that Proposition \ref{discretizedFurstenbergWithNonConcentrationProp} implies Proposition \ref{discretizedFurstenbergProp}, and hence Theorem \ref{dimFurstenberg}. First, we will briefly describe the differences between Propositions \ref{discretizedFurstenbergWithNonConcentrationProp} and \ref{discretizedFurstenbergProp}. Both are incidence theorems about collections of points and lines, but Proposition \ref{discretizedFurstenbergWithNonConcentrationProp} requires two additional non-concentration conditions. 

Property (\ref{robustTransversalityOfLinesThruPtItem}) is sometimes called ``robust transversality,'' and it asserts that for each $p\in P$, a typical pair of lines incident to $p$ point in well-separated directions. With this condition, Proposition \ref{discretizedFurstenbergWithNonConcentrationProp} can also be viewed as a bilinear incidence theorem. Thus our reduction from Proposition \ref{discretizedFurstenbergWithNonConcentrationProp} to \ref{discretizedFurstenbergProp} will employ ``bilinear to linear'' type arguments, which were first used by Tao, Vargas, and Vega in the context of the Kakeya and Restriction problems \cite{tvv}. A prototype for the specific arguments used here can be found in \cite[\S 2.3]{gz}. See also \cite{TaoBlog} for a similar argument in a slightly different setting.

Property (\ref{nonConcentrationLinesThroughAPointItem}) is also an assertion about the distribution of directions $\{\operatorname{dir}(l)\colon (p,l)\in I\}$. This property imposes stronger non-concentration conditions at small scales (specifically at length scales between $\delta$ and $(\delta^{-2\alpha}/\#P)^{1/\alpha}$), but it says nothing about non-concentration at larger scales. To the best of our knowledge, the arguments used to obtain the assertion in Property (\ref{nonConcentrationLinesThroughAPointItem}) are novel, and we hope that similar arguments might be helpful for related problems. 

\begin{proof}[Proof of Proposition \ref{discretizedFurstenbergProp}]
Fix $0<\alpha<1$ and $c<c(\alpha)=\frac{\alpha(1-\alpha)}{6(155+68\alpha)}$. We will show that if the parameters $\eps>0$ and $C$ are chosen appropriately, then for all $\delta>0$ sufficiently small, if $E\supset \bigcup_{l\in L_0}P_l$ is a discretized $(\alpha,2\alpha)$-Furstenberg set (with implicit parameters $\eps$ and $C$), then
\begin{equation}\label{coveringNumberOfE}
\mathcal{E}_\delta(E) \geq \delta^{-2\alpha -c}.
\end{equation}
In particular, \eqref{coveringNumberOfE} will follow from the estimate 
\begin{equation}\label{coveringNumberOfEApprox}
\mathcal{E}_\delta\Big( \bigcup_{l\in L_0}P_l \Big) \gtrapprox \delta^{-2\alpha -c(\alpha)}.
\end{equation}

\paragraph{Constructing the incidence arrangement.}
Our goal is to construct a $\delta$-separated set $P\subset\bigcup_{l\in L_0}P_l$, and an incidence relation $I\subset P\times L$ that satisfies the hypotheses of Proposition \ref{discretizedFurstenbergWithNonConcentrationProp}. While it is tempting to simply choose $P$ to be a maximal $\delta$-separated subset of $\bigcup_{l\in L_0}P_l$, it is possible that a set constructed in this fashion will have the following undesirable property: let $R$ be a rectangle of dimensions $r\times\delta$, so that many lines $l\in L_0$ intersect $R$ in a line-segment of length roughly $r$; while Item (\ref{secondItemDiscretizedFurstenbergDefn}) from Definition \ref{discretizedFurstDef} guarantees that each set $R\cap P_l$ has cardinality $\lessapprox (r/\delta)^\alpha$, the union $R\cap \bigcup P_l$ might be much larger. For technical reasons that will become apparent later in the proof, we would like to avoid this type of incidence arrangement. 

To avoid this problem, we will use a stopping-time type argument to construct an incidence arrangement $(P,L,I)$ so that for each rectangle $R$ of this type, only a few lines from $L_0$ that have large intersection with $R$ contribute points to $R$. 

Let $k = \lfloor 1/\eps\rfloor$ and select $T$ so that $2^{-kT}\leq\delta/6< 2^{-k(T-1)}$. Refining each set $\{P_l\}_{l\in L_0}$ by a factor of at most $\delta^{-k-3}$, we can suppose that each set $P_l$ is discretized at scale $2^{-kT}$. Apply Corollary \ref{moranRegularCor} to the collection of sets $\{P_l\}_{l\in L_0}$, and let $L_1\subset L_0$; $\{P_l'\}_{l\in L_1};$ and $(N_0,\ldots,N_{k-1})$ be the output from the corollary. We have

\begin{equation}\label{lotsOfPointsOnLines}
\sum_{l\in L_1}\#P_l' \gtrapprox \sum_{l\in L}\#P_l \gtrapprox \delta^{-3\alpha}.
\end{equation}

Enumerate the elements of $L_1$ as $l_1,\ldots,l_M$, $M\approx\delta^{-2\alpha}$. Will will construct a point set $P_1\subset\bigcup_{l\in L}P_l$ and an incidence relation $I_1\subset P_1\times L_1$ as follows.

We begin by setting $P_1=\emptyset$ and $I_1 = \emptyset$. Let $m=1$, and add all of the points in $P_{l_1}'$ to $P_1$; add all of the incidences $\{ (p,l_1)\colon p\in P_{l_1}'\}$ to $I_1$. For each $2^T$-adic square $S$ for which $P_{l_1}'\cap S\neq \emptyset$, we say that $l_1$ has \emph{contributed points to square $S$}. We have now ``processed'' the line $l_1$.

Suppose that we have processed the lines $l_1,\ldots,l_{m-1}$. We shall process the line $l_m$ as follows. To begin, we declare all $2^T$-adic squares inside $[0,1]^2$ to be ``undominated.'' We perform the following procedure for each $j=0,\ldots, k$, starting with $j=0$: consider each $2^T$-adic square $S\subset[0,1)^2$ of side-length $2^{-jT}$ that is not contained in a (larger) dominated square. 
\begin{itemize}
\item If $S\cap {P'_{l_m}}=\emptyset$, we ignore $S$, and it remains undominated (squares of this type will not be relevant for successive steps of the algorithm). 
\item If instead $P'_{l_m}\cap S\neq\emptyset$, and hence  $\#(P'_{l_m}\cap S)=\prod_{i=j}^{k-1}N_i$, then either (A) or (B) must hold:
	\begin{itemize}
		\item[(A):] there is at least one line $l_n,\ n<m$ so that: 
			\begin{itemize}
			\item[(i):] $l_n$ has contributed points to $S$, 
			\item [(ii):] $\angle(l_m, l_n)\leq (\frac{\pi}{2})2^{(j-k)T}$, 
			\item [(iii):] $\operatorname{dist}(l_m\cap S, l_n\cap S)\leq (\sqrt 2)2^{-kT}.$
			\end{itemize}
		\item[(B):] no such line exists.
	\end{itemize}
	If Option (A) occurs, select a line $l_n$ with this property; add the pairs $\{(p,l_m)\colon p\in S\cap P'_{l_n}\}$ to $I_1$; and mark the square $S$ as dominated. We will say that ``$l_m$ is dominated by $l_n$ on square $S$.'' Note that the conditions in Option (A) imply that $l_n\cap S \subset N_{\delta}(l_m)$ (indeed, there is a point in $S$ where $l_m$ and $l_n$ have separation at most $(\sqrt 2) 2^{-kT}\leq \delta/4$, and the condition on $\angle(l_m,l_n)$ then ensures that $l_n\cap S \subset N_{\delta}(l_m)$. Thus each of the incidences $(p,l_m)$ added in this step satisfy $p\in N_{\delta}(l_m)$. If Option (B) occurs, the square $S$ remains undominated. 
\end{itemize}
Repeat this process for $j=1,2,\ldots,k$. We say a $2^T$-adic square $S$ is a \emph{purely undominated square} if $S$ does not contain a dominated square and is not contained in a dominated square. Moreover, we say a purely undominated square $S$ is \emph{maximal purely undominated} every $2^T$-adic square $S'\supsetneq S$ \emph{does} contain a dominated square. Each purely undominated square $S$ is contained in a unique maximal purely undominated square. For each maximal purely undominated square $S$ for which $S\cap P_{l_m}'\neq\emptyset$, add the points in $S\cap P_{l_m}'$ to $P_1$, and add the corresponding incidences $(p,l_m)$ to $I_1$. For each purely undominated $2^T$-adic square $S$ (maximal or not) for which $P_{l_m}'\cap S\neq \emptyset$, we say that $l_m$ has contributed points to square $S$. 

After the above steps have been completed, we have now processed the line $l_m$. Observe that at this point, the incidence  arrangement $(P_1,L_1,I_1)$ has the following properties:
\begin{itemize}
\item If $(p,l_m)\in I_1$, then $p\in N_{\delta}(l_m)$
\item For each $2^T$-adic square $S$, there is a line $l_n$ (both $l_n=l_m$ and $l_n\neq l_m$ are possible) so that
\begin{equation}\label{pinP1VsPinPl}
\#(S\cap I_1(l_m)) =\#(S \cap P_{l_n}').
\end{equation}
\end{itemize}
As a consequence of \eqref{pinP1VsPinPl} when $S=[0,1)^2$, we have $\# I_1(l_m)\gtrapprox \delta^{-\alpha}.$ Furthermore, 
\begin{equation}\label{propertyTwoOfI1}
\#(B(x,r)\cap I_1(l_m))\lessapprox (r/\delta)^{\alpha}\quad\textrm{for all $\delta\leq r$ and all balls}\ B(x,r).
\end{equation}
To verify \eqref{propertyTwoOfI1}, fix a ball $B(x,r)$ and choose $j$ so that $2^{-jT}\leq r< 2^{-(j-1)T}$. Then $B(x,r)$ intersects $\approx 1$ squares of side-length $2^{-jT}$, so \eqref{propertyTwoOfI1} follows from \eqref{pinP1VsPinPl}, plus the fact that $P_{l_n}'$ is a $(\delta,\alpha)_1$-set.

As additional lines are processed, we will add more points to $P_1$ and more incidences to $I_1$, but this will not disrupt the properties of $l_m$ described above.


After all of the lines in $L_1$ have been processed in the above fashion, the incidence arrangement $(P_1, L_1, I_1)$ satisfies  Properties (\ref{itemB}) (with $V=1$) and (\ref{itemC}) from Proposition \ref{discretizedFurstenbergWithNonConcentrationProp}, as well as the estimate $\#I_3\approx\delta^{-3\alpha}$. We claim that $P_1$ is discretized at scale $2^{-kT}$, and thus $\lessapprox 1$ points $p\in P_1$ can be contained in any ball of radius $\delta$. To prove this claim, note that if $S$ is a square of side-length $2^{-kT}$, then at most one line $l\in L_1$ can contribute points to $S$---if two such lines $l,l'$ both contributed points to $S$, then we must have that either $\angle(l,l')> \pi/2$ (which is impossible), or $\operatorname{dist}(l\cap S, l'\cap S)> (\sqrt{2})2^{-kT}=\operatorname{diameter}(S)$ (which is impossible). 

Since $P_1$ is discretized at scale $2^{-kT}$, in order to establish \eqref{coveringNumberOfEApprox} it suffices to prove that
\begin{equation}\label{necessaryBdP1}
\#P_1 \gtrapprox \delta^{-2\alpha -c(\alpha)}.
\end{equation}

Finally, we observe that if $p\in P_1$, if $1\leq j\leq k-1$, and if $S$ is a square of side-length $2^{-jT}$ that contains $p$, then there is exactly one line $l\in I_1(p)$ that contributes points to square $S$ (though note that other lines might contribute points to sub-squares of $S$ that do not contain $p$).

\paragraph{Non-concentration of lines through a point.}
Our next task is to extract a set of points $P_2\subset P_1$ and a set of incidences $I_2\subset I_1$ so that for each $p\in P_2$, the distribution of directions of lines incident to $p$ satisfies Property (\ref{nonConcentrationLinesThroughAPointItem}). 

For each $p\in P_1$, we will identify the set $I_1(p)$ with a subset of $[0,1]$ by identifying the line $y=mx+b$ with the point $m$; call this set $A_p$. Since $L_0$ is $\delta$-separated, at most 16 points in $A_p$ can intersect any interval of length $2^{-kT}\leq \delta/6$ (indeed, the set of such points in $A_p$ corresponds to lines in $L_1$ whose image under $\iota$ is contained in a ball of radius $2\delta$). In particular, by refining each set $A_p$ by a factor of 16, we can suppose that $A_p$ is $2^{-kT}$-discretized. By removing an additional point if necessary, we can also suppose that $A_p\subset[0,1)$.

Apply Corollary \ref{moranRegularCor} to the collection of sets $\{A_p\}_{p\in P_1}$, and let $P_2\subset P_1$; $\{A_p'\}_{p\in P_2};$ $(M_0,\ldots,M_{k-1})$ be the output from the corollary. Each set $A_p'$ has cardinality $\prod_{j=0}^{k-1}M_j$, and
\begin{equation}\label{mostIncidencesPreserved}
(\#P_2)\prod_{j=0}^{k-1}M_j = \sum_{p\in P_2}\#A_p' \approx \sum_{p\in P_1}\#A_p\approx \#I_1\approx\delta^{-3\alpha}.
\end{equation}
Define $I_2$ to consist of those incidences $(p,l)\in P_2\times L_1$ such that the slope of $l$ is in $A_p'.$ 

Recall that if $0\leq j\leq k$ and if $J\subset[0,1)$ is an $2^T$-adic interval of side-length $2^{-jT}$ that meets $A_p$, then $\#(J\cap A_p')= \prod_{i=j}^{k-1}M_i$. Our next task is to control the size of this product. In theory, each $M_i$ could be as large as $2^{T}$. We will show that on average, each $M_i$ has size at most $2^{\alpha T}$. More precisely, we will prove that there exists a constant $C_0=C_0(\alpha)$ so that 
\begin{equation}\label{linesNotTooConcentrated}
\prod_{i=j}^{k-1}M_i \leq  \delta^{-C_0\eps} \big( 2^{jT}\delta \big)^{-\alpha}, \quad\textrm{for each}\ j = 0,\ldots,k-1.
\end{equation}
We will prove \eqref{linesNotTooConcentrated} by induction---the base case will be when $j$ is large (i.e.~$j$ is close to $k-1$, which corresponds to small length scales), and the induction step will go from $j$ to $j-1$.  For the base case, the trivial estimate $M_i\leq 2^T$ suffices: if $C_0\geq 2(1-\alpha)(k-j)$ then,
\[
\prod_{i=j}^{k-1}M_i\leq 2^{(k-j)T} =2^{(k-j)T(1-\alpha)}2^{(k-j)T\alpha}\leq\delta^{-C_0\eps}\big( 2^{jT}\delta \big)^{-\alpha},
\]
and so \eqref{linesNotTooConcentrated} holds.

Suppose now that \eqref{linesNotTooConcentrated} has been established for all $j'>j$; our task is to prove \eqref{linesNotTooConcentrated} for $j$. If $M_j\leq 2$ then this follows immediately from \eqref{linesNotTooConcentrated} for $j+1$. Suppose now that $M_j\geq 3$. 

Since $(\#I_2)/(\#L_1)\gtrapprox\delta^{-\alpha}$, there exists a line $l\in L$ with $\# I_2(l)\gtrapprox\delta^{-\alpha}$. Fix this choice of $l$.  Define $\rho = 2^{(j-k)T}$. By \eqref{propertyTwoOfI1}, each ball $B$ of radius $\delta^{1-2\eps}\rho^{-1}$ satisfies 
$
\# ( I_2(l) \cap B)\lessapprox  (\delta^{1-2\eps}\rho^{-1}/\delta)^\alpha\lessapprox \rho^{-\alpha},
$
and hence we can select a set $P'\subset I_2(l)$ that is $(\delta^{1-2\eps}\rho^{-1})$-separated, with $\#P'\gtrapprox(\delta/\rho)^{-\alpha}$.

Let $m$ be the slope of $l$. For each $p\in P'$, we have that $m\in A_p$, and hence the $2^T$-adic interval $J$ of length $\rho$ containing $m$ intersects $A_p$. Thus for each $p\in P',$ there are $\geq \prod_{i=j}^{k-1}M_i $ lines $l' \in I_2(p)$ with $|m-m(l')|\leq\rho$. On the other hand, the lines $l' \in I_2(p)$ with $|m-m(l')|\leq 2^{-T}\rho$ are contained in a union of at most two $2^T$-adic intervals of length $2^{-(j+1)T}$; each of these intervals contains $\prod_{i=j+1}^{k-1}M_i = M_j^{-1}\prod_{i=j}^{k-1}M_i\leq\frac{1}{3}\prod_{i=j}^{k-1}M_i$ points. 

Thus for each $p\in P'$ there are at most $\frac{2}{3}\prod_{i=j}^{k-1}M_i$ lines $l'\in I_2(p)$ with $|m-m(l')|\leq 2^{-T}\rho$. We conclude that 
\[
\# \{l' \in I_2(p)\colon 2^{-T}\rho\leq |m-m(l')|\leq \rho\} \geq \frac{1}{3}\prod_{i=j}^{k-1}M_i.
\]
Call the above set of lines $L(p)$. Since the points in $P'$ are $\delta^{1-2\eps}\rho^{-1}$ separated and contained in the $\delta$-neighborhood of $l$, if $p,p'\in P'$ are distinct, then the sets $L(p)$ and $L(p')$ are disjoint (indeed, a line $l'\in L(p)\cap L(p')$ must intersect $N_{\delta}(l)$ in an interval of length $\geq \delta^{1-2\eps}\rho^{-1}$, but this is impossible since the slopes of $l$ and $l'$ differ by at least $2^{-T}\rho\geq \delta^{2\eps}\rho)$. Thus
\[
\# \bigcup_{p\in P'}L(p) \gtrapprox (\#P')\prod_{i=j}^{k-1}M_i\gtrapprox (\rho/\delta)^{\alpha}\prod_{i=j}^{k-1}M_i.
\]
The lines in the above set satisfy $\operatorname{dist}(\iota(l),\iota(l'))\leq C_1\rho$, for some $C_1\lesssim 1$. Since $L_1$ is a $(\delta,2\alpha)_2$-set, we have  
\begin{equation}\label{reverseBallEstimate}
\prod_{i=j}^{k-1}M_i \lessapprox(\delta/\rho)^{\alpha}\#\big(B(x,C_1\rho)\cap \iota(L_1)\big)\lessapprox (\delta/\rho)^{\alpha}(\rho/\delta)^{2\alpha}=(\rho/\delta)^\alpha.
\end{equation}
Selecting $C_0$ sufficiently large based on the implicit constants in the above quasi-inequality (these constants in turn depend only on $\alpha$), we conclude that \eqref{linesNotTooConcentrated} holds for $j$. This completes the induction step.

By \eqref{linesNotTooConcentrated}, if $p\in P_2$ and $J\subset S^1$ is an interval whose image $J'\subset[0,1]$ is a $2^T$-adic interval, then 
\[
\#\{l\in I_2(p)\colon \operatorname{slope}(l)\in J\}\lessapprox (|J|/\delta)^\alpha.
\]
Now, let $v\in S^1$ and let $\delta\leq r\leq 1$. Then there is an index $j$ so that $2^{-jT}\leq r < 2^{-(j-1)T}\leq \delta^{-2\eps}2^{-jT}$, and thus the set of unit vectors $\{v'\in S^1\colon \angle(v',l)\leq r\}$ correspond to a set of slopes $m$ that can be covered by a union of $\approx 1$ intervals of length $2^{-jT}$. We conclude that
\begin{equation}\label{nonConcentration}
\#\{l\in I_2(p)\colon \angle(l,v)\leq r\}\lessapprox (r/\delta)^\alpha.
\end{equation}
Our incidence arrangement $(P_2,L_1,I_2)$ now satisfies Properties (\ref{itemB}), (\ref{itemC}), (\ref{itemD}), and (\ref{nonConcentrationLinesThroughAPointItem}) from Proposition \ref{discretizedFurstenbergWithNonConcentrationProp} (with $\lessapprox$ in place of $\leq \delta^{-\eps}$ in some instances), as well as the estimate $\#I_3\approx \delta^{-3\alpha}$.

\paragraph{Robust transversality of lines through a point.}
Our goal in this section is to find a set of incidences $I_3\subset I_2$ with $\#I_3\approx\delta^{-3\alpha}$ that continues to satisfy Hypotheses (\ref{itemB}), (\ref{itemC}), (\ref{itemD}), and (\ref{nonConcentrationLinesThroughAPointItem}), but which also satisfies a re-scaled version of Property (\ref{robustTransversalityOfLinesThruPtItem}). 

Let $C_1\lesssim 1$ be a constant that will be chosen below. Let $j_0\geq 0$ be an index so that $M_{j_0}>C_1$, and $M_j\leq C_1$ for each $j=0,\ldots,j_0-1$; we can assume that such an index must exist, and furthermore that $j_0\leq k(1-\alpha/2)$, since otherwise we would have 
\[
\#I_2(p)=\#A_p' = \prod_{j=0}^{k-1}M_j =  \prod_{j=0}^{k - \lceil k\alpha/2\rceil}M_j \ \prod_{k + 1 - \lceil k\alpha/2\rceil}^{k-1}M_j \lessapprox C_1^{k}  2^{Tk(\alpha/2)} \lessapprox \delta^{-\alpha/2}\quad\textrm{for all}\ p\in P_2.
\]
But then $\#P_2 \gtrapprox \#I_2 / \delta^{-\alpha/2}\gtrapprox\delta^{-(5/2)\alpha}>\delta^{-2\alpha-c(\alpha)}$, which would establish \eqref{necessaryBdP1}. Define $s = 2^{-j_0T}$. In particular, we have
\begin{equation}\label{lowerBdOnS}
s\geq\delta^{1-\alpha/2}.
\end{equation}

For each $p\in P_2$, define $A_p''=A_p'\cap J$, where $J$ is a $2^T$-adic interval of length $2^{-j_0T}$ that intersects $A_p'.$ By construction we have 
\[
\#A_p''= (M_{0}M_1\cdots M_{j_0-1})^{-1}\#A_p'  \gtrsim \#A_p',
\]
and for any $2^T$-adic interval $J'\subset J$ of length $2^{-(j_0+1)T}$, we have
\[
\#(A_p''\cap J')\leq \frac{1}{C_1}\#A_p''.
\]
Let $I_3\subset I_2$ consist of those pairs $(p,l)\in I_2$ for which $\operatorname{slope}(l)\in A_p''$. We record the following properties of this incidence arrangement.
\begin{itemize}
\item Properties (\ref{itemB}) (with $V=1$) and (\ref{itemC}) continue to hold for $(P_2,L_1,I_3)$, and $\#I_3\gtrapprox\delta^{-3}$. 
\item \eqref{nonConcentration} (and thus Property (\ref{nonConcentrationLinesThroughAPointItem})) continues to hold with $I_3$ in place of $I_2$
\item For each $p\in P_2$, $\#I_3(p) = (\#I_3)/(\#P_2),$ and thus Property (\ref{itemD}) continues to hold.
\item For each $p\in P_2,$ the set $\{ \operatorname{slope}(l)\colon l\in I_3(p)\}\subset [0,1]$ is contained in an interval of length $s$. If $J\subset [0,1]$ is an interval of length $\delta^{2\eps}s$, then 
\begin{equation}\label{robustTransVec}
\#\{ l\in I_3(p)\colon \operatorname{slope}(l)\in J\} \leq \frac{2}{C_1}\#I_3(p).
\end{equation}
(Indeed, the slopes of the lines in the above set correspond to points in $A_p''$ that are contained in a union of at most two $2^T$-adic intervals of length $2^{-(j_0+1)T}$).
\end{itemize}

For each $p\in P_2$, let $v_p\in S^1$ be a vector of the form $(1,m)/\sqrt{1+m^2}$, where $m$ is the midpoint of the smallest dyadic interval containing the set of slopes $\{ \operatorname{slope}(l)\colon l\in I_3(p)\}$. We can suppose $v_p=(\cos\theta,\sin\theta)$, with $\theta\in[0,\pi/4]$.

\paragraph{Partitioning into rectangles.}

Let $\mathcal{Z}_0=([0,1]\times[-1,1])\cap (\frac{s}{2}\ZZ)^2$. For each $z=(m,b)\in\mathcal{Z}_0$, let $l_z$ be the line $y=mx+b$, and define 

 \[
P_2^z = \{p\in P_2\colon p\in N_s(l_z),\ \angle(v_p, l_z)\leq s/2\},\quad L_1^z = \bigcup_{p\in P_2^z}I_3(p).
 \]
We claim that
\begin{equation}\label{P3L3Union}
P_2 = \bigcup_{z\in\mathcal{Z}_0}P_2^z,
\end{equation}
and each $p\in P_2$ is contained in at most 9 sets in the above union. Indeed, for each $p\in P_2$, there exists at least one, and at most three $m\in [0,1]\cap (\frac{s}{2}\ZZ)$ with the following property: if $v_m$ is the direction of the vector $(1,m)$, then $\angle (v_p, v_m)\leq s/2$. Once this choice of $m$ has been fixed, there is at least one, and at most three $b\in [-1, 1]\cap(\frac{s}{2}\ZZ)$  with the following property: if $z = (m, b)$, then $p\in N_s(l_z)$. 

Observe as well that each $l\in L_1$ can be contained in at most 100 sets of the form $L_1^z$: if $l\in L_1^z$ then $l\in I_3(p)$ for some $p\in P_2^z$, which implies $\angle(l,l_z)\leq\angle(l, v_p)+\angle(v_p,l_z)\leq s$. Next, since $p\in N_{\delta}(l)\cap N_s(l_z)\cap[0,1]^2$, we have that $\operatorname{dist}(l\cap [0,1]^2,l_z)\leq s+\delta$. But this implies that $\operatorname{dist}(\iota(l),\iota(l_z))\leq 3s$. The result now follows from the fact that $\mathcal{Z}_0$ is $s/2$-separated.

Thus if we define $I_3^z = I \cap (P_2^z\times L_1^z)$, then $I_3 = \bigcup_{z\in\mathcal{Z}_0}I_3^z$, and each $(p,l)\in I_3$ is contained in at most 9 sets in the above union. By refining $\mathcal{Z}_0$ by a factor of at most 100, we can suppose that the sets $\{P_2^z\}$, $\{L_1^z\}$, and thus $\{\mathcal I_3^z\}$ are disjoint. Using dyadic pigeonholing, select $\mathcal{Z}\subset\mathcal{Z}_0$ so that the sets $\{P_2^z\}_{z\in\mathcal{Z}}$, $\{L_1^z\}_{z\in\mathcal{Z}}$ (and thus $\{\mathcal I_3^z\}_{z\in \mathcal{Z}}$) are disjoint, 
$
\sum_{z\in\mathcal{Z}} \#I_3^z\gtrsim \#I_3,
$
and $\#P_2^z$ (and hence $\#I_3^z$) is approximately the same (up to a multiplicative factor of 2) for each $z\in\mathcal{Z}$. In particular,
\begin{equation}\label{eachZisAvg}
\#P_2^z \approx (\#P_2)/(\#\mathcal{Z}),\quad\textrm{and}\quad \# I_3^z\approx (\#I_3)/(\#\mathcal{Z})\quad\textrm{for each}\ z\in\mathcal{Z}. 
\end{equation}

Geometrically, we have partitioned (a large portion of) the arrangement $(P_2, L_1, I_3)$ into sub-arrangements of the form $(P_2^z, L_1^z, I_3^z)$, each of which is contained in the region $[0,1]^2\cap N_{3s}(l_z)$, and each of which contributes a roughly equal number of incidences. Our next step is to analyze each of these sub-arrangements. 


\paragraph{Averaging over segments.}
For the remainder of our arguments we will fix a choice of $z\in\mathcal{Z}$ with 
\begin{equation}\label{lowerBdL1z}
\#L_1^z \gtrapprox (\#L_1)/(\# \mathcal{Z})\gtrapprox \delta^{-2\alpha}(\# \mathcal{Z})^{-1}.
\end{equation}

 Let $P=P_2^z,$  $L=L_1^z$ and $I=I_3^z$. Let $\mathcal{S}$ be the set of $2^T$-adic squares in $[0,1)^2$ of side-length $2^{ (j_0-2-k)T}$ (note that $2^{ (j_0-2-k)T}\approx\delta/s$).  We claim that the arrangement $(P,L,I)$ has the following properties: If $S\in\mathcal{S}$ and if $p\in P\cap S$, then 
 \begin{itemize}
 \item[(i):] There is exactly one line $l\in I(p)$ that contributes points to the square $S$.
 \item[(ii):] Every other line $l'\in I(p)$ is dominated by $l$ on some square $S'\supset S$.
\end{itemize}
Item (i) holds in general (see the discussion following \eqref{necessaryBdP1}). For Item (ii), since $l'\in I(p)$ and $l'\neq l$, we have that $l$ contributed points to the square $S$ containing $p$, and $l'$ did not contribute points to $S$. Let $S'$ be the largest $2^T$-adic square containing $p$ so that $l'$ was dominated by $l$ on $S'$ (the square $S'$ with this property is necessarily unique). We will show that $S'\supset S$. If $S'\supsetneq S$ then we are done. If not, then at the step in the construction of $(P_1,L_1,I_1)$ when $l'$ was processed, the square $S$ would not be contained in a (larger) dominated square. But we also have that 
\[
\operatorname{dist}(l \cap S, l'\cap S)\leq \operatorname{dist}(l \cap S', l'\cap S')\leq (\sqrt 2)2^{-kT},
\] 
and 
\[
\angle(l,l')\leq \angle(l,l_z)+\angle(l_z,l')\leq 2s = 2\cdot 2^{-j_0T} \leq (\frac{\pi}{2})2^{ ([k -(j_0-2)  ] - k) T},
\] 
which implies that $l$ is dominated by $l'$ on square $S$.

Next, observe that if $l,l'$ both contribute points to $S$, then since $\angle(l,l')\leq (\frac{\pi}{2})2^{((j_0+1)-k)T}$, we must have $\operatorname{dist}(l \cap S, l'\cap S)>(\sqrt 2)2^{-kT}.$ 

Let $L(S)\subset L$ be the set of lines that contribute points to the square $S$. Then the above discussion shows that if $l,l'\in L(S)$ are distinct, then the sets $S\cap I(l)$ and $S\cap I(l')$ are disjoint. We have
\[
P = \bigsqcup_{S\in\mathcal{S}} \bigsqcup_{l\in L(S)} (S\cap I(l)).
\]
Since each set $P_l$ is a $(\delta,\alpha)_1$-set, each of the sets $(S\cap I(l))$ have cardinality at most
\[
\delta^{-\eps}\Big(\frac{2^{ (j_0-2-k)T}}{\delta}\Big)^\alpha \lessapprox \Big(\frac{\delta/s}{\delta}\Big)^\alpha = s^{-\alpha}.
\]
Thus by dyadic pigeonholing, there is a number $M$ with
\begin{equation}\label{boundsOnM}
1\leq M\lessapprox s^{-\alpha},
\end{equation}
so that
\[
\#P \approx  \sum_{S\in\mathcal{S}} \sum_{\substack{l\in L(S)\\ M\leq \#(S\cap I(l))<2M}} \#(S\cap I(l)).
\]

Choose $Q\subset P$ by selecting one point from $S\cap I(l)$ for each square $S\in\mathcal{S}$ and each $l\in L(S)$ in the above sum. Define $I_Q = I\cap (Q\times L)$. In particular, \eqref{nonConcentration} and \eqref{robustTransVec} continue to hold with $Q$ in place of $P$, and $I_Q$ in place of $I_2$ or $I_3$. Since for each square $S\in\mathcal{S}$ and each line $l\in L(S)$, we have $\#(S\cap I(l))<2M$, we have retained a factor of $\gtrapprox M^{-1}$ of the incidences. We therefore have 
\begin{equation}\label{lowerBdIQ}
\#I_Q\gtrapprox M^{-1}\delta^{-\alpha}(\#L).
\end{equation}

\paragraph{Rescaling.}
Define $\tilde\delta=\delta/s$. By \eqref{lowerBdOnS} we have $\tilde\delta\leq \delta^{\alpha/2}$, and hence when we write $X\lessapprox Y$, we do not need to distinguish between estimates of the form $\delta^{-C\eps}$ and $\tilde\delta^{-C\eps}$ (recall that in our definition of $\lessapprox$, the constant $C$ is allowed to depend on $\alpha$).

Let $T\colon\RR^2\to\RR^2$ be a translation in the direction $\operatorname{dir}(l_z)^\perp$ that sends $l_z$ to a line through the origin, composed with an anisotropic dilation by $s^{-1}$ in the direction $\operatorname{dir}(l_z)^\perp$; the image of $N_{s}(l)\cap [0,1]^2$ under this linear map is contained in $B(0, 10)$, and it has area $\sim 1$; the map $\iota\circ T\circ\iota^{-1}$ is the composition of a translation with a map that is comparable isotropic dilation by a factor of $s^{-1}$. Let $\tilde Q$ (resp.~$\tilde L$) be the image of $Q$ (resp. $L$) under $T$. Let $\tilde I_Q\subset \tilde Q\times\tilde L$ be the incidence relation corresponding to $I_Q$. If $(\tilde q,\tilde l)\in\tilde I_Q$ then $\tilde q\in N_{\tilde\delta}(\tilde l)$. 

If $S\in\mathcal{S}$ and $l,l'\in L(S)$, then $T(l\cap S)$ and $T(l'\cap S)$ are $\gtrapprox \delta/s=\tilde\delta$ separated. Thus for each such square $S$, $\tilde Q \cap T(S) = T(Q \cap S)$ is $\gtrapprox\tilde\delta$-separated. But since the squares in $\mathcal{S}$ have side-length $\approx\tilde\delta$, each ball $B\subset\RR^2$ of radius $\tilde\delta$ intersect at most $\approx 1$ sets of the form $\{T(S)\colon S\in\mathcal{S}\}$. We conclude that $\#(\tilde Q\cap B)\lessapprox 1$ for each ball $B\subset\RR^2$ of radius $\tilde\delta$. Refining the set $\tilde Q$ by a $\approx 1$ factor, we can suppose that $\tilde Q$ is $\tilde\delta$-separated; this refinement does not affect the validity of any of our previous estimates.

Define $V=\min(1, (s^\alpha M)^{-1})$; by \eqref{boundsOnM} we have $V\approx (s^\alpha M)^{-1}$, and thus $M^{-1}\delta^{-\alpha} \approx V\tilde\delta^{-\alpha}$. We will record the following properties of $\tilde Q,\ \tilde L$, and $\tilde I$. 
\begin{align}
&\#P\gtrsim M\#\tilde Q,\label{controlOfPPrime}\\
&\#\tilde I_Q(q)  = (\#\tilde I)(\#\tilde Q)^{-1}\quad\textrm{for each}\ q\in \tilde Q,\label{hypD}\\
&\#\tilde I_Q(l)\lessapprox M^{-1}\delta^{-\alpha}\approx \tilde\delta^{-\alpha}V\quad\textrm{for each}\ l\in \tilde L,\label{upperBdIncidencesOnLine}\\
&\#\tilde I_Q \gtrapprox M^{-1}\delta^{-\alpha}(\#\tilde L)\gtrapprox \tilde\delta^{-\alpha}V(\#\tilde L).\label{upperBdAvgIncidencesOnLine}
\end{align}
Here, the inequality \eqref{controlOfPPrime} follows from the definition of $Q$, and \eqref{lowerBdIQ} follows from \eqref{lowerBdIQ}.
\eqref{hypD} shows that $(\tilde Q, \tilde L, \tilde I)$ satisfies Property (\ref{itemD}) from Proposition \ref{discretizedFurstenbergWithNonConcentrationProp}. 

For each $\tilde l\in \tilde L$ and each ball $B(x,r)$, with $r\geq\tilde\delta$, 
\begin{equation}\label{pointsOnLineBd}
\#\big(\tilde I(\tilde l) \cap B(x,r)\big)\leq M^{-1}\delta^{-\eps}(r/\delta)^\alpha\approx (r/\tilde\delta)^\alpha V,
\end{equation}
and hence $(\tilde Q, \tilde L, \tilde I)$ satisfies Property (\ref{itemB}) from Proposition \ref{discretizedFurstenbergWithNonConcentrationProp}.

For each ball $B(x,t)$, the set of lines 
\[
\{l\in L\colon \iota(T(l))\in B(x,t)\}=\{T^{-1}(l)\colon \iota(l)\in B(x,t) \}
\] 
is contained in a ball $B'$ of radius $\approx st$; this is because $\iota\circ T\circ\iota^{-1}$ is comparable to a translation composed with an isotropic dilation by a factor of $s^{-1}$. Thus
\begin{equation}\label{pointsInBallBound}
\# (\iota(\tilde L) \cap B(x,t))\leq \delta^{-\eps}(st/\delta)^{2\alpha}=\delta^{-\eps}(t/\tilde\delta)^{2\alpha},
\end{equation}
and hence $(\tilde Q, \tilde L, \tilde I)$ satisfies Property (\ref{itemC}) from Proposition \ref{discretizedFurstenbergWithNonConcentrationProp}

Next, for each $q\in\mathcal{Q}$ we have the following analogue of \eqref{nonConcentration}: for each $v\in S^1$ and each $r\geq\tilde\delta$, 
\begin{equation}\label{rescaledNonConcentration}
\#\{\tilde l \in \tilde I_Q(\tilde q)\colon  \angle(\tilde l,v)\leq r\}\lessapprox (sr/\delta)^\alpha\leq (r/\tilde\delta)^\alpha.
\end{equation}
Indeed, \eqref{rescaledNonConcentration} follows from \eqref{nonConcentration}, since if $q=T^{-1}(\tilde q),$ then $\tilde I_Q(\tilde q)=\{T(l)\colon l\in I_Q(q)\}$. Since the latter set of lines obeys \eqref{nonConcentration}, the former obeys \eqref{rescaledNonConcentration}. Thus $(\tilde Q, \tilde L, \tilde I)$ satisfies Property (\ref{nonConcentrationLinesThroughAPointItem}) from Proposition \ref{discretizedFurstenbergWithNonConcentrationProp}.

Finally, if the constant $C_1$ is chosen appropriately, then for each $\tilde q\in\tilde Q$ we have the following analogue of \eqref{robustTransVec}: If $v\in S^1$, then since $\tilde\delta\leq\delta^{\alpha/2}$, we have 
\begin{equation}\label{rescaledRobustTransVec}
\#\{\tilde l\in \tilde I_Q(\tilde q)\colon \angle(\tilde l, v)\leq \tilde \delta^{4\eps/\alpha}\}  \leq \frac{1}{10}\#I_Q(\tilde q).
\end{equation}
Again, \eqref{rescaledRobustTransVec} follows from \eqref{robustTransVec} by the same reasoning as above. Thus $(\tilde Q, \tilde L, \tilde I)$ satisfies Property (\ref{robustTransversalityOfLinesThruPtItem}) from Proposition \ref{discretizedFurstenbergWithNonConcentrationProp} (with $4\eps/\alpha$ in place of $\eps$, but this is harmless since $\alpha$ is fixed). 
\begin{remark}\label{surviveRefinement}
Observe that arrangement $(\tilde Q,\tilde L, \tilde I)$ satisfies Property (\ref{robustTransversalityOfLinesThruPtItem}) with room to spare---if we select a smaller set of incidences $\tilde I_Q'\subset\tilde I_Q$ so that
\[
\#\tilde I'_Q(\tilde q) \geq \frac{1}{4} \#\tilde I_Q(\tilde q) \quad\textrm{for each}\ q\in\tilde Q,
\]
then Property (\ref{robustTransversalityOfLinesThruPtItem}) will remain true with $\tilde I_Q'$ in place of $\tilde I_Q$; we will need this flexibility below. 
\end{remark}

We have shown that the arrangement $(\tilde Q, \tilde L, \tilde I_Q)$ satisfies all of the hypotheses of Proposition \ref{discretizedFurstenbergWithNonConcentrationProp} (or more precisely, an equivalent version of these hypotheses, as discussed in Remark \ref{epsVsLessapprox}) except Property (\ref{itemA}), which is only satisfied on average. To fix this, we shall use the following graph refinement lemma from \cite{dg}.

\begin{lemma}[Graph refinement]\label{graphRefinementLemma}
Let $G = (A\sqcup B, E)$ be a bipartite graph. Then there is a sub-graph $G'=(A'\sqcup B', E')$ so that $\#E'\geq \#E/2$; each vertex in $A'$ has degree at least $\frac{\#E}{4\#A}$; and each vertex in $B'$ has degree at least $\frac{\#E}{4\#B}$.
\end{lemma}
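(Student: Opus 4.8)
The plan is to prove the graph refinement lemma by a standard iterative pruning argument, deleting low-degree vertices one at a time and bounding the total number of edges lost. The key observation is that each deletion of a low-degree vertex removes only a small number of edges, so after all deletions at most half the edges are gone, leaving a nonempty subgraph with the desired minimum-degree properties.

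More precisely, here is the procedure I would carry out. Set $G_0 = G$. Given $G_i$, if there is a vertex $a \in A$ remaining in $G_i$ whose degree in $G_i$ is strictly less than $\frac{\#E}{4\#A}$, delete it (and all its incident edges) to obtain $G_{i+1}$; similarly, if there is a vertex $b \in B$ remaining in $G_i$ with degree strictly less than $\frac{\#E}{4\#B}$, delete it. Repeat until no such vertex exists; call the resulting graph $G'=(A'\sqcup B', E')$. Since $\#A$ vertices are ever available to delete on the $A$-side, the total number of edges removed due to $A$-side deletions is at most $\#A \cdot \frac{\#E}{4\#A} = \frac{\#E}{4}$; likewise the $B$-side deletions remove at most $\frac{\#E}{4}$ edges. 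Hence $\#E' \geq \#E - \frac{\#E}{4} - \frac{\#E}{4} = \frac{\#E}{2}$. Because $\#E' \geq \#E/2 > 0$ (the lemma is vacuous if $\#E = 0$, in which case we may take $G'$ to be a single edge's worth of trivial data, or simply note both sides are empty), the graph $G'$ is nonempty, and by construction every remaining vertex in $A'$ has degree at least $\frac{\#E}{4\#A}$ in $G'$ and every remaining vertex in $B'$ has degree at least $\frac{\#E}{4\#B}$ in $G'$.

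The one subtlety worth being careful about is that the degree thresholds are measured against the \emph{original} quantities $\#E$, $\#A$, $\#B$, not against the current sizes of the shrinking graph; this is exactly what makes the edge-loss bookkeeping work, since the number of potential deletions on each side is bounded by the original $\#A$ (resp. $\#B$), and each deletion costs fewer than the fixed threshold many edges. There is no real obstacle here — this is a textbook pruning argument — but one should state explicitly that the process terminates (it does, since each step decreases the number of vertices) and that the final degree condition holds because the process only stops when no violating vertex remains. I would present the argument in two short paragraphs: one setting up the deletion process, and one doing the edge count and reading off the conclusion.
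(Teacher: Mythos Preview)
Your pruning argument is correct and is exactly the standard proof of this lemma. Note that the paper does not actually prove Lemma~\ref{graphRefinementLemma} itself; it simply quotes the statement from \cite{dg} and uses it as a black box, so there is no ``paper's own proof'' to compare against. Your write-up would serve perfectly well as the missing justification.
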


Apply Lemma \ref{graphRefinementLemma} to $(\tilde Q\sqcup \tilde L,\ \tilde I),$ and let $(\dbtilde Q\sqcup \dbtilde L,\ \dbtilde I)$ be the resulting refinement.  We can verify that $(\dbtilde Q\sqcup \dbtilde L,\ \dbtilde I)$ now satisfies Properties (\ref{itemA}) - (\ref{nonConcentrationLinesThroughAPointItem}) of Proposition \ref{discretizedFurstenbergWithNonConcentrationProp}. Indeed, by \eqref{hypD}, each $\dbtilde q\in\dbtilde Q$ satisfies 
\[
\frac{1}{4}(\#\tilde I)(\#\tilde Q)^{-1}\leq  \# \dbtilde{I}(\dbtilde q) \leq (\#\tilde I)(\#\tilde Q)^{-1},
\]
and hence  Property (\ref{robustTransversalityOfLinesThruPtItem}) remains true (see the discussion in Remark \ref{surviveRefinement}). \eqref{rescaledNonConcentration}, and thus Property (\ref{nonConcentrationLinesThroughAPointItem}) remains true for similar reasons. Similarly for Property (\ref{itemD}). Properties (\ref{itemB}) and (\ref{itemC}) remain true because  $(\dbtilde Q\sqcup \dbtilde L,\ \dbtilde I)$ is a refinement of $(\tilde Q\sqcup \tilde L,\ \tilde I)$. 

Next, we will verify that Property (\ref{itemA}) holds. Indeed, for each $\dbtilde l\in \dbtilde L$ we have
\[
\tilde\delta^{-\alpha}V \lessapprox \#\tilde I(\#\tilde L)^{-1}\lesssim \#\dbtilde I(\dbtilde l)\leq \#\tilde I(\dbtilde l) \lessapprox \tilde\delta^{-\alpha}V.
\]
The first inequality follows from \eqref{upperBdAvgIncidencesOnLine}. The second inequality follows from Lemma \ref{graphRefinementLemma}. The final inequality follows from \eqref{upperBdIncidencesOnLine}. 

Note as well that \eqref{lowerBdL1z}, \eqref{upperBdIncidencesOnLine} and \eqref{upperBdAvgIncidencesOnLine} imply that 
\begin{equation}\label{lowerBdSizeTildeLP}
\#\dbtilde L\gtrapprox \delta^{-2\alpha}(\#\mathcal{Z})^{-1}
\end{equation}
(in brief, \eqref{lowerBdL1z} says that $\#\tilde L\gtrapprox \delta^{-2\alpha}(\#\mathcal{Z})^{-1},$ while \eqref{upperBdIncidencesOnLine} and \eqref{upperBdAvgIncidencesOnLine} says that each line in $\#\tilde L$ contributes an approximately equal number of incidence. Thus the cardinality of $\tilde L$ cannot decrease substantially after application of the graph refinement lemma).

Applying Proposition \ref{discretizedFurstenbergWithNonConcentrationProp}, we conclude that
\begin{equation}
\# \tilde Q \geq \#\dbtilde Q \gtrapprox \tilde\delta^{-2\alpha-c(\alpha)}(\tilde\delta^{2\alpha}\#\dbtilde L)V^{c(\alpha)/\alpha}
\gtrapprox \tilde\delta^{-2\alpha-c(\alpha)}(\tilde\delta^{2\alpha}\delta^{-2\alpha}(\#\mathcal{Z})^{-1})V^{c(\alpha)/\alpha},
\end{equation}
where the final inequality used \eqref{lowerBdSizeTildeLP}. Thus by \eqref{eachZisAvg} and \eqref{controlOfPPrime} (and using the fact that $V\geq V^{c(\alpha)/\alpha}$ since $V\geq 1$ and $c(\alpha)/\alpha\leq1)$, we have
\begin{equation*}
\begin{split}
\#P_2 &\gtrapprox (\#\mathcal{Z})(\# P)\\
& \gtrapprox (\#\mathcal{Z}) M \tilde\delta^{-2\alpha-c(\alpha)}(\tilde\delta^{2\alpha}\delta^{-2\alpha}(\#\mathcal{Z})^{-1})V\\
& \gtrapprox M (\delta/s)^{-2\alpha-c(\alpha)}(s^{-2\alpha})V^{c(\alpha)/\alpha}\\
& \gtrapprox M \delta^{-2\alpha-c(\alpha)} s^{c(\alpha)}(s^\alpha M)^{-c(\alpha)/\alpha}\\
& \gtrapprox \delta^{-2\alpha-c(\alpha)} M^{1-c(\alpha)/\alpha}\\
& \geq \delta^{-2\alpha-c(\alpha)}.
\end{split}
\end{equation*}
Since $P_2\subset P_1$, we obtain \eqref{necessaryBdP1}.
\end{proof}

\appendix

\section{Proof of Proposition~\ref{modifiedGKZ}}\label{modifiedGKZAppendix}
In this section, we will sketch the proof of Proposition~\ref{modifiedGKZ}. As discussed in Section \ref{toolsFromAddComb}, this proof does not contain any new ideas; it is simply the proof of Theorem 1.1 from \cite{gkz}, except we keep track of the dependence on $K_0,\ldots,K_4$. We also preserve two distinct estimates that were merged too early in \cite{gkz}, which allows for a slightly stronger final bound. The proof sketch presented here is not self contained---the reader should compare with Theorem 1.1 from \cite{gkz}. 

To prove Proposition~\ref{modifiedGKZ}, it suffices to establish the estimate
\begin{equation}\label{modifiedPropmodifiedGKZ}
K_1^6 K_2^{6+\alpha} K_3^{2(9+4\alpha)} K_4^{4(2+\alpha)} \gtrapprox  \delta^{-\alpha(1-\alpha)}.
\end{equation}
To begin, we will record some facts about the set $A$ that were proved in \cite{gkz}. For comparison, the set $A$ in the statement of Proposition~\ref{modifiedGKZ} corresponds to the set $A'$ from \cite{gkz}. (In \cite{gkz}, $A'\subset A$ was selected so that $\mathcal{E}_{\rho}(A'+A')\leq K_3\mathcal{E}_{\rho}(A')$ for all $\delta\leq\rho\leq 1$; in Proposition~\ref{modifiedGKZ}, this property already holds for $A$.) 

The authors in \cite{gkz} use their assumed upper bound on $\mathcal{E}_{\delta}(A)$ to obtain the following lower bound on the multiplicative energy of $A$:
\begin{equation}\label{sumaAbA}
\sum_{a,b\in A}\#\big((a.A)_{\delta^+} \cap (b.A)_{\delta^+}\big)\gtrsim K_4^{-1}(\#A)^3.
\end{equation}
We claim that our hypothesis $\mathcal{E}_{\delta}^\times(A,A)\geq K_4^{-1}(\#A)^3$ implies the analogous statement, with $\delta$ replaced by $2\delta$. Indeed, for each $a,b\in A$, select $a',b'\in A_{\delta^\times}$ with $|a-a'|< 2\delta,\ |b-b'|<2\delta$. Then 
\[
\#\big((a.A)_{2\delta^+} \cap (b.A)_{2\delta^+}\big)\gtrsim \#\big( (a'. A_{\delta^\times})\cap (b'. A_{\delta^\times}).
\]
By \eqref{sumaAbA} (with $2\delta$ in place of $\delta$) and pigeonholing, we conclude that there exists $b\in A$ so that
\begin{equation}\label{analogueOf8}
\sum_{a\in A}\#\big((a.A)_{2\delta^+} \cap (b.A)_{2\delta^+}\big)\gtrsim K_4^{-1}(\#A)^2.
\end{equation}
\eqref{analogueOf8} is the analogue of (8) from \cite{gkz}. From here, the argument proceeds in essentially an identical fashion, except we track the dependence on $K_1,K_2,K_3,K_4$. We will briefly highlight a few of the key estimates from \cite{gkz}, and give the analogous statements. 

The authors in \cite{gkz} select a number $K_4^{-1}\leq\rho\leq 1$ and a set $\bar A\subset A$, with $\#\bar A\gtrsim|\log\delta|^{-1}(\#A)$, so that 
\[
\#\big( (aA)_{2\delta^+}\cap (bA)_{2\delta^+}\big)\sim (\# A)K_4^{-1}\rho^{-1}\quad\textrm{for each}\ a\in \bar A. 
\]
They then select a set $A_1\subset\bar A$ with $\#A_1\gtrapprox\#\bar A$, so that $A_1+A_1$ has small $\rho$-covering number for all $\delta\leq\rho \leq 1$. The key estimates we need are the following analogues of Lemmas 3.6 and 3.7 from \cite{gkz}. Let $a_1,a_2,b_2,b_2\in A_1$ and let $d_1=a_1-b_1,$ $d_2 = a_2-b_2$. Then
\begin{equation}\label{analogue12}
\mathcal{E}_{\delta}(d_1A+ d_2A)\lessapprox K_1K_2K_3^{8}K_4^4\rho^{4} \max(|d_1|,|d_2|)^{\alpha}\#(A),
\end{equation}
and for each $k\geq 2$ there is a set $A_2\subset A_1$, with $\#A_2\geq\frac{1}{4}\#A_1$, so that
\begin{equation}\label{analogue14}
|d_1A_2+ \underbrace{d_2A_2 +\ldots + d_2A_2}_{\text{$k$ times}}|\lessapprox K_1K_2K_3^{7+k}K_4^4\rho^{5-k} \max(|d_1|,|d_2|)^{\alpha}\#(A). 
\end{equation}

Next, let $s$ and $\gamma$ be parameters that we will specify below. Proceeding as in \cite{gkz}, we define
\[
B = \Big\{\frac{a_1-a_2}{a_3-a_4}\colon a_i\in A_1,\ |a_3-a_4|>\delta^\gamma\Big\}.
\]
Lemma 4.1 from \cite{gkz} says that either there is a point $b\in B\cap[0,1]$ so that both $b/2$ and $(b+1)/2$ have distance at least $s$ from $B$, or the $s$-covering number of $B\cap[0,1]$ is comparable to $s^{-1}$. The former situation is called the ``gap case,'' while the latter is called the ``dense case.''

Our arguments in both of these cases mirror those of \cite{gkz}, with \eqref{analogue12} and \eqref{analogue14} in place of Lemmas 3.6 and 3.7. The arguments from \cite{gkz} show that in the dense case, we must have either

\begin{equation}\label{densePossibility}
K_2K_3^8K_4^4 \gtrapprox \delta^{\alpha}s^{-1}\quad\textrm{and/or}\quad K_1^3K_2K_3^8K_4^4 \gtrapprox \delta^{-\gamma\alpha},
\end{equation}
while in the gap case we must have 
\begin{equation}\label{gapPossibility}
K_1^3K_2K_3^{10}K_4^4 \gtrapprox \left(\delta^{\gamma-1}s\right)^{\alpha}.
\end{equation}
Until this point, the arguments above have proceeded in parallel to those in \cite{gkz}. The improvement over \cite{gkz} comes from properly choosing $s$ and $\gamma$; we choose $s$ and $\gamma$ so that
\begin{equation}\label{defnSAndGamma}
s = \left(K_1^3K_2^2K_3 \delta^{3\alpha/2}\right)^{2/(\alpha+2)},\quad \delta^{\gamma} = K_3^{1/\alpha}\delta^{1/2}s^{-1/2}.
\end{equation}
We can suppose that $0<s<1$, since otherwise $K_1^3K_2^2K_3>\delta^{-3\alpha/2}$, which is a stronger estimate than \eqref{modifiedPropmodifiedGKZ} (recall that by assumption, $K_4\geq 1$). Similarly, we can suppose that $0<\delta^\gamma<1$ (and hence $\gamma>0$), since otherwise $K_3^{2/\alpha}\delta^{1-\alpha} \geq 1,$ and hence $K_3\geq \delta^{-\frac{\alpha(1-\alpha)}{2}},$ which again is a stronger estimate than \eqref{modifiedPropmodifiedGKZ} (recall that by assumption, $K_1,K_2,K_4\geq 1$ and $\alpha\in(0,1)$).

Since at least one of the three inequalities from \eqref{densePossibility} and \eqref{gapPossibility} must hold, we can verify that with this choice of $s$ and $\gamma$, \eqref{densePossibility} and \eqref{gapPossibility} imply  \eqref{modifiedPropmodifiedGKZ}.

\end{document}